\definecolor{Mycolor1}{HTML}{32B2B2}
\numberwithin{equation}{section}
\newcommand{\dif}{\textrm{\normalfont d}}
\newcommand{\re}{\textrm{\normalfont Re}\,}
\newcommand{\diag}{\textrm{\normalfont diag}\,}
\newcommand{\ran}{\textrm{\normalfont ran}\,}
\newcommand{\trace}{\textrm{\normalfont tr}\,}
\newcommand{\diver}{\textrm{\normalfont div}\,}
\DeclareMathOperator*{\esssup}{ess\,sup}
\newtheorem{theorem}{Theorem}[section]
\newtheorem{proposition}[theorem]{Proposition}
\newtheorem{lemma}[theorem]{Lemma}
\newtheorem{corollary}[theorem]{Corollary}
\theoremstyle{definition}
\newtheorem{definition}[theorem]{Definition}
\newtheorem{remark}[theorem]{Remark}
\newtheorem{example}[theorem]{Example}
\begin{document}
\begin{center}
ASYMPTOTIC LIMIT AND DECAY ESTIMATES FOR A CLASS OF\\
DISSIPATIVE LINEAR HYPERBOLIC SYSTEMS\\
IN SEVERAL DIMENSIONS

\vskip.25cm
%{\small\tt LpLqmultiD.tex} -- \today\vskip.2cm
\today\vskip.2cm
Thinh Tien NGUYEN\footnote{Department of Mathematics, Gran Sasso Science Institute -- Istituto Nazionale di Fisica Nucleare,
Viale Francesco Crispi, 7 - 67100 L'Aquila (ITALY),  \texttt{\tiny nguyen.tienthinh@gssi.infn.it}}
\end{center}
\vskip.5cm

\begin{quote}\footnotesize\baselineskip 12pt 
{\sf Abstract.}  
In this paper, we study the large-time behavior of solutions to a class of partially dissipative linear hyperbolic systems with applications in velocity-jump processes in several dimensions. Given integers $n,d\ge 1$, let $\mathbf A:=(A^1,\dots,A^d)\in (\mathbb R^{n\times n})^d$ be a matrix-vector, where $A^j\in\mathbb R^{n\times n}$, and let $B\in \mathbb R^{n\times n}$ be not required to be symmetric but have one single eigenvalue zero, we consider the Cauchy problem for linear $n\times n$ systems having the form
\begin{equation*}
	\partial_{t}u+\mathbf A\cdot \nabla_{\mathbf x} u+Bu=0,\qquad (\mathbf x,t)\in \mathbb R^d\times \mathbb R_+.
\end{equation*}
Under appropriate assumptions, we show that the solution $u$ is decomposed into $u=u^{(1)}+u^{(2)}$, where $u^{(1)}$ has the asymptotic profile which is the solution, denoted by $U$, of a parabolic equation and $u^{(1)}-U$ decays at the rate $t^{-\frac d2(\frac 1q-\frac 1p)-\frac 12}$ as $t\to +\infty$ in any $L^p$-norm, and $u^{(2)}$ decays exponentially in $L^2$-norm, provided $u(\cdot,0)\in L^q(\mathbb R^d)\cap L^2(\mathbb R^d)$ for $1\le q\le p\le \infty$. Moreover, $u^{(1)}-U$ decays at the optimal rate $t^{-\frac d2(\frac 1q-\frac 1p)-1}$ as $t\to +\infty$ if the system satisfies a symmetry property. The main proofs are based on asymptotic expansions of the solution $u$ in the frequency space and the Fourier analysis.
\vskip.15cm

\noindent
{\sf Keywords:} {Large-time behavior, Dissipative linear hyperbolic systems, Asymptotic expansions.}
\vskip.15cm

\noindent
{\sf MSC2010:} { 35L45, 35C20.}
\end{quote}

%\tableofcontents
\section{Introduction}
Consider the Cauchy problem for partially dissipative linear hyperbolic systems
\begin{equation}\label{eq:the dissipative hyperbolic equation}
\begin{cases}
	\partial_tu+\mathbf A\cdot \nabla_{\mathbf x}u+Bu=0,&(\mathbf x,t)\in \mathbb R^d\times \mathbb R_+,\\
	u(\mathbf x,0)=u_0(\mathbf x),
\end{cases}
\end{equation}
where $\mathbf A=(A^1,\dots,A^d)\in (\mathbb R^{n\times n})^d$ and $B\in\mathbb R^{n\times n}$, not required to be symmetric. The system \eqref{eq:the dissipative hyperbolic equation} can be regarded as discrete-velocity models where $\mathbf A$ determines the velocities of moving particles and $B$ gives the transition rates of the velocities after collisions among the particles in the system. For instance, this type of dissipative linear systems arises in the Goldstein--Kac model \cite{goldstein51,kac74} and the model of neurofilament transport in axons \cite{craciun05}.

The large-time behavior of the solution $u$ to \eqref{eq:the dissipative hyperbolic equation} in terms of decay estimates has been established for years. It follows from \cite{ueda12} that under appropriate assumptions on $\mathbf A$ and $B$, if $u$ is the solution to \eqref{eq:the dissipative hyperbolic equation} with the initial data $u_0\in L^1(\mathbb R^d)\cap L^2(\mathbb R^d)$, then one has
\begin{equation}\label{eq:ueda estimate}
	\|u\|_{L^2}\le C(1+t)^{-\frac d4}\|u_0\|_{L^1}+Ce^{-ct}\|u_0\|_{L^2},\qquad \forall t>0,
\end{equation}
for some positive constants $c$ and $C$. Moreover, the estimate \eqref{eq:ueda estimate} was generalized in \cite{bianchini07}, where $B$ can be written in the conservative-dissipative form $B=\diag(O,D)$ with $D$, a positive definite matrix not required to be symmetric. The authors in \cite{bianchini07} also showed that the conservative part $u^{(1)}$, describing the low dynamics of $u$ to the equilibrium manifold $\ker B$, satisfies the estimate
\begin{equation}\label{eq:bianchini estimate}
	\|u^{(1)}-U\|_{L^p}\le Ct^{-\frac d2(1-\frac 1p)-\frac 12}\|u_0\|_{L^1}, \qquad \forall p\ge \min\{d,2\}, t\ge 1,
\end{equation}
while the dissipative part $u^{(2)}$ of $u$ decays exponentially in $L^2(\mathbb R^d)$, where $U$ is the solution to a parabolic system given by applying the Chapman--Enskog expansion method. A more exact asymptotic parabolic-limit $U$ of $u$ was established for a class of the generalized Goldstein--Kac system in \cite{mascia16}, where the matrix $B$ is symmetric. The solution $u$ to \eqref{eq:the dissipative hyperbolic equation} then satisfies the estimate
\begin{equation}\label{eq:mascia estimate}
	\|u-U\|_{L^2}\le Ct^{-\frac d4-\frac 12}\|u_0\|_{L^1\cap L^2}, \qquad \forall t\ge 1,
\end{equation}
where $U$ is obtained based on an exhaustive analysis of the dispersion relation and on the application of a variant of the Kirchoff's matrix tree theorem from the graph theory. Recently, \cite{mascianguyen16} has optimized the above decay estimates for the solution $u$ to \eqref{eq:the dissipative hyperbolic equation} in one dimension $d=1$, with an explicit parabolic-limit $U$ and a corrector $V$, namely, one has
\begin{equation}\label{eq:mascianguyen estimate}
	\|u-U-V\|_{L^p}\le Ct^{-\frac 12(\frac 1q-\frac 1p)-\delta}\|u_0\|_{L^q},\qquad \forall 1\le q\le p\le \infty, t\ge 1,
\end{equation}
where $\delta \in \{1/2,1\}$, $U$ solving a parabolic system arising in the low-frequency analysis decays diffusively, and $V$ solving a hyperbolic system arising in the high-frequency analysis decays exponentially. The decay estimate \eqref{eq:mascianguyen estimate} is remarkable since it holds for general $p$ and $q$ ranging over $[1,\infty]$. Such kind of decay estimates is very well-known {\it e.g.} the $L^p$-$L^q$ decay estimate for the linear damped wave equation as in \cite{marcati03,nishihara03,hosono04,narazaki04}.

To obtain \eqref{eq:mascianguyen estimate} in one dimension $d=1$, one primarily considers the asymptotic expansions of the fundamental solution to the system \eqref{eq:the dissipative hyperbolic equation} in the Fourier space, divided into the low frequency, the intermediate frequency and the high frequency which naturally produce the time-asymptotic profile. Then, by an interpolation argument once the $L^\infty$-$L^1$ estimate and the $L^p$-$L^p$ estimate for $1\le p\le \infty$ are accomplished, one obtains the desired $L^p$-$L^q$ estimate for any $1\le q\le p\le \infty$. The same strategy will be applied to the system \eqref{eq:the dissipative hyperbolic equation} in several dimensions $d\ge 2$ in this paper. Nevertheless, difficulties occur as the dimension $d$ increases. For instance, as mentioned in \cite{bianchini07}, one cannot expect the estimate
\begin{equation}\label{eq:L1-L1}
	\|u\|_{L^1}\le C\|u_0\|_{L^1}
\end{equation}
hold in general since for large time, $L_0u$, where $L_0$ is the left eigenvector associated with the eigenvalue $0$ of $B$, behaves as the solution $\omega$ to the reduced system
\begin{equation}\label{eq:reduced system}
	\partial_t\omega+L_0\mathbf AR_0\cdot \nabla_{\mathbf x} \omega=0,\qquad (\mathbf x,t)\in\mathbb R^d\times \mathbb R_+,
\end{equation}
where $R_0$ is the right eigenvector associated with the eigenvalue $0$ of $B$, and thus, it is known in \cite{bressan03} that \eqref{eq:L1-L1} is not true in general. The estimate \eqref{eq:L1-L1} in fact depends strongly on a uniform parabolic operator. Nonetheless, this obstacle can be defeated if $d=1$ as in \cite{mascianguyen16} or if $0$ is a simple eigenvalue of $B$ since the system \eqref{eq:reduced system} then becomes scalar and it allows us to obtain \eqref{eq:L1-L1} as we will see in this paper. Another difficulty arises in the high-frequency analysis due to the loss of integrability and the fact that one cannot perform a uniform expansion of the fundamental solution as the dimension $d$ increases. Hence, the corrector $V$ as in \eqref{eq:mascianguyen estimate} cannot be obtained trivially.

The aim of this paper is to study the $L^p$-$L^q$ decay estimate for the conservative part $u^{(1)}$ of the solution $u$ to the system \eqref{eq:the dissipative hyperbolic equation} in several dimensions $d\ge 2$ for general $p$ and $q$ in $[1,\infty]$ in order to generalize \eqref{eq:ueda estimate}, \eqref{eq:bianchini estimate} and \eqref{eq:mascia estimate}, where $B$ is not required to be symmetric but has one single eigenvalue zero. The $L^p$-$L^q$ estimate as in \eqref{eq:mascianguyen estimate} for the multi-dimensional case $d\ge 2$ is still a challenge for the author.

\vskip.25cm
For $\mathbf x=(x_1,\dots,x_d)\in\mathbb C^d$, consider the $n\times n$ operators
\begin{equation}\label{eq:operators E and A}
	E(\mathbf x):=B+A(\mathbf x),\qquad A(\mathbf x):=\mathbf A\cdot \mathbf x=\sum_{j=1}^dA^jx_j,
\end{equation}
where $\mathbf A=(A^1,\dots,A^d)\in (\mathbb R^{n\times n})^d$ and $B\in \mathbb R^{n\times n}$. We start with the following reasonable assumptions.
\vskip.25cm
{Condition $\mathsf A$.} {\sf [Hyperbolicity]} {\it $A=A(\mathbf w)$ for $\mathbf w\in\mathbb S^{d-1}$ is uniformly diagonalizable with real linear eigenvalues {\it i.e.} there is an invertible matrix $R=R(\mathbf w)$ for $\mathbf w\in\mathbb S^{d-1}$ satisfying
\begin{equation*}
	\sup_{\mathbf w\in\mathbb S^{d-1}}|R(\mathbf w)||R^{-1}(\mathbf w)|<+\infty,
\end{equation*}
for any matrix norm, such that $R^{-1}AR$ is a diagonal matrix whose nonzero entries are real linear in $\mathbf w\in\mathbb S^{d-1}$.}

\vskip.25cm
{Condition $\mathsf R$.} {\sf [Diagonalizing matrix]} {\it There is a matrix $R$ uniformly diagonalizing $A$ such that $R^{-1}BR$ is a constant matrix.}

\vskip.25cm
{Condition $\mathsf B$.} {\sf [Partial dissipation]} {\it The spectrum of $B$ is decomposed into $\sigma(B)=\{0\}\cup \sigma_0$ where $0$ is simple and $\sigma_0\subseteq \{z\in \mathbb C:\re z>0\}$.}

\vskip.25cm
Moreover, the requisite condition for the decay of the solution $u$ to \eqref{eq:the dissipative hyperbolic equation}, strictly related to the Shizuta--Kawashima condition: {\it the eigenvectors of $A(\mathbf x)$ do not belong to the kernel of $B$ for any $\mathbf x\ne \mathbf 0$} (see \cite{kawashima84,shizuta85,umeda84} and therein), is given by
\vskip.25cm
{Condition $\mathsf D$.} {\sf [Uniform dissipation]} {\it There is a constant $\theta>0$ such that for any eigenvalue $\lambda=\lambda(i\mathbf k)$ of $E=E(i\mathbf k)$ in \eqref{eq:operators E and A} for $\mathbf k\in\mathbb R^d$, one has
\begin{equation*}
	\re\lambda(i\mathbf k)\ge \dfrac{\theta|\mathbf k|^2}{1+|\mathbf k|^2},\qquad \forall \mathbf k\ne \mathbf 0 \in \mathbb R^d.
\end{equation*}}

\begin{remark}[Relaxing the conditions $\mathsf A$ and $\mathsf R$]
The requirement of the linearity of the eigenvalues of the matrix $A$ satisfying the condition $\mathsf A$ and the existence of the matrix $R$ satisfying the condition $\mathsf R$ can be omitted by considering the dissipative structures proposed in \cite{ueda12,bianchini07}. Nonetheless, the structures in \cite{ueda12,bianchini07} require that the system \eqref{eq:the dissipative hyperbolic equation} is Friedrich symmetrizable while in our case, the matrix $A$ is only uniformly diagonalizable. The advantage of the linearity of the eigenvalues of the matrix $A$ and the existence of the matrix $R$ is that one can construct the high-frequency asymptotic expansion of $E$ in \eqref{eq:operators E and A} after subtracting a suitable Lebesgue measure zero set.
\end{remark}

We now construct the asymptotic parabolic-limit $U$ of the solution $u$ to \eqref{eq:the dissipative hyperbolic equation}. Let $\Gamma$ be an oriented closed curve in the resolvent set of $B$ such that it encloses zero except for the other eigenvalues of $B$. One sets
\begin{equation}\label{eq:coefficients P0 and S0 1}
	P_0^{(0)}:=-\dfrac{1}{2\pi i}\int_{\Gamma}(B-zI)^{-1}\,\dif z,\qquad Q_0^{(0)}:=\dfrac{1}{2\pi i}\int_{\Gamma}z^{-1}(B-zI)^{-1}\,\dif z,
\end{equation}
which are the eigenprojection and the reduced resolvent coefficient associated with the eigenvalue zero of $B$. We consider the Cauchy problem
\begin{equation}\label{eq:the parabolic equation}
\begin{cases}
	\partial_tU+\mathbf c\cdot \nabla_{\mathbf x}U-\diver(\mathbf D\nabla_{\mathbf x}U)=0,&(\mathbf x,t)\in \mathbb R^d\times \mathbb R_+,\\
	U(\mathbf x,0)=P_0^{(0)}u_0(\mathbf x),
\end{cases}
\end{equation}
where $\mathbf c=(c_h)\in \mathbb R^d$ and $\mathbf D=(D_{h\ell}) \in \mathbb R^{d\times d}$ is positive definite with scalar entries
\begin{equation}\label{eq:coefficients c and D 1}
	c_h:=\trace\bigl(A^h P_0^{(0)}\bigr),\qquad D_{h\ell}:=\dfrac{1}{2}\trace\bigl(A^{h}P_0^{(0)}A^\ell Q_0^{(0)}+A^h Q_0^{(0)}A^\ell P_0^{(0)}\bigr).
\end{equation}

\begin{theorem}[$L^p$-$L^q$ decay estimates]\label{theo:LpLq estimates 1}
Let $u$ be the solution to the Cauchy problem \eqref{eq:the dissipative hyperbolic equation} with the initial data $u_0\in L^q(\mathbb R^d)\cap L^2(\mathbb R^d)$ for $1\le q\le \infty$. Under the assumptions $\mathsf A$, $\mathsf R$, $\mathsf B$ and $\mathsf D$, the solution $u$ is decomposed into
\begin{equation}\label{eq:the decomposition of u}
	u(\mathbf x,t)=u^{(1)}(\mathbf x,t)+u^{(2)}(\mathbf x,t),
\end{equation}
where
\begin{equation*}
	u^{(1)}(\mathbf x,t):=\mathcal F^{-1}(e^{-E(i\mathbf k)t}P_0(i\mathbf k)\chi(\mathbf k))*u_0(\mathbf x)
\end{equation*}
and $u^{(2)}$ is the remainder, where $P_0$ is the eigenprojection associated with the eigenvalue of $E$ in \eqref{eq:operators E and A} converging to $0$ as $|\mathbf k|\to 0$ and $\chi$ is a cut-off function with support contained in the ball $B(\mathbf 0,\varepsilon)\subset \mathbb R^d$, valued in $[0,1]$, for small $\varepsilon>0$. Moreover, for any $1\le q\le p\le \infty$ and $t\ge 1$, one has
\begin{equation}\label{eq:decay estimates 1}
	\|u^{(1)}-U\|_{L^p}\le Ct^{-\frac d2(\frac 1q-\frac 1p)-\frac 12}\|u_0\|_{L^q},
\end{equation}
where $U$ is the solution to \eqref{eq:the parabolic equation} with the initial data $U_0\in L^q(\mathbb R^d)$, and one has
\begin{equation}\label{eq:decay estimates 2}
	\|u^{(2)}\|_{L^2}\le Ce^{-ct}\|u_0\|_{L^2}
\end{equation}
for some constant $c>0$ and for all $t\ge 1$.
\end{theorem}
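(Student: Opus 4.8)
The plan is to pass to the Fourier variable, where $\hat u(\mathbf k,t)=e^{-E(i\mathbf k)t}\hat u_0(\mathbf k)$ with $E(i\mathbf k)=B+iA(\mathbf k)$, and to exploit the spectral decomposition of $E(i\mathbf k)$ near $\mathbf k=\mathbf 0$. By Condition $\mathsf B$ and analytic perturbation theory, for $|\mathbf k|<\varepsilon$ there is a simple eigenvalue $\lambda(i\mathbf k)$ of $E(i\mathbf k)$ with $\lambda(\mathbf 0)=0$ and an analytic rank-one eigenprojection $P_0(i\mathbf k)$, $P_0(\mathbf 0)=P_0^{(0)}$; the first- and second-order eigenvalue perturbation formulas, evaluated with $P_0^{(0)}$ and the reduced resolvent $Q_0^{(0)}$, give $\lambda(i\mathbf k)=\phi(\mathbf k)+\psi(\mathbf k)$ with $\phi(\mathbf k):=i\mathbf c\cdot\mathbf k+\mathbf k^{T}\mathbf D\mathbf k$ the symbol of \eqref{eq:the parabolic equation} (reproducing exactly the coefficients \eqref{eq:coefficients c and D 1}, the quadratic term being real by symmetry of $P_0^{(0)},Q_0^{(0)}$) and $\psi(\mathbf k)=O(|\mathbf k|^{3})$; Condition $\mathsf D$ forces $\mathbf D$ positive definite. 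Since $\lambda(i\mathbf k)$ is simple, $e^{-E(i\mathbf k)t}P_0(i\mathbf k)=e^{-\lambda(i\mathbf k)t}P_0(i\mathbf k)$, hence $\widehat{u^{(1)}}=e^{-\lambda(i\mathbf k)t}P_0(i\mathbf k)\chi(\mathbf k)\hat u_0$ and $\widehat{u^{(2)}}=e^{-E(i\mathbf k)t}(I-P_0(i\mathbf k)\chi(\mathbf k))\hat u_0$.

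For the conservative part I would compare $\widehat{u^{(1)}}$ with $\widehat U=e^{-\phi(\mathbf k)t}P_0^{(0)}\hat u_0$ and write, with $G:=e^{-\lambda(i\mathbf k)t}P_0(i\mathbf k)\chi$ and $G_0:=e^{-\phi(\mathbf k)t}P_0^{(0)}\chi$,
\[
\widehat{u^{(1)}}-\widehat U=(G-G_0)\hat u_0-e^{-\phi(\mathbf k)t}P_0^{(0)}(1-\chi(\mathbf k))\hat u_0,
\]
so that $G-G_0=e^{-\phi(\mathbf k)t}\chi(\mathbf k)\bigl[(e^{-\psi(\mathbf k)t}-1)P_0(i\mathbf k)+(P_0(i\mathbf k)-P_0^{(0)})\bigr]$. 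The second term on the right has a symbol supported in $|\mathbf k|\ge\varepsilon$ with Gaussian factor $e^{-\mathbf k^{T}\mathbf D\mathbf k\,t}$, hence its kernel is $O(e^{-ct})$ in every $L^{r}$, $r\in[1,\infty]$, and is harmless for $t\ge1$. For $K_t:=\mathcal F^{-1}(G-G_0)$, choosing $\varepsilon$ small so that $e^{-(\re\phi)t}e^{|\re\psi|t}\le e^{-\frac12\mathbf k^{T}\mathbf D\mathbf k\,t}$ on $|\mathbf k|<\varepsilon$ gives $|G-G_0|\le Ce^{-c|\mathbf k|^{2}t}(|\mathbf k|^{3}t+|\mathbf k|)\mathbf 1_{\{|\mathbf k|<\varepsilon\}}$, so the rescaling $\mathbf k\mapsto\mathbf k/\sqrt t$ yields the $L^{1}$–$L^{\infty}$ bound $\|K_t\|_{L^{\infty}}\le\|G-G_0\|_{L^{1}_{\mathbf k}}\le Ct^{-\frac{d+1}{2}}$. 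For the $L^{1}$–$L^{1}$ bound, a translation by $\mathbf c t$ removes the purely imaginary part $i\mathbf c\cdot\mathbf k\,t$ of the exponent (an $L^{1}$-isometry) and $\|\mathcal F^{-1}(\cdot)\|_{L^{1}}$ is invariant under $\mathbf k\mapsto\mathbf k/\sqrt t$; after rescaling, the symbol and all its $\mathbf y$-derivatives up to order $s:=\lfloor d/2\rfloor+1$ are $\le Ct^{-1/2}(1+|\mathbf y|)^{N}e^{-c|\mathbf y|^{2}}$ — the gain $t^{-1/2}$ coming from the first-order vanishing of $P_0(i\mathbf k)-P_0^{(0)}$ and the third-order vanishing of $\psi$ (each $\mathbf y$-derivative of $e^{-\psi(\mathbf y/\sqrt t)t}$ costs only $t^{-1/2}$ since $\partial_{\mathbf k}\psi=O(|\mathbf k|^{2})$) — so that by Cauchy–Schwarz with the weight $\langle\mathbf y\rangle^{-s}\in L^{2}$ one obtains $\|K_t\|_{L^{1}}\le Ct^{-1/2}$. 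By convexity of $r\mapsto\log\|K_t\|_{L^{r}}$, $\|K_t\|_{L^{r}}\le\|K_t\|_{L^{1}}^{1/r}\|K_t\|_{L^{\infty}}^{1-1/r}\le Ct^{-\frac12-\frac d2(1-\frac1r)}$, and Young's inequality with $\frac1r=1+\frac1p-\frac1q$ gives \eqref{eq:decay estimates 1}.

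For the dissipative part, by Plancherel it suffices to prove $\|e^{-E(i\mathbf k)t}(I-P_0(i\mathbf k)\chi(\mathbf k))\|\le Ce^{-ct}$ uniformly in $\mathbf k$. On $|\mathbf k|<\varepsilon$, $I-P_0(i\mathbf k)$ projects onto the eigenspaces perturbing from $\sigma_0\subset\{\re z>0\}$ and the bound follows from continuity of the spectral projections on a compact set; on $\varepsilon\le|\mathbf k|\le M$ it follows from Condition $\mathsf D$ (which gives $\re\sigma(E(i\mathbf k))\ge\theta\varepsilon^{2}/(1+\varepsilon^{2})>0$) together with a resolvent-contour representation, uniform by compactness. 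On $|\mathbf k|>M$ I would use Conditions $\mathsf A$, $\mathsf R$ to write $e^{-E(i\mathbf k)t}=R(\mathbf w)e^{-(B_0+i|\mathbf k|\Lambda(\mathbf w))t}R(\mathbf w)^{-1}$ with $B_0=R^{-1}BR$ constant, $\Lambda(\mathbf w)$ real diagonal and $\sup_{\mathbf w}|R||R^{-1}|<\infty$; since $i|\mathbf k|\Lambda(\mathbf w)$ is skew-Hermitian and Condition $\mathsf D$ forces $\re\sigma$ of the restriction of $B_0$ to each block where $\Lambda(\mathbf w)$ is constant to be $\ge\theta$, a uniform-in-$(\mathbf w,|\mathbf k|)$ resolvent estimate on the half-plane $\{\re z\ge-\theta/2\}$ holds, yielding the uniform exponential decay (cf. \cite{ueda12,bianchini07}); combining the three regimes gives \eqref{eq:decay estimates 2}.

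The hard part will be, on the one hand, this uniform high-frequency semigroup bound: because $E(i\mathbf k)$ is not normal one cannot read decay off the spectrum alone, and one cannot perform a single asymptotic expansion valid for all directions $\mathbf w$ as $d$ grows (the obstruction noted in the introduction), so the argument must go through the uniform diagonalization of Conditions $\mathsf A$, $\mathsf R$ and a uniform resolvent bound rather than a spectral expansion; on the other hand, the sharp $t^{-1/2}$ gain for $\|K_t\|_{L^{1}}$ — needed for the $L^{p}$–$L^{p}$ endpoint and tied to the simplicity of the zero eigenvalue of $B$, which reduces \eqref{eq:reduced system} to a scalar equation and makes \eqref{eq:L1-L1} available — is delicate, since $\mathbf k$-differentiation of $e^{-\phi t}$ a priori costs powers of $t$; the remedy is the $\mathbf k\mapsto\mathbf k/\sqrt t$ rescaling combined with the structural vanishing of the eigenvalue and eigenprojection correctors, which exactly balances those factors.
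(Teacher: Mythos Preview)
Your proposal is correct and follows essentially the same architecture as the paper: Fourier splitting into low/intermediate/high frequency, the same spectral expansion of $E(i\mathbf k)$ near $\mathbf k=\mathbf 0$ producing $\lambda(i\mathbf k)=\phi(\mathbf k)+O(|\mathbf k|^3)$ and $P_0(i\mathbf k)=P_0^{(0)}+O(|\mathbf k|)$, the same $I$--$J$ splitting of $G-G_0$, and the $L^\infty$--$L^1$ endpoint obtained identically.

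The only substantive difference is packaging. For the $L^p$--$L^p$ endpoint you translate by $\mathbf c t$, rescale $\mathbf k\mapsto\mathbf k/\sqrt t$, then bound $\|K_t\|_{L^1}$ by Cauchy--Schwarz with the weight $\langle\mathbf x\rangle^{-s}$, $s=\lfloor d/2\rfloor+1$; the paper instead estimates $\|\partial^\alpha(G-G_0)\|_{L^2}$ directly in the original variable (Lemma~\ref{lem:derivatives estimate}) and invokes the Carlson--Beurling inequality (Lemma~\ref{lem:Carlson-Beurling}). These are the same argument in different coordinates: Carlson--Beurling \emph{is} the weighted Cauchy--Schwarz bound you use, and your rescaling simply pre-balances the powers of $t$ that the paper tracks combinatorially through the partition bookkeeping of Lemma~\ref{lem:derivatives}. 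Your route is a bit slicker; the paper's is more explicit about which derivative hits which factor.

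For the high-frequency $L^2$ bound you sketch a uniform resolvent estimate on $\{\re z\ge-\theta/2\}$ after the $R(\mathbf w)$-conjugation. The paper instead carries out an explicit asymptotic expansion (Proposition~4.3): it removes a Lebesgue-null set of directions where eigenvalues of $A(\mathbf w)$ coalesce (Lemma~4.2), obtains constant block projections $\Pi_j^{(0)}$ on the complement, and reads off $\re\beta_{jm}\ge\theta$ from Condition~$\mathsf D$. Your resolvent claim is morally equivalent but would need exactly this structural input to be made uniform in $\mathbf w$, since the block decomposition of $\Lambda(\mathbf w)$ changes on that null set; since only an a.e.\ pointwise bound is needed for $L^2$, the paper's measure-zero subtraction is the clean way to close it.
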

\begin{remark}[Finite speed of propagation]
In the case where the solution $u$ to the system \eqref{eq:the dissipative hyperbolic equation} has finite speed of propagation, since the fundamental solution associated with $u$ has compact support contained in the wave cone $\{(\mathbf x,t)\in\mathbb R^d\times \mathbb R:|\mathbf x/t|\le C\}$ for some constant $C>0$, one can decompose $u$ into $u=u^{(1)}+u^{(2)}$, where
\begin{equation*}
	u^{(1)}(\mathbf x,t):=\mathcal F^{-1}(e^{-E(i\mathbf k)t}\chi(\mathbf k))*u_0(\mathbf x),
\end{equation*}
and $u^{(2)}$ is the remainder, where $\chi$ is a cut-off function with support contained in the ball $B(\mathbf 0,\rho)\subset \mathbb R^d$, valued in $[0,1]$, for any $\rho>0$, and the estimates \eqref{eq:decay estimates 1} and \eqref{eq:decay estimates 2} still hold for $t\ge 1$. This fact will be proved in the subsequent sections. For instance, it is the case where the system \eqref{eq:the dissipative hyperbolic equation} is Friedrich symmetrizable. Nonetheless, in one dimension $d=1$, the case $|\mathbf x/t|>C$ can be treated since the Cauchy integral theorem holds for the whole complex plane, and thus, one can use the estimates for the asymptotic expansion of the fundamental solution in the high frequency after changing paths of integrals of holomorphic functions (see \cite{mascianguyen16}).
\end{remark}

Moreover, consider the one-dimensional $2\times 2$ linear Goldstein--Kac system
\begin{equation*}
	\begin{cases}
		\partial_tu_1-\partial_xu_1=-\frac 12 u_1+\frac 12 u_2,\\
		\partial_tu_2+\partial_xu_2=\frac 12 u_1-\frac 12 u_2,
	\end{cases}\quad (x,t)\in\mathbb R\times \mathbb R_+.
\end{equation*}
It can be checked easily that $w:=u_1+u_2$ satisfies the linear damped wave equation
\begin{equation*}
	\begin{cases}
		\partial_{tt}^2w-\partial_{xx}^2w+\partial_tw=0,&(x,t)\in\mathbb R\times \mathbb R_+,\\
		w(x,0)=w_0(x),\\
		\partial_tw(x,0)=w_1(x),
	\end{cases}
\end{equation*}
where $w_0$ and $w_1$ are appropriate initial data. It then follows from \cite{marcati03} that
\begin{equation}\label{eq:LpLq estimates in marcati03}
	\Bigl\|w-\phi-e^{-\frac t2}\dfrac{w_0(x+t)+w_0(x-t)}{2}\Bigr\|_{L^p}\le Ct^{-\frac 12(\frac 1q-\frac 1p)-1}\|(w_0,w_1)\|_{L^q},
\end{equation}
for any $1\le q\le p\le \infty$ and $t\ge 1$, where $\phi$ is the solution to the heat equation
\begin{equation*}
	\begin{cases}
		\partial_t\phi-\partial_{xx}^2\phi=0,&(x,t)\in \mathbb R\times \mathbb R_+,\\
		\phi(x,0)=w_0(x)+w_1(x).
	\end{cases}
\end{equation*}
Without regarding the exponentially decaying term in \eqref{eq:LpLq estimates in marcati03}, there is a difference of a quantity of $1/2$ between the decay rates \eqref{eq:LpLq estimates in marcati03} and \eqref{eq:decay estimates 1}. The difference can be explained by a symmetry property that the one-dimensional $2\times 2$ linear Goldstein--Kac system possesses. Such kind of symmetry properties is already studied in \cite{mascianguyen16} based on the existence of an invertible matrix $S$ commuting with $B$ and anti-commuting with the matrix $\mathbf A=A$ of one-dimensional dissipative linear hyperbolic systems. More general, in several dimensions $d\ge 2$, the symmetry property is given by
\vskip.25cm
{Condition $\mathsf S$.} {\sf [Symmetry]} {\it There is an invertible matrix $S=S(\mathbf w)$ for $\mathbf w\in\mathbb S^{d-1}$ such that
\begin{equation*}
	SB=BS,\qquad SA=-AS,
\end{equation*}}
where $A=A(\mathbf w)$ is given by \eqref{eq:operators E and A} for $\mathbf w\in\mathbb S^{d-1}$.

\begin{remark}
In practice, it is easy to check the condition $\mathsf S$ if there is a constant invertible matrix $S$ satisfying $SB=BS$ and $SA^j=-A^jS$ for all $j\in\{1,\dots,d\}$ since $A(\mathbf w)=\sum_{j=1}^dA^j w_j$ by definition.
\end{remark}
\vskip.25cm
We will show that under the conditions $\mathsf B$, $\mathsf D$ and $\mathsf S$, the decay rate in the estimate \eqref{eq:decay estimates 1} increases. We primarily refine the asymptotic profile $U$.

With the coefficients $P_0^{(0)}, \,Q_0^{(0)}$ in \eqref{eq:coefficients P0 and S0 1} and $\mathbf D$ in \eqref{eq:coefficients c and D 1}, we consider the Cauchy problem
\begin{equation}\label{eq:the parabolic equation 2}
\begin{cases}
	\partial_tU-\diver(\mathbf D\nabla_{\mathbf x}U)=0,&(\mathbf x,t)\in \mathbb R^d\times \mathbb R_+,\\
	U(\mathbf x,0)=P_0^{(0)}u_0(\mathbf x)+\mathbf P_0^{(1)}\cdot \nabla_{\mathbf x}u_0(\mathbf x),
\end{cases}
\end{equation}
where $\mathbf P_0^{(1)}=(P_{0h}^{(1)})\in(\mathbb R^{n\times n})^d$ with matrix entries
\begin{equation}\label{eq:coefficient P01 of the total projection P0 1}
	P_{0h}^{(1)}:=-P_0^{(0)}A^hQ_0^{(0)}-Q_0^{(0)}A^hP_0^{(0)}.
\end{equation}
\begin{theorem}[Optimal decay rate]\label{theo:LpLq estimates 2}
Under the same hypotheses of Theorem \ref{theo:LpLq estimates 1}, if the condition $\mathsf S$ holds in addition, the solution $u$ is also decomposed into $u=u^{(1)}+u^{(2)}$ as in \eqref{eq:the decomposition of u} such that
 for any $1\le q\le p\le \infty$ and $t\ge 1$, one has
\begin{equation}\label{eq:decay estimates increased}
	\|u^{(1)}-U\|_{L^p}\le Ct^{-\frac d2(\frac 1q-\frac 1p)-1}\|u_0\|_{L^q},
\end{equation}
where $U$ is the solution to \eqref{eq:the parabolic equation 2} with the initial data $U_0$.
\end{theorem}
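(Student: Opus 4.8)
The plan is to run the argument that proves Theorem~\ref{theo:LpLq estimates 1} again, with the sole modification that the first–order parabolic profile \eqref{eq:the parabolic equation} is replaced by the driftless, gradient–corrected profile \eqref{eq:the parabolic equation 2}; the one genuinely new ingredient is the spectral parity forced by Condition~$\mathsf S$. First I would record that, since $A(\mathbf w)$ is homogeneous of degree one in $\mathbf w$ and $S=S(\mathbf w)$ satisfies $SB=BS$ and $SA(\mathbf w)=-A(\mathbf w)S$, one has, for $\mathbf k\neq\mathbf 0$ with $\mathbf w=\mathbf k/|\mathbf k|$,
\[
	S(\mathbf w)\,E(i\mathbf k)\,S(\mathbf w)^{-1}=B-iA(\mathbf k)=E(-i\mathbf k),
\]
so $E(i\mathbf k)$ and $E(-i\mathbf k)$ are similar and hence have the same spectrum. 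Consequently the simple eigenvalue branch $\lambda(i\mathbf k)$ of $E(i\mathbf k)$ tending to $0$ (analytic near $\mathbf 0$ by Condition~$\mathsf B$) satisfies $\lambda(i\mathbf k)=\lambda(-i\mathbf k)$, i.e. $\mathbf k\mapsto\lambda(i\mathbf k)$ is even, so its degree–one and degree–three Taylor coefficients at $\mathbf 0$ vanish. Since the degree–one and degree–two coefficients are $i\mathbf c\cdot\mathbf k$ and $\mathbf k^{\!\top}\mathbf D\mathbf k$ with $\mathbf c,\mathbf D$ as in \eqref{eq:coefficients c and D 1} (the low–frequency expansion that already enters the proof of Theorem~\ref{theo:LpLq estimates 1}), this forces $\mathbf c=\mathbf 0$ — explaining the absence of a transport term in \eqref{eq:the parabolic equation 2} — and
\[
	\lambda(i\mathbf k)=\mathbf k^{\!\top}\mathbf D\mathbf k+r(\mathbf k),\qquad |r(\mathbf k)|\le C|\mathbf k|^4\quad\text{for }|\mathbf k|\text{ small},
\]
upgrading the $O(|\mathbf k|^3)$ remainder available in general to $O(|\mathbf k|^4)$.

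Next I would set up the frequency comparison. By Condition~$\mathsf B$, for $\varepsilon$ small $P_0(i\mathbf k)$ is the spectral projection onto the eigenspace of the isolated simple eigenvalue $\lambda(i\mathbf k)$, so $e^{-E(i\mathbf k)t}P_0(i\mathbf k)=e^{-\lambda(i\mathbf k)t}P_0(i\mathbf k)$ on $\mathrm{supp}\,\chi$, while the Fourier transform of the solution to \eqref{eq:the parabolic equation 2} is $\widehat U(\mathbf k,t)=e^{-(\mathbf k^{\!\top}\mathbf D\mathbf k)t}\bigl(P_0^{(0)}+i\,\mathbf P_0^{(1)}\!\cdot\mathbf k\bigr)\widehat u_0(\mathbf k)$ by \eqref{eq:coefficient P01 of the total projection P0 1}. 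Hence
\[
	\widehat{u^{(1)}-U}(\mathbf k,t)=\Bigl[e^{-\lambda(i\mathbf k)t}P_0(i\mathbf k)\chi(\mathbf k)-e^{-(\mathbf k^{\!\top}\mathbf D\mathbf k)t}\bigl(P_0^{(0)}+i\,\mathbf P_0^{(1)}\!\cdot\mathbf k\bigr)\Bigr]\widehat u_0(\mathbf k),
\]
and I would split $\mathbb R^d$ into $\{|\mathbf k|\ge\varepsilon_0\}$ and $\{|\mathbf k|<\varepsilon_0\}$, with $\varepsilon_0$ small and $\chi\equiv1$ on $B(\mathbf 0,\varepsilon_0)$. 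On $\{|\mathbf k|\ge\varepsilon_0\}$ both terms are exponentially small: $\re\lambda(i\mathbf k)\gtrsim1$ on $\mathrm{supp}\,\chi\cap\{|\mathbf k|\ge\varepsilon_0\}$ by Condition~$\mathsf D$, and $e^{-(\mathbf k^{\!\top}\mathbf D\mathbf k)t}\le e^{-c\varepsilon_0^2t/2}e^{-c|\mathbf k|^2t/2}$ since $\mathbf D$ is positive definite, so — exactly as in the proof of Theorem~\ref{theo:LpLq estimates 1} — these frequencies contribute at most $Ce^{-ct}\|u_0\|_{L^q}$ in every $L^p$ with $p\ge q$, for $t\ge1$.

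For the low frequencies $|\mathbf k|<\varepsilon_0$, where $\chi\equiv1$, I would factor out the Gaussian:
\begin{multline*}
e^{-\lambda(i\mathbf k)t}P_0(i\mathbf k)-e^{-(\mathbf k^{\!\top}\mathbf D\mathbf k)t}\bigl(P_0^{(0)}+i\,\mathbf P_0^{(1)}\!\cdot\mathbf k\bigr)\\
=e^{-(\mathbf k^{\!\top}\mathbf D\mathbf k)t}\Bigl[(e^{-r(\mathbf k)t}-1)P_0(i\mathbf k)+\bigl(P_0(i\mathbf k)-P_0^{(0)}-i\,\mathbf P_0^{(1)}\!\cdot\mathbf k\bigr)\Bigr].
\end{multline*}
Since $P_0$ is analytic near $\mathbf 0$ with $P_0(i\mathbf k)=P_0^{(0)}+i\,\mathbf P_0^{(1)}\!\cdot\mathbf k+O(|\mathbf k|^2)$ — the Kato expansion already used for Theorem~\ref{theo:LpLq estimates 1} — the second bracket is $O(|\mathbf k|^2)$ and $|P_0(i\mathbf k)|\le C$; from $|r(\mathbf k)|\le C|\mathbf k|^4$ one gets $|e^{-r(\mathbf k)t}-1|\le C|\mathbf k|^4t\,e^{C|\mathbf k|^4t}$, and after shrinking $\varepsilon_0$ the factor $e^{C|\mathbf k|^4t}$ is absorbed into $e^{-(\mathbf k^{\!\top}\mathbf D\mathbf k)t}\le e^{-c|\mathbf k|^2t}$. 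Hence, for $t\ge1$, the multiplier defining the low–frequency part of $u^{(1)}-U$ is dominated by
\[
	C\bigl(|\mathbf k|^2+|\mathbf k|^4t\bigr)e^{-c|\mathbf k|^2t}\chi(\mathbf k)\le C'\,|\mathbf k|^2e^{-c|\mathbf k|^2t/2}\chi(\mathbf k),
\]
using $|\mathbf k|^4t\,e^{-c|\mathbf k|^2t}\le C|\mathbf k|^2e^{-c|\mathbf k|^2t/2}$. Since $\chi$ is smooth with compact support, the scaling $\mathbf k=\mathbf\ell/\sqrt t$ gives $\mathcal F^{-1}\bigl(|\mathbf k|^2e^{-c|\mathbf k|^2t/2}\bigr)=t^{-\frac d2-1}\Phi(\cdot/\sqrt t)$ with $\Phi$ a fixed Schwartz function, so writing $\mathcal F^{-1}\bigl(|\mathbf k|^2e^{-c|\mathbf k|^2t/2}\chi\bigr)=\mathcal F^{-1}\bigl(|\mathbf k|^2e^{-c|\mathbf k|^2t/2}\bigr)*\mathcal F^{-1}\chi$ with $\mathcal F^{-1}\chi\in L^1$ yields $\bigl\|\mathcal F^{-1}\bigl(|\mathbf k|^2e^{-c|\mathbf k|^2t/2}\chi\bigr)\bigr\|_{L^r}\le Ct^{-\frac d2(1-\frac1r)-1}$ for $1\le r\le\infty$. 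Young's inequality with $1+\frac1p=\frac1r+\frac1q$ then gives $\|u^{(1)}-U\|_{L^p}\le Ct^{-\frac d2(\frac1q-\frac1p)-1}\|u_0\|_{L^q}$ for all $1\le q\le p\le\infty$, $t\ge1$, which is \eqref{eq:decay estimates increased}; the decomposition $u=u^{(1)}+u^{(2)}$ and the exponential $L^2$ bound on $u^{(2)}$ are inherited unchanged from Theorem~\ref{theo:LpLq estimates 1}.

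The step I expect to be the main obstacle is pinning down the exact order of vanishing in the low frequencies: checking carefully that Condition~$\mathsf S$ really yields $E(-i\mathbf k)=S(\mathbf w)E(i\mathbf k)S(\mathbf w)^{-1}$ and hence the parity of $\lambda(i\mathbf k)$ (so that $r(\mathbf k)=O(|\mathbf k|^4)$, not merely $O(|\mathbf k|^3)$), and then absorbing the remainder into the Gaussian \emph{uniformly in $t$}, in particular in the borderline regime $|\mathbf k|^4t\gtrsim1$. Once the order of vanishing and this absorption are secured, the rest is a one–order–further repetition of the Fourier–multiplier analysis already carried out for Theorem~\ref{theo:LpLq estimates 1}.
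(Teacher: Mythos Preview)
Your parity argument via $S(\mathbf w)E(i\mathbf k)S(\mathbf w)^{-1}=E(-i\mathbf k)$ is correct and is exactly the content of Corollary~\ref{prop:symmetry} and the last line of Proposition~\ref{prop:low-frequency}: under Condition~$\mathsf S$ one has $\lambda_0(i\mathbf k)=\mathbf k^{\top}\mathbf D\mathbf k+O(|\mathbf k|^4)$, and the paper's proof of Theorem~\ref{theo:LpLq estimates 2} is indeed nothing more than the replacement $\hat\Phi_t\mapsto\hat\Psi_t$ in the machinery already built for Theorem~\ref{theo:LpLq estimates 1}.

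However, there is a genuine gap in your low–frequency step. You bound the matrix multiplier
\[
m(\mathbf k,t)=e^{-\lambda_0(i\mathbf k)t}P_0(i\mathbf k)\chi(\mathbf k)-e^{-\mathbf k^{\top}\mathbf D\mathbf k t}\bigl(P_0^{(0)}+i\mathbf P_0^{(1)}\!\cdot\mathbf k\bigr)
\]
pointwise by the scalar $g(\mathbf k,t)=C|\mathbf k|^2e^{-c|\mathbf k|^2t/2}\chi(\mathbf k)$, then compute $\|\mathcal F^{-1}g\|_{L^r}$ by scaling, and finally apply Young. But Young requires $\|\mathcal F^{-1}m\|_{L^r}$, not $\|\mathcal F^{-1}g\|_{L^r}$, and the implication $|m|\le g\Rightarrow\|\mathcal F^{-1}m\|_{L^r}\le C\|\mathcal F^{-1}g\|_{L^r}$ is false for $r<2$ (it holds for $r=\infty$ via $\|m\|_{L^1}$ and for $r=2$ via Plancherel, but nothing forces it at $r=1$). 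Consequently your argument delivers the $L^\infty$--$L^1$ estimate and, via $\|m\|_{L^\infty}\le Ct^{-1}$, the $L^2$--$L^2$ estimate, but not $L^p$--$L^p$ for $p\ne 2$; interpolating only those two endpoints does \emph{not} reach the full triangle $1\le q\le p\le\infty$ claimed in \eqref{eq:decay estimates increased}.

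This is precisely why the paper goes through the Carlson--Beurling inequality (Lemma~\ref{lem:Carlson-Beurling}): to obtain $\|m\|_{M_p}\le Ct^{-1}$ for \emph{every} $1\le p\le\infty$ one must control $\|\partial^\alpha m\|_{L^2}$ for $|\alpha|$ up to some $s>d/2$, not merely $|m|$. That derivative control is the content of the lengthy computation in the proof of Proposition~\ref{prop:low-frequency estimates} (equations \eqref{eq:derivatives of I}--\eqref{eq:L2 bound of J(2)} and Lemma~\ref{lem:derivatives estimate}), and under Condition~$\mathsf S$ it is redone with the decomposition \eqref{eq:decomposition of solution for low frequency 1 into I and J symmetry}--\eqref{eq:components I and J for low frequency symmetry}, yielding $\|\partial^\alpha m\|_{L^2}\le C(1+t)^{|\alpha|/2-d/4-1}$ and hence $\|m\|_{M_p}\le C(1+t)^{-1}$. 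Once you have this $L^p$--$L^p$ bound, interpolation with your $L^\infty$--$L^1$ estimate gives the full range. So: keep your parity step, keep your pointwise bound for the $L^\infty$--$L^1$ endpoint, but replace the scaling/Young shortcut by derivative estimates of $m$ feeding into Lemma~\ref{lem:Carlson-Beurling}.
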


The paper is organized as follows. Section \ref{sec:Proofs of the main theorems} is devoted to proofs and examples of Theorem \ref{theo:LpLq estimates 1} and Theorem \ref{theo:LpLq estimates 2}, where the proofs are based on the estimates obtained in Section \ref{sec:Decay estimates}. In order to prove these estimates in Section \ref{sec:Decay estimates}, we primarily invoke some useful tools of the Fourier analysis and the perturbation analysis in Section \ref{sec:Preliminaries}. With these tools, we construct the asymptotic expansions of the operator $E$ in \eqref{eq:operators E and A} in Section \ref{sec:Asymptotic expansions} in order to obtain the asymptotic expansions of the fundamental solution to the system \eqref{eq:the dissipative hyperbolic equation} to be able to prove the estimates in Section \ref{sec:Decay estimates}.

%%%%%%%%%%%%%%%NOTATIONS AND DEFINITIONS%%%%%%%%%%%
\subsection*{Notations and Definitions}
We introduce here the notations and definitions which will be used frequently through out this paper. See \cite{chemin11,bergh12} for more details.
\begin{definition}
Let $u$ be a function from $\mathbb R^d$ to a Banach space equipped with norm $|\cdot|$, we define the Lebesgue spaces $L^p(\mathbb R^d)$ for $1\le p\le \infty$ consisting of functions $u$ satisfying
\begin{equation*}
	\|u\|_{L^p}:=\Bigl(\int_{\mathbb R^d}|u(\mathbf x)|^p\,\dif \mathbf x\Bigr)^{1/p}<+\infty,\qquad 1\le p<\infty,
\end{equation*}
and satisfying
\begin{equation*}
	\|u\|_{L^\infty}:=\esssup_{\mathbb R^d} |u(\mathbf x)|<+\infty.
\end{equation*}
\end{definition}

Let $\alpha\in\mathbb N^d$ be the multi-index $\alpha:=(\alpha_1,\dots,\alpha_d)$ with $\alpha_j\in\mathbb N$. One denotes by
\begin{equation*}
	\partial^\alpha f:=\dfrac{\partial^{|\alpha|}f}{\partial x_1^{\alpha_1}\dots\partial x_d^{\alpha_d}},
\end{equation*}
where $|\alpha|:=\alpha_1+\dots+\alpha_d$, the partial derivatives of a smooth function $f$ on $\mathbb R^d$. Then, for smooths functions $f$ and $g$ on $\mathbb R^d$, we have the Leibniz rule
\begin{equation*}
	\partial^\alpha(fg)=\sum_{\nu\le \alpha}\begin{pmatrix} \alpha \\ \nu \end{pmatrix}\partial^\nu f\partial^{\alpha-\nu}g,
\end{equation*}
where
\begin{equation*}
	\begin{pmatrix} \alpha \\ \nu\end{pmatrix}:=\dfrac{\alpha!}{\nu!(\alpha-\nu)!}=\dfrac{\alpha_1!\dots \alpha_d!}{\nu_1!\dots \nu_d!(\alpha_1-\nu_1)!\dots (\alpha_d-\nu_d)!}
\end{equation*}
is the multi-index binomial coefficient, $\nu \le \alpha$ means that $\nu_j\le \alpha_j$ for all $j\in\{1,\dots,d\}$ and the difference $\alpha-\nu$ is defined by
\begin{equation*}
	\alpha-\nu:=(\alpha_1-\nu_1,\dots,\alpha_d-\nu_d).
\end{equation*}
\begin{definition}
The Schwartz space $\mathcal S(\mathbb R^d)$ is the set of smooth functions $u$ on $\mathbb R^d$ such that for any $k\in\mathbb N$, we have
\begin{equation*}
	\|u\|_{k,\mathcal S}:=\sup_{|\alpha|\le k,\, \mathbf x\in\mathbb R^d}(1+|\mathbf x|)^k|\partial^\alpha u(\mathbf x)|<+\infty.
\end{equation*}
\end{definition}
One denotes by $\mathcal S'(\mathbb R^d)$ the dual space of $\mathcal S(\mathbb R^d)$ and $u\in\mathcal S'(\mathbb R^d)$ is called a tempered distribution. For $u\in\mathcal S$, the Fourier transform $\hat u(\mathbf k)=\mathcal F(u(\mathbf x))$ is defined by
\begin{equation*}
	\hat u(\mathbf k):=\int_{\mathbb R^d}e^{-i\mathbf x\cdot \mathbf k}u(\mathbf x)\,\dif \mathbf x,
\end{equation*}
where $\mathbf x\cdot \mathbf k$ is the usual scalar product on $\mathbb R^d$, and the inverse Fourier transform of $\hat u$ also denoted by $u(\mathbf x)=\mathcal F^{-1}(\hat u(\mathbf k))$ is given by
\begin{equation*}
	u(\mathbf x):=\dfrac{1}{(2\pi)^d}\int_{\mathbb R^d}e^{i\mathbf x\cdot \mathbf k}\hat u(\mathbf k)\,\dif \mathbf k.
\end{equation*}
On the other hand, we can define the Fourier transform of tempered distributions $u\in\mathcal S'(\mathbb R^d)$ by the inner product $\langle \cdot,\cdot \rangle_{L^2} $ on $L^2(\mathbb R^d)$, namely
\begin{equation*}
	\langle \hat u(\mathbf k), \phi(\mathbf k) \rangle_{L^2}=\langle u (\mathbf x), \hat \phi(\mathbf x)\rangle_{L^2},\qquad \forall \phi\in\mathcal S(\mathbb R^d).
\end{equation*}
\begin{definition}
Let $s\in\mathbb R$, the Sobolev space $H^s(\mathbb R^d)$ consists of tempered distributions $u$ such that $\hat u\in L^{2}_{\textrm{loc}}(\mathbb R^d)$ and
\begin{equation*}
	\|u\|_{H^s}:=\Bigl(\int_{\mathbb R^d}(1+|\mathbf k|^2)^s|\hat u(\mathbf k)|^2\,\dif \mathbf k\Bigr)^{1/2}<+\infty.
\end{equation*}
\end{definition}
\begin{definition}
Let $\rho\in \mathcal S'(\mathbb R^d)$, $\rho$ is called a Fourier multiplier on $L^p(\mathbb R^d)$ for $1\le p\le \infty$ if the convolution $\mathcal F^{-1}(\rho(\mathbf k))*\phi\in L^p(\mathbb R^d)$ for all $\phi \in\mathcal S(\mathbb R^d)$ and if
\begin{equation*}
	\|\rho\|_{M_p}:=\sup_{\|\phi\|_{L^p}=1}\|\mathcal F^{-1}(\rho(\mathbf k))*\phi\|_{L^p}<+\infty.
\end{equation*}
The linear space of all such $\rho$ is denoted by $M_p(\mathbb R^d)$ equipped with norm $\|\cdot \|_{M_p}$.
\end{definition}

%%%%%%%%%%%%%%%PROOFS OF MAIN RESULTS%%%%%%%%%%%%
\section{Proofs and Examples of Theorem \ref{theo:LpLq estimates 1} and Theorem \ref{theo:LpLq estimates 2}}\label{sec:Proofs of the main theorems}
For $\mathbf k\in\mathbb R^d$, let $E=E(i\mathbf k)\in\mathbb R^{n\times n}$ be in \eqref{eq:operators E and A}. Let $\mathbf c\in\mathbb R^d$ and $\mathbf D\in\mathbb R^{d\times d}$ be in \eqref{eq:coefficients c and D 1}. Let $P_0^{(0)}\in\mathbb R^{n\times n}$ be in \eqref{eq:coefficients P0 and S0 1} and $\mathbf P_0^{(1)}\in (\mathbb R^{n\times n})^d$ be in \eqref{eq:coefficient P01 of the total projection P0 1}.

For $(\mathbf x,t)\in \mathbb R^d\times \mathbb R_+$, consider $\Gamma_t(\mathbf x):=\Gamma(\mathbf x,t)=\mathcal F^{-1}(e^{-E(i\mathbf k)t})\in\mathbb R^{n\times n}$, the kernel associated with the system \eqref{eq:the dissipative hyperbolic equation}, and $\tilde \Phi_t(\mathbf x):=\tilde \Phi(\mathbf x,t)=\mathcal F^{-1}(e^{-\mathbf c\cdot i\mathbf kt-\mathbf k\cdot \mathbf D\mathbf k t})\in\mathbb R$, the kernel associated with the system \eqref{eq:the parabolic equation}. Note that
\begin{equation}\label{eq:tilde Phi_t and Phi_t}
	\tilde \Phi_t*U_0(\mathbf x)=\Phi_t*u_0(\mathbf x),
\end{equation}
where
\begin{equation}\label{eq:Phi_t}
	\Phi_t(\mathbf x):=\Phi(\mathbf x,t)=\mathcal F^{-1}(e^{-\mathbf c\cdot i\mathbf kt-\mathbf k\cdot \mathbf D\mathbf k t}P_0^{(0)})\in\mathbb R^{n\times n}.
\end{equation}
Consider also the kernel $\tilde \Psi_t(\mathbf x):=\tilde \Psi(\mathbf x,t)=\mathcal F^{-1}(e^{-\mathbf k\cdot \mathbf D\mathbf kt})\in\mathbb R$ associated with the system \eqref{eq:the parabolic equation 2}. One has
\begin{equation}\label{eq:tilde Psi_t and Psi_t}
	\tilde \Psi_t*U_0(\mathbf x)=\Psi_t*u_0(\mathbf x),
\end{equation}
where
\begin{equation}\label{eq:Psi_t}
	\Psi_t(\mathbf x):=\Psi(\mathbf x,t)=\mathcal F^{-1}(e^{-\mathbf k\cdot \mathbf D\mathbf k t}(P_0^{(0)}+\mathbf P_0^{(1)}\cdot i\mathbf k))\in\mathbb R^{n\times n}.
\end{equation}

We are now able to give the proofs of Theorem \ref{theo:LpLq estimates 1} and Theorem \ref{theo:LpLq estimates 2} by using the estimates which will be proved later in Section \ref{sec:Decay estimates}.
\begin{proof}[Proof of Theorem \ref{theo:LpLq estimates 1}]
Let $u\in\mathbb R^n$ be the solution to \eqref{eq:the dissipative hyperbolic equation} with the initial data $u_0$ and $U\in\mathbb R^n$ be the solution to \eqref{eq:the parabolic equation} with the initial data $U_0$. One has
\begin{equation*}
	u(\mathbf x,t)=\Gamma_t*u_0(\mathbf x),\qquad U(\mathbf x,t)=\tilde \Phi_t*U_0(\mathbf x).
\end{equation*}
Moreover, by the relation \eqref{eq:tilde Phi_t and Phi_t}, one has
\begin{equation*}
	u(\mathbf x,t)-U(\mathbf x,t)=(\Gamma_t-\Phi_t)*u_0(\mathbf x),
\end{equation*}
where $\Phi_t$ is given by \eqref{eq:Phi_t}.

On the other hand, we decompose
\begin{equation*}
	u(\mathbf x,t)=u^{(1)}(\mathbf x,t)+u^{(2)}(\mathbf x,t),
\end{equation*}
where
\begin{equation*}
	u^{(1)}(\mathbf x,t)=\mathcal F^{-1}(e^{-E(i\mathbf k)t}P_0(i\mathbf k)\chi_1(\mathbf k))*u_0(\mathbf x)
\end{equation*}
and $u^{(2)}$ is the remainder, where $P_0$ is the eigenprojection associated with the eigenvalue of $E$ in \eqref{eq:operators E and A} converging to $0$ as $|\mathbf k|\to 0$ and $\chi_1$ is a cut-off function with support contained in the ball $B(\mathbf 0,\varepsilon)\subset \mathbb R^d$, valued in $[0,1]$, for small $\varepsilon>0$.

Therefore, by Proposition \ref{prop:low-frequency estimates}, Proposition \ref{prop:intermediate-frequency estimates 2} and Proposition \ref{prop:high-frequency estimates 2}, for $1\le q\le p\le \infty$, there is a constant $C>0$ such that we have
\begin{align*}
	\|u^{(1)}-U\|_{L^p}&\le \|\mathcal F^{-1}((\hat \Gamma_t(\mathbf k)P_0(i\mathbf k)-\hat \Phi_t(\mathbf k))\chi_1(\mathbf k))*u_0\|_{L^p}\nonumber\\
	&\hskip2cm+ \|\mathcal F^{-1}(\hat \Phi_t(\mathbf k)\chi_2(\mathbf k))*u_0\|_{L^p}+ \|\mathcal F^{-1}(\hat \Phi_t(\mathbf k)\chi_3(\mathbf k))*u_0\|_{L^p}\nonumber\\
	&\le Ct^{-\frac d2(\frac 1q-\frac 1p)-\frac 12}\|u_0\|_{L^p}, \qquad \forall t\ge 1,
\end{align*}
where $\chi_2:=1-\chi_1-\chi_3$ and $\chi_3$ is a cut-off function with support contained in $\{\mathbf k\in \mathbb R^d:|\mathbf k|>\rho\}$, valued in $[0,1]$, for large $\rho>0$.

Finally, by Proposition \ref{prop:low-frequency estimates}, Proposition \ref{prop:intermediate-frequency estimates} and Proposition \ref{prop:high-frequency estimates}, one also has
\begin{align*}
	\|u^{(2)}\|_{L^2}&\le \|\mathcal F^{-1}(\hat \Gamma_t(\mathbf k)(I-P_0(\mathbf k))\chi_1(\mathbf k))*u_0\|_{L^2}\nonumber\\
	&\hskip3cm+ \|\mathcal F^{-1}(\hat \Gamma_t(\mathbf k)(1-\chi_1(\mathbf k)))*u_0\|_{L^2}\nonumber\\
	&\le Ce^{-ct}\|u_0\|_{L^2},\qquad \forall t\ge 1,
\end{align*}
for some constants $c>0$ and $C>0$. The proof is done.
\end{proof}
%%%%%%%%%Example 1%%%%%%%%
\begin{example}
Consider the three-dimensional $3\times 3$ linear Goldstein--Kac system \eqref{eq:the dissipative hyperbolic equation} where
\begin{equation*}
	A^j=\begin{pmatrix}
	v_1^j & 0 & 0\\
	0 & v_2^j & 0\\
	0 & 0 & v_3^j
	\end{pmatrix},\qquad B=\begin{pmatrix}
		b+c & -c & -b\\
		-c & a+c & -a\\
		-b & -a & a+b
	\end{pmatrix},
\end{equation*}
where $v_i^j\in\mathbb R$ for $i,j\in\{1,2,3\}$ and $a,b,c> 0$, and the initial data is $u_0=(u_0^1,u_0^2,u_0^3)^T\in\mathbb R^3$. Let $\mathbf A_i=(v_i^1,v_i^2,v_i^3)$ for $i\in\{1,2,3\}$, we also consider the system \eqref{eq:the parabolic equation} where $\mathbf c=(\sum_{i=1}^3\mathbf A_i)/3$ and if $\mathbf c=\mathbf 0$, the matrix $\mathbf D$ is given by
\begin{equation*}
	\mathbf D=\dfrac{1}{3(ab+bc+ca)}\bigl(a\mathbf A_1\otimes \mathbf A_1+b\mathbf A_2\otimes \mathbf A_2+c\mathbf A_3\otimes \mathbf A_3\bigl).
\end{equation*}
Moreover, the initial data is chosen as
\begin{equation*}
	U_0=\dfrac{1}{3}(u_0^1+u_0^2+u_0^3,u_0^1+u_0^2+u_0^3,u_0^1+u_0^2+u_0^3)^T\in\mathbb R^3.
\end{equation*}
Theorem \ref{theo:LpLq estimates 1} then implies that the solution $u$ to the three-dimensional $3\times 3$ Goldstein--Kac system can be decomposed into $u=u^{(1)}+u^{(2)}$ such that the difference $u^{(1)}-U$ decays in $L^p(\mathbb R^d)$ at the rate $t^{-\frac 32(\frac 1q-\frac 1p)-\frac 12}$ with respect to $u_0$ in $L^q(\mathbb R^d)$ as $t\to +\infty$ for any $1\le q\le p\le \infty$, where $U$ is the solution to the above system \eqref{eq:the parabolic equation}. The formulas of $\mathbf c$ and $\mathbf D$ in fact coincide the formulas obtained by using the graph theory as in Example 3.3 p. 412 in \cite{mascia16}.
\end{example}
%%%%%%%%%%%%%Proof of Theorem 2%%%%%%%%%%%%%%%%
\vskip.25cm
We give the proof of Theorem \ref{theo:LpLq estimates 2}.
\begin{proof}[Proof of Theorem \ref{theo:LpLq estimates 2}]
The proof is similar to the proof of Theorem \ref{theo:LpLq estimates 1} where $\tilde \Phi_t$ and $\Phi_t$ are substituted by $\tilde \Psi_t$ and $\Psi_t$ respectively once considering $U$ to be the solution to \eqref{eq:the parabolic equation 2}. We finish the proof.
\end{proof}
%%%%%%%%%%%%%Example 2%%%%%%%%%%%%%%%%%%
\begin{example}
Consider the two-dimensional linearized isentropic Euler equations with damping
\begin{equation}\label{eq:invariant rotation}
\begin{cases}
	\partial_t\rho+\diver v=0,\\
	\partial_tv+\nabla_{\mathbf x}\rho=-v,
\end{cases}\quad (\mathbf x,t)\in \mathbb R^2\times \mathbb R_+,
\end{equation}
which can be written in the vectorial form
\begin{equation*}
	\partial_tu+A^1\partial_{x_1}u+A^2\partial_{x_2}u+Bu=0,
\end{equation*}
where $u=(\rho,v^1,v^2)^T\in\mathbb R^3$ with the initial data $u_0=(\rho_0,v^1_0,v^2_0)^T\in\mathbb R^3$ and
\begin{equation*}
	A^1=\begin{pmatrix}
	0 & 1 & 0\\
	1 & 0 & 0\\
	0 & 0 & 0
	\end{pmatrix},\qquad
	A^2=\begin{pmatrix}
	0 & 0 & 1\\
	0 & 0 & 0\\
	1 & 0 & 0
	\end{pmatrix},\qquad
	B=\begin{pmatrix}
	0 & 0 & 0\\
	0 & 1 & 0\\
	0 & 0 & 1
	\end{pmatrix}.
\end{equation*}
Moreover, the matrix $R$ satisfying the condition $\mathsf R$ and the matrix $S$ satisfying the condition $\mathsf S$ are given by
\begin{equation*}
	R(w_1,w_2)=\dfrac 12\begin{pmatrix}
	1 & 0 &1\\
	-w_1 & 2w_2 &w_1\\
	-w_2 & -2 w_1&w_2 
	\end{pmatrix},\qquad
	S=\begin{pmatrix}
	-1 & 0 & 0\\
	0 & 1 & 0\\
	0 & 0 & 1
	\end{pmatrix}.
\end{equation*}
Then, Theorem \ref{theo:LpLq estimates 2} implies that $u=u^{(1)}+u^{(2)}$, where $u^{(1)}$ has the asymptotic profile, which is the solution $U\in\mathbb R^3$ to the Cauchy problem
\begin{equation*}
	\begin{cases}\partial_t U -\Delta_{\mathbf x}U=0,&(\mathbf x,t)\in\mathbb R^2\times \mathbb R_+,\\
	U(\mathbf x,0)=U_0(\mathbf x),
	\end{cases}
\end{equation*}
where
\begin{equation*}
	U_0=(\rho_0-\partial_{x_1}v^1_0-\partial_{x_2}v^2_0,-\partial_{x_1}\rho_0,-\partial_{x_2}\rho_0)^T\in\mathbb R^3.
\end{equation*}
Moreover, $u^{(1)}-U$ decays in $L^p(\mathbb R^d)$ at the optimal rate $t^{-(\frac 1q-\frac 1p)-1}$ with respect to $u_0$ in $L^q(\mathbb R^d)$ as $t\to +\infty$ for any $1\le q\le p\le \infty$. This result is comparable with \cite{hosono04} since $\rho\in\mathbb R$ satisfying \eqref{eq:invariant rotation} also satisfies the linear damped wave equation
\begin{equation*}
	\begin{cases}
		\partial_{tt}^2\rho-\Delta_{\mathbf x}\rho+\partial_t\rho=0,& (\mathbf x,t)\in\mathbb R^2\times \mathbb R_+,\\
		\rho(\mathbf x,0)=\rho_0(\mathbf x),\\
		\partial_t\rho(\mathbf x,0)=-\partial_{x_1}v_0^1(\mathbf x)-\partial_{x_2}v^2_0(\mathbf x).
	\end{cases}
\end{equation*}
The proofs of Theorem 2.1 in \cite{hosono04} then implies that $\rho^{(1)}-\phi$ decays in $L^p(\mathbb R^d)$ at the rate $t^{-(\frac 1q-\frac 1p)-1}$ with respect to $(\rho_0,\partial_t\rho_0)$ in $L^q(\mathbb R^d)$ as $t\to +\infty$ for any $1\le q\le p\le \infty$, where $\phi$ is the solution to the heat equation
\begin{equation*}
	\begin{cases}
		\partial_{t}\phi-\Delta_{\mathbf x}\phi=0,& (\mathbf x,t)\in\mathbb R^2\times \mathbb R_+,\\
		\phi(\mathbf x,0)=\rho_0(\mathbf x)-\partial_{x_1}v^1_0(\mathbf x)-\partial_{x_2}v^2_0(\mathbf x).
	\end{cases}
\end{equation*}
\end{example}
%%%%%%%%%%%%%%Remark%%%%%%%%%%%%%%%%%%%
\begin{remark}[Proof of the case of finite speed of propagation]
In the case where $\Gamma_t$ has compact support contained in the wave cone $\{(\mathbf x,t)\in\mathbb R^d\times \mathbb R:|\mathbf x/t|\le C\}$ for some constant $C>0$, also by Proposition \ref{prop:low-frequency estimates} - Proposition \ref{prop:high-frequency estimates 2}, $u^{(1)}$ can be refined by
\begin{equation*}
	u^{(1)}(\mathbf x,t)=\mathcal F^{-1}(e^{-E(i\mathbf k)t}\chi_1(\mathbf k))*u_0(\mathbf x),
\end{equation*}
where $\chi_1$ is a cut-off function with support contained in the ball $B(\mathbf 0,\rho)\subset \mathbb R^d$, valued in $[0,1]$, for any $\rho>0$. The proof is then similar to the above proofs. Moreover, this property holds for the above two examples since they are in fact symmetric hyperbolic systems.
\end{remark}

%%%%%%%%%%%%%%%%PRELIMINARIES%%%%%%%%%%%%%

\section{Useful lemmas}\label{sec:Preliminaries}
This section is devoted to some useful facts of the Fourier analysis in \cite{chemin11,bergh12} and the perturbation analysis in \cite{kato}. They will be used in Section \ref{sec:Asymptotic expansions} and Section \ref{sec:Decay estimates}.
\subsection{Fourier analysis}
We introduce here the two well-known inequalities which are the Young inequality and the complex interpolation inequality. On the other hand, we also introduce a powerful Fourier multiplier estimate which is the estimate \eqref{eq:Carlson-Beurling Hs} given by Lemma \ref{lem:Carlson-Beurling}. The multiplier estimates are very helpful to study the $L^p$-$L^p$ estimate for $1\le p\le \infty$.
\begin{lemma}[Young's inequality]
For $1\le p,q,r\le \infty$ satisfying $1/p+1/q=1+1/r$ and any $f\in L^p(\mathbb R^d)$ and $g\in L^q(\mathbb R^d)$, one has $f*g\in L^r(\mathbb R^d)$ and the inequality
\begin{equation*}
	\|f*g\|_{L^r}\le \|f\|_{L^p}\|g\|_{L^q}.
\end{equation*}
\end{lemma}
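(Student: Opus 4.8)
The plan is to reduce at once to nonnegative $f,g$ — replacing $f,g$ by $|f|,|g|$ only enlarges $|f*g|$ pointwise — and then to split according to whether one of the exponents degenerates. First I would dispose of the boundary cases. If $r=\infty$, the constraint $1/p+1/q=1+1/r$ forces $1/p+1/q=1$, and then for each fixed $\mathbf x$ the bound $|f*g(\mathbf x)|\le\|f(\mathbf x-\cdot)\|_{L^p}\|g\|_{L^q}=\|f\|_{L^p}\|g\|_{L^q}$ is merely Hölder's inequality; the subcases $p=\infty$ and $q=\infty$ are contained here since each forces $r=\infty$. If $p=1$ (hence $q=r$), Minkowski's integral inequality gives
\[
	\|f*g\|_{L^q}=\Bigl\|\int f(\mathbf y)\,g(\cdot-\mathbf y)\,\dif\mathbf y\Bigr\|_{L^q}\le\int|f(\mathbf y)|\,\|g(\cdot-\mathbf y)\|_{L^q}\,\dif\mathbf y=\|f\|_{L^1}\|g\|_{L^q},
\]
by translation invariance of $\|\cdot\|_{L^q}$; the case $q=1$ is symmetric.

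It remains to treat the generic range, where $1<p,q<\infty$ and $r<\infty$; note that then $1/r=1/p+1/q-1<1/p$ and likewise $<1/q$, so $p<r$, $q<r$, and $1<r<\infty$ automatically. Here the conjugate-type exponents $s:=pr/(r-p)$ and $t:=qr/(r-q)$ lie in $(1,\infty)$ and satisfy $1/r+1/s+1/t=1/p+1/q-1/r=1$. I would then factor the convolution integrand as
\[
	|f(\mathbf x-\mathbf y)|\,|g(\mathbf y)|=\bigl(|f(\mathbf x-\mathbf y)|^p|g(\mathbf y)|^q\bigr)^{1/r}\cdot|f(\mathbf x-\mathbf y)|^{1-p/r}\cdot|g(\mathbf y)|^{1-q/r}
\]
and apply the three-factor Hölder inequality in $\mathbf y$ with exponents $r,s,t$, obtaining for a.e.\ $\mathbf x$
\[
	(|f|*|g|)(\mathbf x)\le\Bigl(\int|f(\mathbf x-\mathbf y)|^p|g(\mathbf y)|^q\,\dif\mathbf y\Bigr)^{1/r}\|f\|_{L^p}^{1-p/r}\|g\|_{L^q}^{1-q/r}.
\]
Raising this to the power $r$, integrating in $\mathbf x$, and using Tonelli's theorem together with the identity $\int\!\int|f(\mathbf x-\mathbf y)|^p|g(\mathbf y)|^q\,\dif\mathbf y\,\dif\mathbf x=\|f\|_{L^p}^p\|g\|_{L^q}^q$ yields $\|f*g\|_{L^r}^r\le\|f\|_{L^p}^r\|g\|_{L^q}^r$; taking $r$-th roots is the assertion. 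Running the same inequalities with $|f|,|g|$ in place of $f,g$ shows in passing that the defining integral for $f*g$ converges absolutely at a.e.\ $\mathbf x$, so $f*g$ is a well-defined element of $L^r(\mathbb R^d)$.

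This statement carries no real obstacle — it is the standard interpolation-by-Hölder argument, and one could instead derive the full range from the two endpoint bounds $\|f*g\|_{L^p}\le\|f\|_{L^1}\|g\|_{L^p}$ and $\|f*g\|_{L^\infty}\le\|f\|_{L^{p'}}\|g\|_{L^p}$ via the Riesz--Thorin interpolation theorem. The only thing that needs genuine care is the exponent bookkeeping: verifying that $s,t\in(1,\infty)$ and $1/r+1/s+1/t=1$ throughout the generic range, and handling the degenerate cases $r=\infty$, $p=1$, $q=1$ separately. Once the three-factor splitting is in place, the remainder is Fubini--Tonelli.
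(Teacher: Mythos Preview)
Your proof is correct and self-contained: the three-factor H\"older splitting is the standard argument, the boundary cases are handled cleanly, and the exponent bookkeeping ($1/s=1/p-1/r$, $1/t=1/q-1/r$, so $1/r+1/s+1/t=1/p+1/q-1/r=1$) checks out. The paper itself does not prove this lemma at all --- it simply refers the reader to Lemma~1.4 of \cite{chemin11} --- so there is nothing to compare beyond noting that you supply what the paper outsources.
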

\begin{proof}
See the proof of Lemma 1.4 p. 5 in \cite{chemin11}.
\end{proof}
\begin{lemma}[Complex interpolation inequality]
Consider a linear operator $T$ which continuously maps $L^{p_j}(\mathbb R^d)$ into $L^{q_j}(\mathbb R^d)$ for $1\le p_j,q_j\le \infty$ with $j\in\{0,1\}$. Let $\theta\in [0,1]$ be such that
\begin{equation*}
	\Bigl(\dfrac{1}{p_\theta},\dfrac{1}{q_\theta}\Bigr):=(1-\theta)\Bigl(\dfrac{1}{p_0},\dfrac{1}{q_0}\Bigr)+\theta\Bigl(\dfrac{1}{p_1},\dfrac{1}{q_1}\Bigr),
\end{equation*}
then $T$ continuously maps $L^{p_\theta}(\mathbb R^d)$ into $L^{q_\theta}(\mathbb R^d)$ and one has
\begin{equation*}
	\|T\|_{\mathcal L(L^{p_\theta};L^{q_\theta})}\le \|T\|_{\mathcal L(L^{p_0};L^{q_0})}\|T\|_{\mathcal L(L^{p_1};L^{q_1})}.
\end{equation*}
\end{lemma}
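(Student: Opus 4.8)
The plan is to prove this Riesz--Thorin type statement by the classical complex-analytic argument resting on Hadamard's three-lines lemma, combined with a duality reduction. Writing $M_j:=\|T\|_{\mathcal L(L^{p_j};L^{q_j})}$ and letting $q_\theta'$ denote the conjugate exponent of $q_\theta$, the first step is to reduce the operator bound to a bilinear estimate: it suffices to show
\begin{equation*}
	\Bigl|\int_{\mathbb R^d}(Tf)(\mathbf x)\,g(\mathbf x)\,\dif \mathbf x\Bigr|\le M_0^{1-\theta}M_1^\theta
\end{equation*}
for all simple functions $f,g$ with $\|f\|_{L^{p_\theta}}=1$ and $\|g\|_{L^{q_\theta'}}=1$. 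Indeed, since simple functions are dense in $L^{p_\theta}(\mathbb R^d)$ and $\|h\|_{L^{q_\theta}}=\sup\{|\int h g\,\dif\mathbf x|:\|g\|_{L^{q_\theta'}}\le 1\}$ when $q_\theta<\infty$, this bilinear inequality gives $\|Tf\|_{L^{q_\theta}}\le M_0^{1-\theta}M_1^\theta\|f\|_{L^{p_\theta}}$ first for simple $f$ and then, by density and continuity, for every $f\in L^{p_\theta}(\mathbb R^d)$.

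Next I would introduce, for $z$ in the closed strip $\{0\le \re z\le 1\}$, the exponents $p(z),q'(z)$ defined by
\begin{equation*}
	\dfrac{1}{p(z)}:=\dfrac{1-z}{p_0}+\dfrac{z}{p_1},\qquad \dfrac{1}{q'(z)}:=\dfrac{1-z}{q_0'}+\dfrac{z}{q_1'},
\end{equation*}
and the analytic families of functions
\begin{equation*}
	f_z:=|f|^{p_\theta/p(z)}\,\dfrac{f}{|f|},\qquad g_z:=|g|^{q_\theta'/q'(z)}\,\dfrac{g}{|g|},
\end{equation*}
with the conventions $0/|0|:=0$ and the evident modifications when $p(z)$ or $q'(z)$ equals $\infty$, so that $f_\theta=f$ and $g_\theta=g$. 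Because $f$ and $g$ are simple, the function $\Phi(z):=\int_{\mathbb R^d}(Tf_z)(\mathbf x)\,g_z(\mathbf x)\,\dif\mathbf x$ is a finite sum of terms of the form (constant)$\times a^{z}b^{\,z}$, hence holomorphic on the open strip, bounded and continuous up to its boundary; this is the point where one must record the routine verifications that $T$ is applied only to honest $L^{p_0}\cap L^{p_1}$ functions and that the resulting expression really is entire in $z$ on each level set.

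The third step is to estimate $\Phi$ on the two vertical edges. On $\re z=0$ one has $|f_{it}|=|f|^{p_\theta/p_0}$, so $\|f_{it}\|_{L^{p_0}}=\|f\|_{L^{p_\theta}}^{p_\theta/p_0}=1$ and likewise $\|g_{it}\|_{L^{q_0'}}=1$; hence by the mapping property of $T$ and H\"older's inequality $|\Phi(it)|\le \|Tf_{it}\|_{L^{q_0}}\|g_{it}\|_{L^{q_0'}}\le M_0$, and symmetrically $|\Phi(1+it)|\le M_1$. Hadamard's three-lines lemma applied to $\Phi$ on the strip then yields $|\Phi(\theta)|\le M_0^{1-\theta}M_1^\theta$, and since $\Phi(\theta)=\int (Tf)g\,\dif\mathbf x$ this is exactly the bilinear bound needed to close the argument of the first step.

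The main obstacle I expect is the careful handling of the endpoint exponents, which is the only genuinely delicate point. The reduction $(L^{q_\theta})'=L^{q_\theta'}$ and the density of simple functions in $L^{p_\theta}$ both fail when the exponent is $\infty$; by the definition of $p_\theta,q_\theta$ this occurs precisely when $p_0=p_1=\infty$, respectively $q_0=q_1=\infty$. In the case $q_0=q_1=\infty$ one runs the three-lines argument directly with $\sup_{\mathbf x}|(Tf_{it})(\mathbf x)|$ in place of the pairing, obtaining $\|Tf\|_{L^\infty}\le M_0^{1-\theta}M_1^\theta\|f\|_{L^{p_\theta}}$; the case $p_0=p_1=\infty$ is treated by testing against simple $f$ supported on sets of finite measure and using a limiting argument, or simply noting the trivialities at $\theta\in\{0,1\}$. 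These adjustments are standard (see \cite{bergh12}), and once they are in place the proof is complete.
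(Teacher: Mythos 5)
Your argument is the classical Riesz--Thorin proof via the three-lines lemma, which is exactly the proof in the cited reference (Corollary 1.12 of \cite{chemin11}); the paper itself offers no proof beyond that citation, so there is nothing to compare on the level of method. The reduction to the bilinear pairing, the construction of the analytic families $f_z$, $g_z$, the boundary estimates on $\re z=0$ and $\re z=1$, and the application of Hadamard's lemma are all sound, and you correctly flag the endpoint adjustments needed when $p_\theta=\infty$ or $q_\theta=\infty$. One small cosmetic remark: each term of $\Phi(z)$ is of the form (constant)$\times |a_j|^{\alpha+\beta z}|b_k|^{\gamma+\delta z}$ with affine exponents, not literally $a^z b^z$, but this does not affect the conclusion that $\Phi$ is holomorphic and bounded on the strip.

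Do note, however, that what you have proved is the standard sharp form
\begin{equation*}
\|T\|_{\mathcal L(L^{p_\theta};L^{q_\theta})}\le M_0^{1-\theta}M_1^{\theta},
\end{equation*}
whereas the lemma as printed in the paper reads $\|T\|_{\mathcal L(L^{p_\theta};L^{q_\theta})}\le M_0M_1$ with the exponents $1-\theta$, $\theta$ missing. The two inequalities are not comparable in general (take $M_0=M_1=\tfrac12$, for instance), so this is evidently a typo in the paper; the version you proved is the one stated in \cite{chemin11} and the one actually needed in the paper's interpolation arguments. It would be worth pointing this out rather than silently proving a different inequality.
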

\begin{proof}
See the proof of Corollary 1.12 p. 12 in \cite{chemin11}.
\end{proof}
\begin{lemma}[Carlson--Beurling]\label{lem:Carlson-Beurling}
If $\rho\in H^s(\mathbb R^d)$ for $s>d/2$, $\rho\in M_p(\mathbb R^d)$ and for some constant $C>0$, one has the estimate
\begin{equation}\label{eq:Carlson-Beurling Hs}
	\|\rho\|_{M_p}\le C\|\rho\|_{L^2}^{1-\frac{d}{2s}}\Bigl(\sum_{|\alpha|=s}\|\partial^\alpha\rho\|_{L^2}\Bigr)^{\frac{d}{2s}}, \qquad  1\le p\le\infty.
\end{equation}
\end{lemma}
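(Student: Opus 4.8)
The plan is to reduce the estimate to the classical Carlson--Beurling inequality, which bounds the $M_1$-norm (equivalently, the $L^1$-norm of an inverse Fourier transform) of a function $\rho$ by an interpolation between $\|\rho\|_{L^2}$ and $\|\,|\mathbf x|^s\check\rho\,\|_{L^2}$ — the latter being, by Plancherel, a sum of $L^2$-norms of the derivatives $\partial^\alpha\rho$ with $|\alpha|=s$ (assume $s$ is chosen to be an even integer, or more precisely work with the fractional-derivative version via the Fourier multiplier $|\mathbf k|^s$; the two are comparable up to constants). The first step is therefore: show $\rho\in M_1(\mathbb R^d)$ with $\|\rho\|_{M_1}\le C\|\mathcal F^{-1}\rho\|_{L^1}$, which is just the definition of the $L^1$-Fourier multiplier norm, since convolution against an $L^1$ kernel maps $L^1$ to $L^1$ with norm the $L^1$-norm of the kernel; this gives $M_1$-boundedness once we know $\mathcal F^{-1}\rho\in L^1$.

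Next I would establish the key $L^1$-bound on $\psi:=\mathcal F^{-1}\rho$. By Cauchy--Schwarz, split $\mathbb R^d$ at radius $\lambda$: write $\int_{\mathbb R^d}|\psi(\mathbf x)|\,\dif\mathbf x = \int_{|\mathbf x|\le\lambda} + \int_{|\mathbf x|>\lambda}$, and on the first piece apply Cauchy--Schwarz against the constant $1$ to get $\lambda^{d/2}$ times $\|\psi\|_{L^2}=c_d\|\rho\|_{L^2}$, while on the second piece apply Cauchy--Schwarz with the weight $|\mathbf x|^{-s}\cdot|\mathbf x|^s$ to get (since $s>d/2$ makes $|\mathbf x|^{-s}$ square-integrable at infinity) a constant times $\lambda^{d/2-s}$ times $\|\,|\mathbf x|^s\psi\,\|_{L^2}$, and the latter equals, by Plancherel, a constant times $\bigl(\sum_{|\alpha|=s}\|\partial^\alpha\rho\|_{L^2}^2\bigr)^{1/2}$, comparable to $\sum_{|\alpha|=s}\|\partial^\alpha\rho\|_{L^2}$. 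Optimising the resulting bound $C(\lambda^{d/2}X + \lambda^{d/2-s}Y)$ over $\lambda>0$, with $X=\|\rho\|_{L^2}$ and $Y=\sum_{|\alpha|=s}\|\partial^\alpha\rho\|_{L^2}$, gives the choice $\lambda\sim (Y/X)^{1/s}$ and the bound $C\,X^{1-\frac{d}{2s}}Y^{\frac{d}{2s}}$, which is exactly \eqref{eq:Carlson-Beurling Hs} for $p=1$. Then, since $\|\rho\|_{M_\infty}=\|\rho\|_{M_1}$ (the $M_p$ norms are symmetric under $p\mapsto p'$, and $M_2$ is just $L^\infty$ which is controlled by the same quantity via Sobolev embedding $H^s\hookrightarrow L^\infty$ for $s>d/2$), the full range $1\le p\le\infty$ follows from the interpolation property $M_p\supseteq M_1\cap M_\infty$ with $\|\rho\|_{M_p}\le\|\rho\|_{M_1}^{1-\theta}\|\rho\|_{M_\infty}^{\theta}$, i.e. the Riesz--Thorin / complex interpolation lemma stated above, applied to the convolution operator $\phi\mapsto\psi*\phi$.

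The main obstacle — really the only non-bookkeeping point — is the interpolation step giving $p=1$ from the square-function splitting, and in particular making sure the weighted $L^2$-norm $\|\,|\mathbf x|^s\psi\,\|_{L^2}$ is genuinely controlled by $\sum_{|\alpha|=s}\|\partial^\alpha\rho\|_{L^2}$ when $s$ is not an even integer; one handles this by interpreting $|\mathbf k|^s$ as a Fourier multiplier and noting $|\mathbf k|^s\lesssim\sum_{|\alpha|=s}|\mathbf k^\alpha|$ pointwise (up to a dimensional constant, after homogenising), so that $\|\,|\mathbf k|^s\rho\,\|_{L^2}\le C\sum_{|\alpha|=s}\|\mathbf k^\alpha\rho\|_{L^2}=C\sum_{|\alpha|=s}\|\widehat{\partial^\alpha\psi}\,\|_{L^2}$, and Plancherel closes the loop. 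Everything else — Young's inequality to get $M_1$-boundedness from the $L^1$ kernel bound, and the complex interpolation inequality for the $M_p$ scale — is quoted directly from the lemmas already proved above.
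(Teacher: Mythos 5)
Your proof is correct and is essentially the standard argument for the Carlson--Beurling inequality: split $\psi:=\mathcal F^{-1}\rho$ at radius $\lambda$, apply Cauchy--Schwarz on the inner ball (picking up $\lambda^{d/2}\|\rho\|_{L^2}$) and on the exterior with weight $|\mathbf x|^{\mp s}$ (picking up $\lambda^{d/2-s}\|\,|\mathbf x|^s\psi\,\|_{L^2}$, convergent because $s>d/2$), convert the weighted term to derivative norms via Plancherel and the pointwise bound $|\mathbf k|^s\lesssim\sum_{|\alpha|=s}|\mathbf k^\alpha|$, optimise $\lambda\sim(Y/X)^{1/s}$, and then pass from $M_1$ to general $M_p$ using $\|\rho\|_{M_1}=\|\rho\|_{M_\infty}$ by duality together with the complex interpolation inequality. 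The paper itself offers no proof at all, only the citation to Lemma 6.1.5 of Bergh--L\"ofstr\"om, whose proof is exactly this splitting argument, so the routes coincide.

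One remark on your caveat about non-integer $s$: the lemma as stated already presupposes $s$ to be a nonnegative integer, since the sum runs over multi-indices $\alpha$ with $|\alpha|=s$ (and the paper invokes the lemma only with integer $s>d/2$). So the fractional-derivative digression, while not wrong, is not needed here; the pointwise comparison $|\mathbf k|^s\lesssim\sum_{|\alpha|=s}|\mathbf k^\alpha|$ for integer $s$ follows directly by homogeneity and compactness of $\mathbb S^{d-1}$, on which the right-hand side is bounded below. Likewise the parenthetical appeal to $M_2=L^\infty$ and the Sobolev embedding is superfluous, since the interpolation between $M_1$ and $M_\infty$ already covers all $1\le p\le\infty$.
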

\begin{proof}
See the proof of Lemma 6.1.5 p.135 in \cite{bergh12}.
\end{proof}
\subsection{Perturbation analysis}
We consider the perturbation theory for linear operators in \cite{kato} that will be used for studying the asymptotic expansions of the fundamental solution to the system \eqref{eq:the dissipative hyperbolic equation}.
\vskip.25cm
Consider the operator $T(z)$ for $z\in\mathbb C$ having the form
\begin{equation}\label{eq:T(z)}
	T(z)=T^{(0)}+zT^{(1)}+z^2T^{(2)}+\dots, \qquad T^{(j)}\in\mathbb R^{n\times n}.
\end{equation}
Exceptional points of the analytic operator $T(z)$ in \eqref{eq:T(z)} for $z\in\mathbb C$ are defined to be points in where the the eigenvalues of $T(z)$ intersect. Nonetheless, they are of finite number in the plane. In the domain excluding these points, the operator $T(z)$ has $p$ holomorphic distinct eigenvalues with constant algebraic multiplicities. Moreover, the $p$ eigenprojections and the $p$ eigennilpotents associated with them are also holomorphic. In fact, the eigenvalues of $T(z)$ are solutions to the dispersion polynomial
\begin{equation*}
	\det(T(z)-\mu I)=0
\end{equation*}
with holomorphic coefficients. The eigenvalues of $T(z)$ are then branches of one or more than one analytic functions with algebraic singularities of at most order $n$. As a consequence, the number of eigenvalues of $T(z)$ is a constant except for a number of points which is finite in each compact set of the plane. The exceptional points can be either regular points of the analytic functions or branch-points of some eigenvalues of $T(z)$. In the former case, the eigenprojections and the eigennilpotents associated with the eigenvalues are bounded while in the latter case, they have poles at the exceptional points even if the eigenvalues are continuous there (see \cite{kato}).

We study the behavior of the eigenvalues of $T(z)$ and the associated eigenprojections and eigennilpotents near an exceptional point. Without loss of generality, we assume that the exceptional point is the point $0\in\mathbb C$. Let $\lambda^{(0)}$ be an eigenvalue of $T^{(0)}$ with algebraic multiplicity $m\ge1$ and let $P^{(0)}$ and $N^{(0)}$ be the associated eigenprojection and eigennilpotent. One has
\begin{equation*}
	T^{(0)}P^{(0)}=P^{(0)}T^{(0)}=P^{(0)}T^{(0)}P^{(0)}=\lambda^{(0)}P^{(0)}+N^{(0)},
\end{equation*}
\begin{equation*}
	\dim P^{(0)}=m,\qquad (N^{(0)})^m=O,\qquad P^{(0)}N^{(0)}=N^{(0)}P^{(0)}.
\end{equation*}
The eigenvalue $\lambda^{(0)}$ is in general split into several eigenvalues of $T(z)$ for small $z\ne 0$. The set of these eigenvalues is called the $\lambda^{(0)}$-group. The total projection of this group, denoted by $P(z)$, is holomorphic at $z=0$ and is approximated by
\begin{equation}\label{eq:Total projection}
	P(z)=P^{(0)}+zP^{(1)}+z^2P^{(2)}+\mathcal O(|z|^3),
\end{equation}
where $P^{(j)}$ can be computed in terms of the coefficients $T^{(j)}$ in \eqref{eq:T(z)} and the coefficients $N^{(0)}$, $P^{(0)}$ and $Q^{(0)}$ given respectively by $N^{(0)}=(T^{(0)}-\lambda^{(0)}I)P^{(0)}$ and 
\begin{equation}\label{eq:P0 and S0}
	P^{(0)}=-\dfrac{1}{2\pi i}\int_{\Gamma}(T^{(0)}-\mu I)^{-1}\,\dif\mu, \qquad Q^{(0)}=\dfrac{1}{2\pi i}\int_{\Gamma}\mu^{-1}(T^{(0)}-\mu I)^{-1}\,\dif \mu,
\end{equation}
where $\Gamma$, in the resolvent set of $T^{(0)}$, is an oriented closed curve enclosing $\lambda^{(0)}$ except for the other eigenvalues of $T^{(0)}$. In fact, from \cite{kato} (eq. (2.13) p. 76), one has
\begin{equation}\label{eq:coefficients P1 and P2}
	P^{(1)}=\sum_{i+j=1}X^{(i)}T^{(1)}X^{(j)}, \quad P^{(2)}=\sum_{i+j=1}X^{(i)}T^{(2)}X^{(j)}-\sum_{i+j+h=2}X^{(i)}T^{(1)}X^{(j)}T^{(1)}X^{(h)},
\end{equation}
where
\begin{equation}\label{eq:coefficients Xi}
	X^{(0)}=P^{(0)},\qquad X^{(i)}=(Q^{(0)})^i,\qquad X^{(-i)}=-(N^{(0)})^i,\qquad  \forall i\ge 1.
\end{equation}
Moreover, the subspace $\ran P(z):=P(z)\mathbb C^n$ is $m$-dimensional and invariant under $T(z)$. The $\lambda^{(0)}$-group eigenvalues of $T(z)$ are identical with all the eigenvalues of $T(z)$ in $\ran P(z)$. In order to determine the $\lambda^{(0)}$-group eigenvalues, therefore, we have only sole an eigenvalue problem in the subspace $\ran P(z)$, which is in general smaller than the whole space $\mathbb C^n$.

The eigenvalue problem for $T(z)$ in $\ran P(z)$ is equivalent to the eigenvalue problem for
\begin{equation}\label{eq:Tr(z)}
	T_r(z)=T(z)P(z)=P(z)T(z)=P(z)T(z)P(z).
\end{equation}
Thus, the $\lambda^{(0)}$-group eigenvalues of $T(z)$ are exactly those eigenvalues of $T_r(z)$ which are different from $0$, provided $|\lambda^{(0)}|$ is large enough to ensure that these eigenvalues do not vanish for the small $z$ under consideration. The last condition does not restrict the generality, for $T^{(0)}$ could be replaced by $T^{(0)}+\alpha$ with a suitable scalar $\alpha$ without changing the nature of the problem (see \cite{kato}).

We also have the following result in \cite{kato}.
\begin{lemma}[A simple case]\label{prop:a simple case}
If $T(z)=T^{(0)}+zT^{(1)}$ and $\lambda^{(0)}$ is a simple eigenvalue of $T^{(0)}$, the eigenvalue $\lambda(z)$ of $T(z)$ converging to $\lambda^{(0)}$ as $|z|\to 0$ and its associated eigenprojection $P(z)$ are holomorphic at $z=0$.
Moreover, for small $z\ne 0$, $P(z)$ is approximated by \eqref{eq:Total projection} with the coefficients $P^{(j)}$ for $j=0,1,2,\dots$ and $\lambda(z)$ is approximated by
\begin{equation}\label{eq:simple eigenvalue lambda}
	\lambda(z)=\lambda^{(0)}+z\lambda^{(1)}+z^2\lambda^{(2)}+\mathcal O(|z|^3),
\end{equation}
where
\begin{equation}\label{eq:coefficient lambdaj}
	\lambda^{(j)}=\dfrac{1}{j}\trace(T^{(1)}P^{(j-1)}),\qquad j=1,2,3,\dots
\end{equation}
On the other hand, the eigennilpotent associated with $\lambda(z)$ which is $N(z)=\bigl(T(z)-\lambda(z)I\bigr)P(z)$ vanishes identically.
\end{lemma}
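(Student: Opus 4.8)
The plan is to exploit that a \emph{simple} eigenvalue carries a one-dimensional total projection, so that the reduced eigenvalue problem on $\ran P(z)$ is scalar; this rules out all algebraic branching and makes $\lambda(z)$ and $P(z)$ holomorphic at $z=0$ with vanishing eigennilpotent.

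First I would record the consequences of $m:=\dim P^{(0)}=1$. The eigennilpotent $N^{(0)}=(T^{(0)}-\lambda^{(0)}I)P^{(0)}$ is nilpotent on the one-dimensional space $\ran P^{(0)}$, hence $N^{(0)}=O$, so in \eqref{eq:coefficients Xi} the terms $X^{(-i)}=-(N^{(0)})^i$ drop out and only $X^{(0)}=P^{(0)}$ and $X^{(i)}=(Q^{(0)})^i$, $i\ge1$, remain in \eqref{eq:coefficients P1 and P2}. By the general discussion preceding the lemma, the total projection $P(z)$ of the $\lambda^{(0)}$-group is holomorphic at $z=0$ and obeys \eqref{eq:Total projection}. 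Since $z\mapsto P(z)$ is holomorphic with each $P(z)$ a projection, the integer-valued function $z\mapsto\trace P(z)=\dim\ran P(z)$ is continuous and equals $1$ at $z=0$, hence equals $1$ for all small $z$; thus the $\lambda^{(0)}$-group is a single eigenvalue $\lambda(z)$ of algebraic multiplicity one — the unique continuous branch of eigenvalues of $T(z)$ tending to $\lambda^{(0)}$ — and $P(z)$ is its eigenprojection.

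Next, since $T(z)$ restricted to the one-dimensional invariant subspace $\ran P(z)$ is multiplication by the scalar $\lambda(z)$, one has $T(z)P(z)=\lambda(z)P(z)$, whence the associated eigennilpotent $N(z)=(T(z)-\lambda(z)I)P(z)=O$ vanishes identically. Taking traces in $T(z)P(z)=\lambda(z)P(z)$ and using $\trace P(z)=1$ gives $\lambda(z)=\trace\bigl(T(z)P(z)\bigr)$, holomorphic at $z=0$ since $T(z)$ and $P(z)$ are. For the coefficients I would differentiate this, using that $T'(z)=T^{(1)}$ is constant, to get $\lambda'(z)=\trace\bigl(T^{(1)}P(z)\bigr)+\trace\bigl(T(z)P'(z)\bigr)$; the second trace vanishes because $P(z)^2=P(z)$ (so $P'(z)=P'(z)P(z)+P(z)P'(z)$), $T(z)P(z)=P(z)T(z)=\lambda(z)P(z)$ and cyclicity of the trace yield $\trace\bigl(T(z)P'(z)\bigr)=2\lambda(z)\trace\bigl(P(z)P'(z)\bigr)=\lambda(z)\bigl(\trace P(z)\bigr)'=0$. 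Hence $\lambda'(z)=\trace\bigl(T^{(1)}P(z)\bigr)=\sum_{k\ge0}z^k\trace\bigl(T^{(1)}P^{(k)}\bigr)$ by \eqref{eq:Total projection}, and integrating term by term from $z=0$ yields $\lambda(z)=\lambda^{(0)}+\sum_{k\ge0}\frac{z^{k+1}}{k+1}\trace\bigl(T^{(1)}P^{(k)}\bigr)$, that is, \eqref{eq:simple eigenvalue lambda} with $\lambda^{(j)}=\frac1j\trace\bigl(T^{(1)}P^{(j-1)}\bigr)$ as in \eqref{eq:coefficient lambdaj}.

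The only point needing real care is the one-dimensional reduction: that the $\lambda^{(0)}$-group does not split and $\dim\ran P(z)$ stays equal to $1$ for small $z$ (which is what forces $N(z)\equiv O$ and the holomorphy of $\lambda(z)$). Once this is in place, and granted the facts on $P(z)$ quoted from the preamble, the remaining computations — the identity $\lambda(z)=\trace(T(z)P(z))$ and the trace manipulations leading to \eqref{eq:coefficient lambdaj} — are mechanical and rely only on $T(z)$ being affine in $z$.
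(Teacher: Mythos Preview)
Your argument is correct and complete. Your route differs from the paper's in how the coefficient formula \eqref{eq:coefficient lambdaj} is obtained. The paper works with the weighted mean $\hat\lambda(z)=\frac{1}{m}\trace(T(z)P(z))$ for arbitrary multiplicity $m$, quotes Kato's contour-integral expressions for the coefficients $\hat\lambda^{(j)}$ and $P^{(j)}$, observes that when $T^{(j)}=O$ for $j\ge 2$ the indices in those formulas are forced to $\nu_i=1$, and reads off $\hat\lambda^{(j)}=\frac{1}{mj}\trace(T^{(1)}P^{(j-1)})$; only then does it specialise to $m=1$, where $\hat\lambda(z)=\lambda(z)$. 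You instead exploit the one-dimensionality from the outset to get $\lambda(z)=\trace(T(z)P(z))$ directly, differentiate, and kill the term $\trace(T(z)P'(z))$ via the projection identity $P'=P'P+PP'$, the commutation $P(z)T(z)=T(z)P(z)=\lambda(z)P(z)$, and $(\trace P)'=0$. Your derivation is more self-contained (no need to import Kato's expansion of $\hat\lambda^{(j)}$), while the paper's intermediate step yields the general-$m$ formula for the weighted mean as a by-product before specialising.
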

\begin{proof}
For any eigenvalue $\lambda^{(0)}$ of $T^{(0)}$ with algebraic multiplicity $m\ge 1$, one considers the weighted mean of the $\lambda^{(0)}$-group defined by
\begin{equation*}
	\hat \lambda(z):=\dfrac{1}{m}\trace\bigl(T(z)P(z)\bigr)=\lambda^{(0)}+\dfrac{1}{m}\trace\bigl((T(z)-\lambda^{(0)}I)P(z)\bigr),
\end{equation*}
where $P(z)$ is the total projection associated with the $\lambda^{(0)}$-group.

We study the asymptotic expansions of $\hat \lambda(z)$ and $P(z)$ for small $z\ne 0$. The expansion of $P(z)$ is given by \eqref{eq:Total projection} and following \cite{kato} (eq. (2.8) p. 76), the coefficient $P^{(j)}$ in \eqref{eq:Total projection} satisfies
\begin{equation}\label{eq:coefficient Pj}
	P^{(j)}=-\dfrac{1}{2\pi i}\sum_{\substack{ \nu_1+\dots +\nu_p=j\\ \nu_i\ge 1,\, i=1,\dots,p}}(-1)^{p}\int _{\Gamma}R^{(0)}(\zeta)T^{(\nu_1)}R^{(0)}(\zeta)T^{(\nu_2)}\dots T^{(\nu_p)}R^{(0)}(\zeta)\,\dif\zeta,
\end{equation}
where $T^{(\nu_i)}$ for $i\in\{1,\dots,p\}$ are the coefficients in \eqref{eq:T(z)}, $R^{(0)}(\zeta):=(T^{(0)}-\zeta I)^{-1}$ is the resolvent of $T^{(0)}$ and $\Gamma$ is a small positively-oriented circle around $\lambda^{(0)}$. On the other hand, following \cite{kato} (eq. (2.21) p.78 and eq. (2.30) p.79), the weighted mean $\hat \lambda(z)$ of the $\lambda^{(0)}$-group is approximated by
\begin{equation}\label{eq:weighted mean hatlambda}
	\hat\lambda(z)=\lambda^{(0)}+z\hat\lambda^{(1)}+z^2\hat\lambda^{(2)}+\mathcal O(|z|^3),
\end{equation}
where the coefficient $\hat \lambda^{(j)}$ is given by
\begin{equation}\label{eq:coefficient hatlambdaj}
	\hat \lambda^{(j)}=\dfrac{1}{2\pi i m}\trace\Bigl(\sum_{\substack{ \nu_1+\dots +\nu_p=j\\ \nu_i\ge 1,\, i=1,\dots,p}}\dfrac{(-1)^{p}}{p}\int _{\Gamma}T^{(\nu_1)}R^{(0)}(\zeta)\dots R^{(0)}(\zeta)T^{(\nu_p)}R^{(0)}(\zeta)\,\dif \zeta\Bigr),
\end{equation}
where the relative coefficients are introduced before.

In the case where $T(z)=T^{(0)}+zT^{(1)}$, one has $T^{(j)}=O$, the null matrix, for $j\ge 2$. Furthermore, since $\nu_i$ in \eqref{eq:coefficient Pj} and \eqref{eq:coefficient hatlambdaj} satisfying $\nu_i\ge1 $, it implies that $\nu_i=1$ for all $i$. Hence, we obtain from \eqref{eq:coefficient Pj} and \eqref{eq:coefficient hatlambdaj} that
\begin{equation}\label{eq:coefficient hatlambdaj reduced}
	\hat\lambda^{(j)}=\dfrac{1}{mj}\trace \Bigl(T^{(1)} \Bigl(\dfrac{(-1)^{j}}{2\pi i}\int_{\Gamma}R^{(0)}(\zeta)T^{(1)}\dots T^{(1)}R^{(0)}(\zeta)\,\dif\zeta\Bigr)\Bigr)=\dfrac{1}{mj}\trace\bigl(T^{(1)}P^{(j-1)}\bigr).
\end{equation}

If $\lambda^{(0)}$ is a simple eigenvalue, one has $m=1$ and $\lambda^{(0)}$ is not split into many eigenvalues of $T(z)$. Thus, the $\lambda^{(0)}$-group contains only one single eigenvalue $\lambda(z)$ of $T(z)$ converging to $\lambda^{(0)}$ as $|z|\to 0$. Hence, $\lambda(z)=\hat \lambda(z)$ and the eigenprojection associated with $\lambda(z)$ is exactly the total projection $P(z)$ of the $\lambda^{(0)}$-group. Therefore, one obtains the expansion \eqref{eq:simple eigenvalue lambda} from \eqref{eq:weighted mean hatlambda} and one obtains the formula \eqref{eq:coefficient lambdaj} from \eqref{eq:coefficient hatlambdaj reduced}, where $m=1$. The eigenilpotent $N(z)$ associated with $\lambda(z)$ is obviously null since $\lambda(z)$ is simple. The proof is done.
\end{proof}
 
Moreover, one obtains the following result from Lemma \ref{prop:a simple case}. 
\begin{corollary}[Symmetry]\label{prop:symmetry}
Under the same assumptions of Lemma \ref{prop:a simple case}, if in addition, there is an invertible matrix $S\in \mathbb R^{n\times n}$ such that $ST^{(1)}=-T^{(1)}S$ and $ST^{(0)}=T^{(0)}S$, then $\lambda^{(j)}=0$ for all $j$ odd, where $\lambda^{(j)}$ for $j=1,2,\dots$ is the $j$-th coefficient in the formulas \eqref{eq:simple eigenvalue lambda} and \eqref{eq:coefficient lambdaj}.
\end{corollary}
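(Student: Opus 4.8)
The plan is to exploit the similarity $S T(z) S^{-1} = T(-z)$ induced by the hypotheses $S T^{(0)} = T^{(0)} S$ and $S T^{(1)} = -T^{(1)} S$, together with the simplicity of $\lambda^{(0)}$, in order to show that the eigenvalue branch $\lambda(z)$ of Lemma \ref{prop:a simple case} is an even function of $z$; evenness then forces $\lambda^{(j)} = 0$ for every odd $j$ when compared with the expansion \eqref{eq:simple eigenvalue lambda}.

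First I would record the elementary identity
\begin{equation*}
	S T(z) S^{-1} = S T^{(0)} S^{-1} + z\, S T^{(1)} S^{-1} = T^{(0)} - z T^{(1)} = T(-z),
\end{equation*}
which is valid since $T(z) = T^{(0)} + z T^{(1)}$. Thus $T(z)$ and $T(-z)$ are similar for every $z$, and in particular have the same eigenvalues with the same algebraic multiplicities. Next I would invoke the simplicity of $\lambda^{(0)}$: by Lemma \ref{prop:a simple case} (and the perturbation theory recalled before it) there are $\varepsilon > 0$ and an open neighbourhood $\mathcal U$ of $\lambda^{(0)}$ in $\mathbb C$ such that, for $|z| < \varepsilon$, the operator $T(z)$ has exactly one eigenvalue lying in $\mathcal U$, namely the holomorphic branch $\lambda(z)$ with $\lambda(0) = \lambda^{(0)}$. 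The map $z \mapsto \lambda(-z)$ is holomorphic on $|z| < \varepsilon$, takes the value $\lambda^{(0)}$ at $z = 0$, and, by the similarity just established, is an eigenvalue of $T(z)$ for every $z$; by continuity it lies in $\mathcal U$ once $|z|$ is small enough, so the uniqueness of the eigenvalue of $T(z)$ in $\mathcal U$ gives $\lambda(-z) = \lambda(z)$ there, and hence on all of $|z| < \varepsilon$ by the identity theorem. Therefore $\lambda$ is even and $\lambda^{(j)} = 0$ for all odd $j$.

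I do not expect a genuine obstacle here: the only point requiring care is making the uniqueness step rigorous (keeping $\lambda(-z)$ inside the distinguished neighbourhood $\mathcal U$), which is immediate from continuity of $\lambda$ at $z=0$. Alternatively, one can argue purely algebraically from \eqref{eq:coefficient lambdaj} and \eqref{eq:coefficient Pj}. Since $S$ commutes with $T^{(0)}$, it commutes with the resolvent $R^{(0)}(\zeta) = (T^{(0)} - \zeta I)^{-1}$ for every $\zeta$ in the resolvent set, hence with $P^{(0)}$ and $Q^{(0)}$ by \eqref{eq:P0 and S0}. In the present case $T^{(j)} = O$ for $j \ge 2$, so the sum in \eqref{eq:coefficient Pj} collapses to the single term with all $\nu_i = 1$ and $p = j$; moving $S$ through the $j$ copies of $T^{(1)}$ (each contributing a sign $-1$) and past the factors $R^{(0)}(\zeta)$ (which it commutes with) yields $S P^{(j)} S^{-1} = (-1)^j P^{(j)}$. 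Finally, for $j$ odd, $j-1$ is even, so $P^{(j-1)}$ commutes with $S$, and cyclicity of the trace together with $S T^{(1)} S^{-1} = -T^{(1)}$ gives $\trace(T^{(1)} P^{(j-1)}) = \trace(S T^{(1)} S^{-1}\, S P^{(j-1)} S^{-1}) = -\trace(T^{(1)} P^{(j-1)})$, whence $\lambda^{(j)} = \frac{1}{j}\trace(T^{(1)} P^{(j-1)}) = 0$, which finishes the proof.
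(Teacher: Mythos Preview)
Your proposal is correct and follows essentially the same approach as the paper: both hinge on the identity $S T(z) S^{-1} = T(-z)$ together with the simplicity of $\lambda^{(0)}$. Your first argument phrases this as evenness of $\lambda(z)$ (a slightly cleaner packaging), while your alternative argument via $S P^{(j)} S^{-1} = (-1)^j P^{(j)}$ and trace cyclicity is almost verbatim the paper's computation, the only cosmetic difference being that the paper compares the two expansions $\lambda_S(z)$ and $\lambda(z)$ coefficientwise rather than invoking trace cyclicity directly.
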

\begin{proof}
Recall $T(z)=T^{(0)}+zT^{(1)}$, one can study the eigenvalue problem for $T(z)$ by considering the operator
\begin{equation}\label{eq:TS(z)}
	T_S(z):=ST(z)S^{-1}=ST^{(0)}S^{-1}+zST^{(1)}S^{-1}=T^{(0)}-zT^{(1)}=T_S^{(0)}+zT_S^{(1)},
\end{equation}
where $T_S^{(0)}:=T^{(0)}$ and $T_S^{(1)}:=-T^{(1)}$. Thus, Lemma \ref{prop:a simple case} is applied to $T_S(z)$ since $\lambda^{(0)}$ is also a simple eigenvalue of $T_S^{(0)}$. It implies that the eigenvalue $\lambda_S(z)$ of $T_S(z)$ converging to $\lambda^{(0)}$ as $|z|\to 0$ and the associated eigenprojection $P_S(z)$ are holomorphic at $z=0$. Moreover, for small $z\ne 0$, the expansion of $P_S(z)$ is given by \eqref{eq:Total projection} with coefficients denoted by $P_S^{(j)}$ for $j=0,1,2,\dots$ and $\lambda_S(z)$ is approximated by
\begin{equation*}
	\lambda_S(z)=\lambda^{(0)}+z\lambda_S^{(1)}+z^2\lambda_S^{(2)}+\mathcal O(|z|^3),
\end{equation*}
where
\begin{equation}\label{eq:coefficient lambdaSj}
	\lambda_S^{(j)}=\dfrac{1}{j}\trace(T_S^{(1)}P_S^{(j-1)}),\qquad j=1,2,3,\dots
\end{equation}
On the other hand, the eigennilpotent $N_S(z)$ associated with $\lambda_S(z)$ vanishes identically.

Consider the total projection $P_S(z)$ associated with the $\lambda^{(0)}$-group of $T_S(z)$ in \eqref{eq:Total projection} with the coefficients $P_S^{(j)}$. We also consider the formula \eqref{eq:coefficient Pj} of $P_S^{(j)}$, namely
\begin{equation*}
	P_S^{(j)}=-\dfrac{1}{2\pi i}\sum_{\substack{ \nu_1+\dots +\nu_p=j\\ \nu_i\ge 1,\, i=1,\dots,p}}(-1)^{p}\int _{\Gamma}R_S^{(0)}(\zeta)T_S^{(\nu_1)}R_S^{(0)}(\zeta)T_S^{(\nu_2)}\dots T_S^{(\nu_p)}R_S^{(0)}(\zeta)\,\dif\zeta,
\end{equation*}
where $T_S^{(\nu_i)}$ for $i\in\{1,\dots,p\}$ are the coefficients in the expansion \eqref{eq:T(z)} of $T_S(z)$, $R_S^{(0)}(\zeta):=(T_S^{(0)}-\zeta I)^{-1}$ is the resolvent of $T_S^{(0)}$ and $\Gamma$ is a small positively-oriented circle around $\lambda^{(0)}$. Then, since $T_S^{(\nu_i)}=O$ for all $\nu_i\ge 2$ and since $\nu_i\ge 1$ for all $i$, one has
\begin{equation*}
	P_S^{(j)}=-\dfrac{1}{2\pi i}(-1)^{j}\int _{\Gamma}R_S^{(0)}(\zeta)T_S^{(1)}R_S^{(0)}(\zeta)T_S^{(1)}\dots T_S^{(1)}R_S^{(0)}(\zeta)\,\dif\zeta.
\end{equation*}
Since $T_S^{(0)}=T^{(0)}$ and $T_S^{(1)}=-T^{(1)}$, it follows that for all $j$, one has
\begin{equation}\label{eq:coefficient PSj}
	P_S^{(j)}=\begin{cases}
		P^{(j)}&\textrm{if }j \textrm{ is even},\\
		-P^{(j)}&\textrm{if }j\textrm{ is odd},
	\end{cases}
\end{equation}
where $P^{(j)}$ is the $j$-th coefficient in the expansion of the total projection $P(z)$ associated with the $\lambda^{(0)}$-group of $T(z)=T^{(0)}+zT^{(1)}$.

Hence, from \eqref{eq:coefficient lambdaj}, \eqref{eq:coefficient lambdaSj} and \eqref{eq:coefficient PSj}, we have
\begin{equation}\label{eq:coefficient lambdaSj 2}
	\lambda_S^{(j)}=\begin{cases}
		\lambda^{(j)}&\textrm{if }j \textrm{ is even},\\
		-\lambda^{(j)}&\textrm{if }j\textrm{ is odd},
	\end{cases}
\end{equation}
where $\lambda_j$ is the $j$-th coefficient in the expansion of the eigenvalue $\lambda(z)$ of $T(z)=T^{(0)}+zT^{(1)}$ converging to $\lambda^{(0)}$ as $|z|\to 0$.

Finally, since $\lambda_S(z)\equiv \lambda(z)$ due to \eqref{eq:TS(z)} and the fact that they are single eigenvalues, we deduce from \eqref{eq:coefficient lambdaSj 2} that $\lambda^{(j)}=-\lambda^{(j)}=0$ for all $j$ odd. We finish the proof.
\end{proof}
Let $\sigma(T,\mathcal D)$ be the spectrum of $T$ considered in the domain $\mathcal D$, we finish this section by introducing the reduction method in \cite{kato} which can be applied for the semi-simple-eigenvalue case.
\begin{lemma}[Reduction process]\label{prop:reduction process}
Let $T(z)$ be in \eqref{eq:T(z)} with the coefficients $T^{(i)}$ for $i=0,1,2,\dots$ and let $\lambda^{(0)}$ be a semi-simple eigenvalue of $T^{(0)}$. Let $P(z)$ in \eqref{eq:Total projection} with the coefficients $P^{(i)}$ for $i=0,1,2,\dots$ be the total projection of the $\lambda^{(0)}$-group. The following holds for small $z\ne 0$
\begin{equation}\label{eq:the division of T(z)P(z)}
	T(z)P(z)=\sum_{j=1}^p(\lambda^{(0)}I+zT_j(z))P_j(z),
\end{equation}
where $T_j(z)$ commutes with $P_j(z)$ and $P_j(z)$ satisfies
\begin{equation}\label{eq:subprojections}
	P_j(z)P_{j'}(z)=\delta_{jj'}P_j(z),\qquad \sum_{j=1}^pP_j(z)=P(z).
\end{equation}
The expansions of $T_j(z)$ and $P_j(z)$ are
\begin{equation}\label{eq:Tj(z)}
	T_j(z)=\lambda^{(0)}_jI+N^{(0)}_j+\mathcal O(|z|)
\end{equation}
and
\begin{equation}\label{eq:subprojection Pj(z)}
	P_j(z)=P_j^{(0)}+\mathcal O(|z|),
\end{equation}
where $\lambda^{(0)}_j\in\sigma(P^{(0)}T^{(1)}P^{(0)},\ker\,(T^{(0)}-\lambda^{(0)}I))$ with the associated eigenprojection $P_j^{(0)}$ and eigennilpotent $N_j^{(0)}$ for $j\in\{1,\dots,p\}$ and $p$ is the cardinality of $\sigma(P^{(0)}T^{(1)}P^{(0)},\ker\,(T^{(0)}-\lambda^{(0)}I))$.
\end{lemma}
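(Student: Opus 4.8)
The plan is to reduce the eigenvalue problem for $T(z)$ near the semi-simple eigenvalue $\lambda^{(0)}$ to an eigenvalue problem on the \emph{fixed} finite-dimensional space $M^{(0)}:=\ran P^{(0)}=\ker(T^{(0)}-\lambda^{(0)}I)$, and then to re-apply, on $M^{(0)}$, the perturbation theory set up in this subsection. Two structural facts will be used repeatedly: first, $\lambda^{(0)}$ being semi-simple, the associated eigennilpotent vanishes, $N^{(0)}=(T^{(0)}-\lambda^{(0)}I)P^{(0)}=O$, so that $T^{(0)}P^{(0)}=P^{(0)}T^{(0)}=\lambda^{(0)}P^{(0)}$; second, the total projection $P(z)$ of the $\lambda^{(0)}$-group in \eqref{eq:Total projection} is holomorphic at $z=0$, commutes with $T(z)$, and $\ran P(z)$ is an $m$-dimensional $T(z)$-invariant subspace, $m:=\dim P^{(0)}$.

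First I would pass to the transformation function attached to the holomorphic projections $P(z)$ (a standard device from \cite{kato}): a holomorphic, invertible operator-valued $U(z)$ near $z=0$ with $U(0)=I$ and $P(z)=U(z)P^{(0)}U(z)^{-1}$. Conjugating, $\hat T(z):=U(z)^{-1}T(z)U(z)$ is holomorphic, commutes with $P^{(0)}$, and $\hat T(0)=T^{(0)}$, so the reduced family $\hat T_r(z):=P^{(0)}\hat T(z)P^{(0)}$ acts on the fixed space $M^{(0)}$, one has $T(z)P(z)=U(z)\hat T_r(z)U(z)^{-1}$, and $\hat T_r(0)=P^{(0)}T^{(0)}P^{(0)}=\lambda^{(0)}P^{(0)}$ by semi-simplicity. (Alternatively one may work directly with $z^{-1}(T(z)-\lambda^{(0)}I)P(z)$, which commutes with $P(z)$; the transformation function merely turns this into a family on a fixed space.) Since $\hat T_r(z)-\lambda^{(0)}P^{(0)}$ vanishes at $z=0$, the function $\hat T_1(z):=z^{-1}(\hat T_r(z)-\lambda^{(0)}P^{(0)})$ extends holomorphically across $z=0$, whence $T(z)P(z)=\lambda^{(0)}P(z)+z\,U(z)\hat T_1(z)U(z)^{-1}$; differentiating the identity $T(z)P(z)=P(z)T(z)P(z)$ at $z=0$ and using $N^{(0)}=O$ (equivalently, the formulas \eqref{eq:coefficients P1 and P2}) identifies the leading coefficient as $\hat T_1(0)=P^{(0)}T^{(1)}P^{(0)}$, viewed as an operator on $M^{(0)}$.

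Then I would apply the perturbation theory of this subsection once more, to the holomorphic family $\hat T_1(z)$ on the $m$-dimensional space $M^{(0)}$. Its value at $z=0$ is the restriction of $P^{(0)}T^{(1)}P^{(0)}$ to $M^{(0)}=\ker(T^{(0)}-\lambda^{(0)}I)$, with distinct eigenvalues $\lambda^{(0)}_1,\dots,\lambda^{(0)}_p\in\sigma(P^{(0)}T^{(1)}P^{(0)},\ker(T^{(0)}-\lambda^{(0)}I))$, eigenprojections $P^{(0)}_1,\dots,P^{(0)}_p$ summing to $P^{(0)}$, and eigennilpotents $N^{(0)}_1,\dots,N^{(0)}_p$. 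For each $j$ the $\lambda^{(0)}_j$-group of $\hat T_1(z)$ has a holomorphic total projection $\hat P_j(z)$ with $\hat P_j(0)=P^{(0)}_j$, these being mutually orthogonal idempotents with $\sum_j\hat P_j(z)=P^{(0)}$, while the pieces $\hat T_{1,j}(z):=\hat T_1(z)\hat P_j(z)=\hat P_j(z)\hat T_1(z)\hat P_j(z)$ commute with $\hat P_j(z)$ and satisfy $\hat T_{1,j}(0)=(P^{(0)}T^{(1)}P^{(0)})P^{(0)}_j=\lambda^{(0)}_jP^{(0)}_j+N^{(0)}_j$. Transforming back via $P_j(z):=U(z)\hat P_j(z)U(z)^{-1}$ and $T_j(z):=U(z)\bigl(\hat T_{1,j}(z)+\lambda^{(0)}_j(I-\hat P_j(z))\bigr)U(z)^{-1}$ (the last summand, killed by $\hat P_j(z)$, is inserted only so that $T_j(z)$ has leading term $\lambda^{(0)}_jI+N^{(0)}_j$ rather than $\lambda^{(0)}_jP^{(0)}_j+N^{(0)}_j$), the relations \eqref{eq:subprojections} follow by conjugating the idempotent relations, the commutation of $T_j(z)$ with $P_j(z)$ is immediate, and since $\sum_j\hat T_1(z)\hat P_j(z)=\hat T_1(z)P^{(0)}=\hat T_1(z)$ one obtains $\sum_j(\lambda^{(0)}I+zT_j(z))P_j(z)=\lambda^{(0)}P(z)+z\,U(z)\hat T_1(z)U(z)^{-1}=T(z)P(z)$, which is \eqref{eq:the division of T(z)P(z)}; the expansions \eqref{eq:Tj(z)} and \eqref{eq:subprojection Pj(z)} are then read off from $U(0)=I$, $\hat P_j(0)=P^{(0)}_j$ and $\hat T_{1,j}(0)=\lambda^{(0)}_jP^{(0)}_j+N^{(0)}_j$.

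The hard part is the second paragraph, specifically factoring out $\lambda^{(0)}$, dividing by $z$, and pinning down $P^{(0)}T^{(1)}P^{(0)}$ as the leading coefficient of the reduced family: this is exactly where the semi-simplicity hypothesis is indispensable, since without $N^{(0)}=O$ the operator $(T^{(0)}-\lambda^{(0)}I)P^{(0)}$ does not vanish, the division by $z$ fails, and the $\lambda^{(0)}$-group eigenvalues generically split with fractional powers of $z$ rather than holomorphically. The existence and holomorphy of the transformation function $U(z)$ is the only other nontrivial ingredient, and it is classical \cite{kato}; everything else --- conjugating idempotent and commutation relations through $U(z)$ and re-running the perturbation theory on the smaller space $M^{(0)}$ --- is routine bookkeeping.
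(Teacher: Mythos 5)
Your proposal is correct and arrives at the same decomposition as the paper's proof, but by a genuinely different route. The paper works directly with the operator family $\tilde T(z) := z^{-1}(T(z)-\lambda^{(0)}I)P(z)$ (well-defined across $z=0$ thanks to semi-simplicity), identifies $\tilde T(0)=P^{(0)}T^{(1)}P^{(0)}$ from the expansion of the total projection together with $N^{(0)}=O$, and then appeals to Kato's continuity result to split the eigenvalues of $\tilde T(z)$ considered in $\ran P(z)$ according to the eigenvalues of $P^{(0)}T^{(1)}P^{(0)}$ in $\ker(T^{(0)}-\lambda^{(0)}I)$. You instead introduce the transformation function $U(z)$ and conjugate everything to the \emph{fixed} subspace $M^{(0)}=\ran P^{(0)}$, where the reduced family $\hat T_1(z)$ is an honest holomorphic family of matrices; perturbation theory can then be re-applied verbatim, and you conjugate back at the end. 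Since $\tilde T(z)=U(z)\hat T_1(z)U(z)^{-1}$, the two constructions are conjugate and the conclusions coincide. What your route buys is a cleaner justification of the second application of perturbation theory, because you never reason about eigenvalue problems on the $z$-dependent subspace $\ran P(z)$; what it costs is the extra apparatus of $U(z)$, which the paper sidesteps by citing the relevant theorem from \cite{kato}. You also handle a mild notational point more carefully: the paper sets $T_j(z):=\tilde T(z)P_j(z)$, whose leading coefficient is $\lambda^{(0)}_jP_j^{(0)}+N_j^{(0)}$, and states \eqref{eq:Tj(z)} as $\lambda^{(0)}_jI+N_j^{(0)}$ --- consistent once $T_j(z)$ is read as an operator on $\ran P_j(z)$, but your explicit addition of $\lambda^{(0)}_j(I-\hat P_j(z))$ (annihilated by $P_j(z)$, hence invisible in \eqref{eq:the division of T(z)P(z)}) makes \eqref{eq:Tj(z)} literal on the whole space.
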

\begin{proof}
Recall $T(z)$ and the coefficients $T^{(j)}$ in \eqref{eq:T(z)}. Recall the expansion of the total projection $P(z)$ of the $\lambda^{(0)}$-group of $T(z)$, generated by the eigenvalue $\lambda^{(0)}$ of $T^{(0)}$, and the coefficients $P^{(j)}$ in \eqref{eq:Total projection}. If $\lambda^{(0)}$ is semi-simple, one obtains from \eqref{eq:Tr(z)} that $(T(z)-\lambda^{(0)}I)P(z)=z\tilde T(z)$, where
\begin{equation}\label{eq:tildeT(z)}
	\tilde T(z)=\tilde T^{(0)}+z\tilde T^{(1)}+\mathcal O(|z|^2),
\end{equation}
where $\tilde T^{(0)}:=P^{(0)}T^{(1)}P^{(0)}$ and $\tilde T^{(1)}:=P^{(1)}T^{(0)}P^{(1)}+P^{(1)}T^{(1)}P^{(0)}+P^{(0)}T^{(1)}P^{(1)}$. Thus, the eigenvalues of $\tilde T(z)$ in $\ran P(z)$ are considered and in general, they converge to the eigenvalues of $\tilde T^{(0)}$ in $\ran P^{(0)}=\ker\,(T^{(0)}-\lambda^{(0)}I)$ as $|z|\to 0$ (see Theorem 2.3 p. 82 in \cite{kato}). One denotes the distinct eigenvalues of $\tilde T^{(0)}$ in $\ker(T^{(0)}-\lambda^{(0)}I)$ by $\lambda_j^{(0)}$ for $j\in\{1,\dots,p\}$. Then, $\lambda_j^{(0)}$ generates the $\lambda_j^{(0)}$-group of $\tilde T(z)$ similarly to the $\lambda^{(0)}$-group of $T(z)$ generated by the eigenvalue $\lambda^{(0)}$ of $T^{(0)}$. Moreover, the total projection $P_j(z)$ of the $\lambda_j^{(0)}$-group commutes with $\tilde T(z)$, satisfies \eqref{eq:subprojections} and is approximated by \eqref{eq:subprojection Pj(z)}.

Applying again \eqref{eq:Tr(z)} where $T(z)$ is substituted by $\tilde T(z)$ and $P(z)$ is substituted by $P_j(z)$, it follows from \eqref{eq:tildeT(z)} and \eqref{eq:subprojection Pj(z)} that
\begin{equation}\label{eq:T(z)Pj(z)}
	\tilde T(z) P_j(z)=\lambda_j^{(0)}I+N_j^{(0)}+\mathcal O(|z|),
\end{equation}
where $N_j^{(0)}$ is the eigennilpotent associated with $\lambda_j^{(0)}$. Let $T_j(z):=\tilde T(z)P_j(z)$ and using \eqref{eq:subprojections}, \eqref{eq:T(z)Pj(z)} and the fact that $T(z)P(z)=z\tilde T(z)$, one obtains \eqref{eq:the division of T(z)P(z)} and \eqref{eq:Tj(z)}. We finish the proof.
\end{proof}

%%%%%%%%%%%%%ASYMPTOTIC EXPANSIONS%%%%%%%%%%%%%
\section{Preliminaries to Section \ref{sec:Decay estimates}}\label{sec:Asymptotic expansions}
In this section, we study the asymptotic expansions of $E(i\mathbf k)=B+A(i\mathbf k)$ in \eqref{eq:operators E and A} for $\mathbf k\in\mathbb R^d$, which will be used in Section \ref{sec:Decay estimates}. One has
\begin{equation}\label{eq:radius-angular form of E(z)}
	E(i\mathbf k)=E(\zeta,\mathbf w):=B+i\zeta A(\mathbf w),
\end{equation}
where $\zeta:=|\mathbf k |\in [0,+\infty)$ and $\mathbf w:=\mathbf k/|\mathbf k|\in \mathbb S^{d-1}$. Moreover, since $\mathbb S^{d-1}$ is compact, $\zeta=0$ is an isolated exceptional point of $E(\zeta,\mathbf w)$ uniformly for $\mathbf w\in\mathbb S^{d-1}$ while there is a finite number of exceptional curves of $E(\zeta,\mathbf w)$ for $0<\zeta<+\infty$. The exceptional point $\zeta=+\infty$ is not a uniform exceptional point for $\mathbf w\in\mathbb S^{d-1}$ in general (see \cite{bianchini07,kato}). Nonetheless, we can approximate $E(\zeta,\mathbf w)$ near $\zeta=+\infty$ by subtracting a suitable Lebesgue measure zero set taken advantage of the conditions $\mathsf A$ and $\mathsf R$. In this paper, we are only interested in the asymptotic expansions of $E(\zeta,\mathbf w)$ near $\zeta=0$ and $\zeta=+\infty$. As a consequence of Lemma \ref{prop:a simple case} and Lemma \ref{prop:reduction process}, we obtain the followings.

\begin{proposition}[Low-frequency approximation]\label{prop:low-frequency}
If the assumptions $\mathsf B$ and $\mathsf D$ hold, then for small $\mathbf k\in\mathbb R^d$, $E(i\mathbf k)$ is approximated by
\begin{equation}\label{eq:E(z) for low frequency}
	E(i\mathbf k)=\lambda_0(i\mathbf k)P_0(i\mathbf k)+\sum_{j=1}^sE_j(i\mathbf k)P_j(i\mathbf k),
\end{equation}
where
\begin{equation}\label{eq:the 0-group}
	\lambda_0(i\mathbf k)=\mathbf c \cdot i\mathbf k+\mathbf k \cdot \mathbf D \mathbf k+\mathcal O(|\mathbf k|^3),
\end{equation}
where $\mathbf c=(c_h)\in \mathbb R^d$ and $\mathbf D=(D_{h\ell}) \in \mathbb R^{d\times d}$ is positive definite with scalar entries
\begin{equation}\label{eq:coefficients c and D}
	c_h=\trace\bigl(A^h P_0^{(0)}\bigr),\qquad D_{h\ell}=\dfrac{1}{2}\trace\bigl(A^{h}P_0^{(0)}A^\ell Q_0^{(0)}+A^h Q_0^{(0)}A^\ell P_0^{(0)}\bigr),
\end{equation}
and
\begin{equation}\label{eq:the total projection P0 of the 0-group}
	P_0(i\mathbf k)=P_0^{(0)}+\mathbf P_0^{(1)}\cdot i\mathbf k+\mathcal O(|\mathbf k|^2),
\end{equation}
where $\mathbf P_0^{(1)}=(P_{0h}^{(1)})\in (\mathbb R^{n\times n})^d$ with matrix entries
\begin{equation}\label{eq:coefficient P01 of the total projection P0}
	P_{0h}^{(1)}=-P_0^{(0)}A^hQ_0^{(0)}-Q_0^{(0)}A^hP_0^{(0)},
\end{equation}
and $E_j(i\mathbf k)$ commutes with $P_j(i\mathbf k)$ and one has
\begin{equation}\label{eq:Ej(z) for low frequency}
	E_j(i\mathbf k)=\lambda_j^{(0)}I+N_j^{(0)}+\mathcal O(|\mathbf k|),
\end{equation}
and
\begin{equation}\label{eq:the total projection Pj of the other groups}
	P_j(i\mathbf k)=P_j^{(0)}+\mathcal O(|\mathbf k|),
\end{equation}
where $\lambda_j^{(0)}$ with $\re \lambda_j^{(0)}>0$ is the $j$-th nonzero eigenvalue of $B$ with the associated eigenprojection $P_j^{(0)}$ and eigennilpotent $N_j^{(0)}$ for $j\in\{1,\dots,s\}$ and $s$ is the number of the distinct nonzero eigenvalues of $B$.
\vskip.15cm
Moreover, if the condition $\mathsf S$ holds in addition, we have
\begin{equation}\label{eq:the 0-group 2}
	\lambda_0(i\mathbf k)=\mathbf k \cdot \mathbf D \mathbf k+\mathcal O(|\mathbf k|^4).
\end{equation}
\end{proposition}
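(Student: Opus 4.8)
The plan is to localize in the radial direction. Writing $\mathbf k=\zeta\mathbf w$ with $\zeta=|\mathbf k|\ge 0$ and $\mathbf w=\mathbf k/|\mathbf k|\in\mathbb S^{d-1}$, one has $E(i\mathbf k)=E(\zeta,\mathbf w)=B+i\zeta A(\mathbf w)$ by \eqref{eq:radius-angular form of E(z)}, which, for each \emph{fixed} $\mathbf w$, is a linear analytic pencil $T(z)=T^{(0)}+zT^{(1)}$ in the scalar parameter $z:=i\zeta$, with $T^{(0)}:=B$ and $T^{(1)}:=A(\mathbf w)=\sum_{h=1}^dA^hw_h$. Since $E(0,\mathbf w)=B$ does not depend on $\mathbf w$ and $\mathbb S^{d-1}$ is compact, $z=0$ is an isolated exceptional point with a lower bound on the radius of convergence that is uniform in $\mathbf w$, so every expansion below holds for $|\mathbf k|<\varepsilon$ with $\varepsilon$ independent of the direction. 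By condition $\mathsf B$, $0$ is a simple eigenvalue of $T^{(0)}=B$, hence Lemma \ref{prop:a simple case} applies with $\lambda^{(0)}=0$: the $0$-group consists of a single holomorphic eigenvalue $\lambda_0$ with $\lambda_0=z\lambda^{(1)}+z^2\lambda^{(2)}+\mathcal O(|z|^3)$ as in \eqref{eq:simple eigenvalue lambda}, its eigenprojection $P_0=P_0^{(0)}+zP^{(1)}+\mathcal O(|z|^2)$ is holomorphic as in \eqref{eq:Total projection}, and the associated eigennilpotent vanishes, so $E(i\mathbf k)P_0(i\mathbf k)=\lambda_0(i\mathbf k)P_0(i\mathbf k)$. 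From \eqref{eq:coefficient lambdaj} one has $\lambda^{(1)}=\trace(A(\mathbf w)P_0^{(0)})$; from \eqref{eq:coefficients P1 and P2}--\eqref{eq:coefficients Xi}, with $N^{(0)}=O$ since $0$ is simple, $P^{(1)}=-\bigl(P_0^{(0)}A(\mathbf w)Q_0^{(0)}+Q_0^{(0)}A(\mathbf w)P_0^{(0)}\bigr)$ with $Q_0^{(0)}$ the reduced resolvent in \eqref{eq:coefficients P0 and S0 1}; and then $\lambda^{(2)}=\tfrac12\trace(A(\mathbf w)P^{(1)})$, again by \eqref{eq:coefficient lambdaj}.

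Next I would pass from the pair $(z,\mathbf w)$ back to $\mathbf k\in\mathbb R^d$. Substituting $A(\mathbf w)=\sum_h A^h w_h$ into the coefficients just computed and using the elementary identity $z\,w_h=ik_h$ — so that $z^m$ times a degree-$m$ homogeneous polynomial in $\mathbf w$ equals the same polynomial evaluated at $i\mathbf k$ — turns $z\lambda^{(1)}$ into $\mathbf c\cdot i\mathbf k$ with $c_h=\trace(A^hP_0^{(0)})$, turns $zP^{(1)}$ into $\mathbf P_0^{(1)}\cdot i\mathbf k$ with $P_{0h}^{(1)}$ as in \eqref{eq:coefficient P01 of the total projection P0}, and turns $z^2\lambda^{(2)}=-\zeta^2\lambda^{(2)}(\mathbf w)$ into $\mathbf k\cdot\mathbf D\mathbf k$ with $D_{h\ell}$ as in \eqref{eq:coefficients c and D}; symmetry $D_{h\ell}=D_{\ell h}$ is immediate from cyclic invariance of the trace. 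This proves \eqref{eq:the 0-group} and \eqref{eq:the total projection P0 of the 0-group}. For positive-definiteness: $B$ and the $A^h$ are real and $0$ is a real simple eigenvalue, so $P_0^{(0)},Q_0^{(0)}$, hence $\mathbf c$ and $\mathbf D$, are real; therefore $\re\lambda_0(i\mathbf k)=\mathbf k\cdot\mathbf D\mathbf k+\mathcal O(|\mathbf k|^3)$, and condition $\mathsf D$ forces $\mathbf k\cdot\mathbf D\mathbf k\ge\tfrac{\theta}{2}|\mathbf k|^2$ for $\mathbf k$ small, i.e.\ $\mathbf D>0$.

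For the remaining spectral groups, each nonzero $\lambda_j^{(0)}\in\sigma(B)$ is an isolated eigenvalue with $\re\lambda_j^{(0)}>0$ (condition $\mathsf B$); its total projection is holomorphic with $P_j(i\mathbf k)=P_j^{(0)}+\mathcal O(|\mathbf k|)$, and, setting $E_j(i\mathbf k):=E(i\mathbf k)P_j(i\mathbf k)$ (which commutes with $P_j(i\mathbf k)$), the reduction theory recalled in Lemma \ref{prop:reduction process} gives $E_j(i\mathbf k)=\lambda_j^{(0)}I+N_j^{(0)}+\mathcal O(|\mathbf k|)$ on $\ran P_j(i\mathbf k)$, which is \eqref{eq:Ej(z) for low frequency}--\eqref{eq:the total projection Pj of the other groups}; summing the $0$-group and these $s$ groups yields \eqref{eq:E(z) for low frequency}. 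Finally, under condition $\mathsf S$, for each fixed $\mathbf w$ the constant invertible matrix $S(\mathbf w)$ satisfies $S(\mathbf w)T^{(0)}=T^{(0)}S(\mathbf w)$ and $S(\mathbf w)T^{(1)}=-T^{(1)}S(\mathbf w)$, so Corollary \ref{prop:symmetry} yields $\lambda^{(j)}=0$ for all odd $j$; in particular $\lambda^{(1)}=0$, whence $\mathbf c\cdot\mathbf w\equiv 0$ and $\mathbf c=\mathbf 0$, and $\lambda^{(3)}=0$, so $\lambda_0(i\mathbf k)=z^2\lambda^{(2)}+z^4\lambda^{(4)}+\mathcal O(|z|^5)=\mathbf k\cdot\mathbf D\mathbf k+\mathcal O(|\mathbf k|^4)$, which is \eqref{eq:the 0-group 2}.

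There is no deep obstacle here: the content is essentially the dictionary between the scalar perturbation parameter $z=i|\mathbf k|$ (with $\mathbf w$ a spherical parameter) and the genuinely $d$-dimensional data $\mathbf c,\mathbf D,\mathbf P_0^{(1)}$, obtained by reading off the trace and resolvent formulas of Lemma \ref{prop:a simple case}. The one point that needs care — and the step I would treat most carefully — is the uniformity in $\mathbf w\in\mathbb S^{d-1}$ of all the expansions, so that the threshold "$|\mathbf k|<\varepsilon$" is direction-independent; this follows because $E(0,\mathbf w)=B$ is independent of $\mathbf w$ and $\mathbb S^{d-1}$ is compact, together with verifying the symmetry and positivity of $\mathbf D$ via condition $\mathsf D$.
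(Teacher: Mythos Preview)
Your proof is correct and follows essentially the same approach as the paper: pass to polar coordinates $(\zeta,\mathbf w)$, apply Lemma~\ref{prop:a simple case} to the simple eigenvalue $0$ of $B$ to get the expansions of $\lambda_0$ and $P_0$, read off $\mathbf c$, $\mathbf D$, $\mathbf P_0^{(1)}$ by expanding $A(\mathbf w)=\sum_hA^hw_h$, verify positive-definiteness of $\mathbf D$ from condition~$\mathsf D$, and invoke Corollary~\ref{prop:symmetry} under condition~$\mathsf S$. One small note: for the nonzero groups $j=1,\dots,s$ you cite Lemma~\ref{prop:reduction process}, but that lemma assumes semi-simplicity (not granted by condition~$\mathsf B$); the paper instead argues directly that $E_j(i\mathbf k):=E(i\mathbf k)P_j(i\mathbf k)=(B+i\zeta A(\mathbf w))(P_j^{(0)}+\mathcal O(\zeta))=BP_j^{(0)}+\mathcal O(\zeta)=\lambda_j^{(0)}P_j^{(0)}+N_j^{(0)}+\mathcal O(\zeta)$, which is all that is needed.
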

\begin{proof}
We primarily consider the $0$-group of $E(\zeta,\mathbf w)$ in \eqref{eq:radius-angular form of E(z)} for small $\zeta>0$ and $\mathbf w\in\mathbb S^{d-1}$. Recall the spectrum $\sigma(B)$ of $B$. Since $0\in\sigma(B)$ is simple if the assumption $\mathsf B$ holds, the eigennilpotent $N_0^{(0)}$ associated with $0\in\sigma(B)$ is a null matrix and one obtains from \eqref{eq:Total projection}, \eqref{eq:coefficients P1 and P2} and \eqref{eq:coefficients Xi} that the total projection $P_0(\zeta,\mathbf w)$ of the $0$-group is approximated by
\begin{equation}\label{eq:radius-angular P0}
	P_0(\zeta,\mathbf w)=P_0^{(0)}+i\zeta P_0^{(1)}(\mathbf w)+\mathcal O(\zeta^2),
\end{equation}
where $P_0^{(0)}$ is the eigenprojection associated with $0\in\sigma(B)$ and
\begin{equation}\label{eq:radius-angular P01}
	P_0^{(1)}(\mathbf w)=-P_0^{(0)}A(\mathbf w)Q_0^{(0)}-Q_0^{(0)}A(\mathbf w)P_0^{(0)}=-\sum_{h=1}^d(P_0^{(0)}A^hQ_0^{(0)}+Q_0^{(0)}A^hP_0^{(0)})w_h.
\end{equation}
On the other hand, by \eqref{eq:simple eigenvalue lambda} and \eqref{eq:coefficient lambdaj} in Lemma \ref{prop:a simple case}, the $0$-group of $E(\zeta,\mathbf w)$ consists of one single eigenvalue $\lambda_0(\zeta,\mathbf w)$ approximated by
\begin{equation}\label{eq:radius-angular lambda0}
	\lambda_0(\zeta,\mathbf w)=i\zeta \lambda_0^{(1)}(\mathbf w)-\zeta^2\lambda_0^{(2)}(\mathbf w)+\mathcal O(\zeta^3),
\end{equation}
where
\begin{equation}\label{eq:radius-angular lambda01}
	\lambda_0^{(1)}(\mathbf w)=\trace(A(\mathbf w)P_0^{(0)})=\sum_{h=1}^d\trace(A^hP_0^{(0)})w_h
\end{equation}
and
\begin{equation}\label{eq:radius-angular lambda02}
	\lambda_0^{(2)}(\mathbf w)=\dfrac{1}{2}\trace(A(\mathbf w)P_0^{(1)}(\mathbf w))=-\dfrac{1}{2}\sum_{h,\ell=1}^d\trace (A^hP_0^{(0)}A^\ell Q_0^{(0)}+A^hQ_0^{(0)}A^\ell P_0^{(0)})w_h w_\ell.
\end{equation}

We consider the other groups of $E(\zeta,\mathbf w)$ for small $\zeta>0$. Let $\lambda_j^{(0)}\in\sigma(B)\backslash\{0\}$ be the $j$-th nonzero eigenvalue of $B$ for $j\in\{1,\dots,s\}$, one deduces directly from \eqref{eq:Total projection} that the approximation of the total projection $P_j(\zeta,\mathbf w)$ of the $\lambda_j^{(0)}$-group is given by
\begin{equation}\label{eq:radius-angular Pj}
	P_j(\zeta,\mathbf w)=P_j^{(0)}+\mathcal O(\zeta),
\end{equation}
where $P_j^{(0)}$ is the eigenprojection associated with $\lambda_j^{(0)}\in\sigma(B)\backslash \{0\}$. Moreover, due to the discussion above \eqref{eq:Tr(z)}, the study of the $\lambda_j^{(0)}$-group of $E(\zeta,\mathbf w)$ is equivalent to the study of the eigenvalues of $E_j(\zeta,\mathbf w)=E(\zeta,\mathbf w)P_j(\zeta,\mathbf w)$ in $\ran P_j(\zeta,\mathbf w)$. Furthermore, one has
\begin{equation}\label{eq:radius-angular Ej}
	E_j(\zeta,\mathbf w)=(B+i\zeta A(\mathbf w))(P_j^{(0)}+\mathcal O(\zeta))=BP_j^{(0)}+\mathcal O(\zeta)=\lambda_j^{(0)}I+N_j^{(0)}+\mathcal O(\zeta),
\end{equation}
where $N_j^{(0)}=(B-\lambda_j^{(0)}I)P_j^{(0)}$ is the eigennilpotent associated with $\lambda_j^{(0)}\in\sigma(B)\backslash \{0\}$. On the other hand, by definition, one also has $E_j(\zeta,\mathbf w)$ commutes with $P_j(\zeta,\mathbf w)$.

Finally, since $\sum_{j=0}^sP_j(\zeta,\mathbf w)=I$, the identity matrix, one has
\begin{equation}\label{eq:radius-angular E}
	E(\zeta,\mathbf w)=\sum_{j=0}^sE(\zeta,\mathbf w)P_j(\zeta,\mathbf w)=\lambda_0(\zeta,\mathbf w)P_0(\zeta,\mathbf w)+\sum_{j=1}^sE_j(\zeta,\mathbf w)P_j(\zeta, \mathbf w).
\end{equation}
We thus obtain \eqref{eq:E(z) for low frequency} - \eqref{eq:the total projection Pj of the other groups} once considering \eqref{eq:radius-angular P0} - \eqref{eq:radius-angular E} in the coordinates $\mathbf k\in\mathbb R^d$ except for the fact that the matrix $\mathbf D$ in \eqref{eq:coefficients c and D} is positive definite.

We now prove that $\mathbf D$ is positive definite. Consider the eigenvalue $\lambda_0(i\mathbf k)$ in \eqref{eq:the 0-group} of $E(i\mathbf k)$ for $\mathbf k\in\mathbb R^d$ with the coefficients $\mathbf c\in \mathbb R^d$ and $\mathbf D\in\mathbb R^{d\times d}$ given by \eqref{eq:coefficients c and D}. If the assumption $\mathsf D$ holds, then since $\mathbf c\cdot \mathbf k\in \mathbb R$, there is a constant $\theta >0$ such that for small $\mathbf k\ne \mathbf 0\in\mathbb R^d$, one has
\begin{equation*}
	\theta |\mathbf k|^2/(1+|\mathbf k|^2)\le \re \lambda_0(i\mathbf k)\le \re (\mathbf k\cdot \mathbf D\mathbf k)+C|\mathbf k|^3.
\end{equation*}
As $|\mathbf k|\to 0$, one has $\re (\mathbf w\cdot \mathbf D\mathbf w)\ge \theta >0$ for all $\mathbf w\in\mathbb S^{d-1}$. Therefore, for any $\mathbf x \ne \mathbf 0\in\mathbb R^d$, one has $\re (\mathbf x^T\mathbf D\mathbf x)=|\mathbf x|^2\re (\mathbf w\cdot \mathbf D\mathbf w)>0$, where $\mathbf x^T$ is the transpose of the vector $\mathbf x$.

Finally, since the condition $\mathsf S$ implies that for $\mathbf w\in \mathbb R^d$, there is an invertible matrix $S=S(\mathbf w)$ satisfying $S(\mathbf w)A(\mathbf w)=-A(\mathbf w)S(\mathbf w)$ and $S(\mathbf w)B=BS(\mathbf w)$, we obtain \eqref{eq:the 0-group 2} directly from Corollary \ref{prop:symmetry}. The proof is done.
\end{proof}

We study $E(i\mathbf k)=B+A(i\mathbf k)$ for large $\mathbf k\in\mathbb R^d$. Recall $E(\zeta,\mathbf w)=B+i\zeta A(\mathbf w)$ in \eqref{eq:radius-angular form of E(z)} for $(\zeta,\mathbf w)\in [0,+\infty)\times \mathbb S^{d-1}$. Note that under the assumption $\mathsf A$, there is an invertible matrix $R=R(\mathbf w)$ for $\mathbf w\in\mathbb S^{d-1}$ such that $R^{-1}AR$ is a diagonal matrix with nonzero entries are real linear eigenvalues of $A=A(\mathbf w)$ for $\mathbf w\in\mathbb S^{d-1}$. Hence, one can consider the $\ell$-th diagonal element of $R^{-1}AR$ as the linear function
\begin{equation}\label{eq:eigenvalues of A}
	\nu_\ell(\mathbf w):=\nu_\ell^{(0)}+\sum_{h=1}^d\nu_\ell^{(h)}w_h,\qquad \mathbf w=(w_1,\dots,w_d)\in\mathbb S^{d-1},
\end{equation}
where the coefficients $\nu_\ell^{(h)}\in\mathbb R$ for $h\in\{0,1,\dots,d\}$.
Let ${\boldsymbol \nu}_\ell:=(\nu_\ell^{(0)},\dots, \nu_\ell^{(d)})$ be the coefficient vector associated with $\nu_\ell$ for $\ell\in\{1,\dots,n\}$, one sets
\begin{equation*}%\label{eq:component 1 of the partition S}
	\mathcal S_1:=\{\ell\in\{1,\dots,n\}:{\boldsymbol \nu}_\ell={\boldsymbol \nu}_1\}.
\end{equation*}
For $i_j:=\min \{\{1,\dots,n\}\backslash \cup_{h=1}^{j-1} \mathcal S_h\}$, one defines
\begin{equation*}%\label{eq:component j of the partition S}
	\mathcal S_j:=\{\ell \in\{1,\dots,n\}:{\boldsymbol \nu}_\ell={\boldsymbol \nu}_{i_j}\}\qquad j=2,3,\dots
\end{equation*}
This procedure will stop at some finite integer $r\le n$ and $\mathcal S:=\{\mathcal S_1,\dots,\mathcal S_r\}$ is considered as a partition of $\{1,\dots,n\}$. One denotes by $[j]$ the representation of the elements of $\mathcal S_j$ and by $r_j$ the cardinality of $\mathcal S_j$ for $j\in\{1,\dots,r\}$.
\begin{lemma}[Measure-zero-set subtraction]\label{lem:the eigenvalues of A}
There is a Lebesgue measure zero set contained in $\mathbb S^{d-1}$ such that except for this set, the number of distinct eigenvalues of $A(\mathbf w)$ for $\mathbf w\in\mathbb S^{d-1}$ is $r$ and the algebraic multiplicities associated with them are $r_j$ for $j\in\{1,\dots,r\}$.
\end{lemma}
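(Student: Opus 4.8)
The plan is to reduce everything to the elementary fact that a genuine affine hyperplane meets the sphere in a surface-measure-null set. By Condition $\mathsf A$ the similarity by $R=R(\mathbf w)$ makes $R^{-1}AR$ diagonal with entries the affine functions $\nu_1,\dots,\nu_n$ of \eqref{eq:eigenvalues of A}, so the characteristic polynomial of $A(\mathbf w)$ is $\prod_{\ell=1}^n(\mu-\nu_\ell(\mathbf w))$. Hence the algebraic multiplicity of an eigenvalue $\lambda$ of $A(\mathbf w)$ equals $\#\{\ell:\nu_\ell(\mathbf w)=\lambda\}$, and two branches $\nu_\ell$, $\nu_{\ell'}$ agree at $\mathbf w$ exactly when $\mathbf w$ lies on the affine set $\{\mathbf w:\nu_\ell(\mathbf w)=\nu_{\ell'}(\mathbf w)\}$. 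So it suffices to control these coincidence sets on $\mathbb S^{d-1}$.

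First I would set
\[
	\mathcal N:=\bigcup_{\substack{1\le \ell<\ell'\le n\\ \boldsymbol\nu_\ell\ne\boldsymbol\nu_{\ell'}}}\{\mathbf w\in\mathbb S^{d-1}:\nu_\ell(\mathbf w)=\nu_{\ell'}(\mathbf w)\}.
\]
For a fixed pair with $\boldsymbol\nu_\ell\ne\boldsymbol\nu_{\ell'}$, the difference $\nu_\ell-\nu_{\ell'}$ is a nonzero affine function $\mathbf w\mapsto a_0+\mathbf a\cdot\mathbf w$ with $(a_0,\mathbf a)=\boldsymbol\nu_\ell-\boldsymbol\nu_{\ell'}\ne\mathbf 0$. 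If $\mathbf a=\mathbf 0$ the coincidence set is empty; if $\mathbf a\ne\mathbf 0$ it equals $\mathbb S^{d-1}\cap H$ for an affine hyperplane $H$, hence is empty, a single point, or a sphere of dimension at most $d-2$, and in every case has vanishing $(d-1)$-dimensional surface measure. Being a finite union of such sets, $\mathcal N$ is Lebesgue null in $\mathbb S^{d-1}$.

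Next I would verify the assertion on $\mathbb S^{d-1}\setminus\mathcal N$. Fix $\mathbf w\notin\mathcal N$. By construction of the partition $\mathcal S=\{\mathcal S_1,\dots,\mathcal S_r\}$, all indices in a block $\mathcal S_j$ share the coefficient vector $\boldsymbol\nu_{[j]}$, so $\nu_\ell(\mathbf w)=\nu_{[j]}(\mathbf w)$ for every $\ell\in\mathcal S_j$; and for $j\ne j'$ one has $\boldsymbol\nu_{[j]}\ne\boldsymbol\nu_{[j']}$, whence $\nu_{[j]}(\mathbf w)\ne\nu_{[j']}(\mathbf w)$ precisely because $\mathbf w\notin\mathcal N$. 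Therefore the multiset $\{\nu_1(\mathbf w),\dots,\nu_n(\mathbf w)\}$ consists of the $r$ distinct numbers $\nu_{[1]}(\mathbf w),\dots,\nu_{[r]}(\mathbf w)$, and the number of indices $\ell$ with $\nu_\ell(\mathbf w)=\nu_{[j]}(\mathbf w)$ is $|\mathcal S_j|=r_j$, which is exactly the algebraic multiplicity of $\nu_{[j]}(\mathbf w)$ as an eigenvalue of $A(\mathbf w)$; the lemma follows with $\mathcal N$ as the exceptional set.

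I do not expect a serious obstacle here: the only points requiring care are the degenerate case $\mathbf a=\mathbf 0$ (two distinct branches that are parallel but offset, hence never equal), and the elementary observation that an affine hyperplane intersects $\mathbb S^{d-1}$ in a surface-measure-null set — this is what rules out two distinct branches coinciding on a set of positive measure and is the crux of the argument.
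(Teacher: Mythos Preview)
Your proposal is correct and follows essentially the same route as the paper: identify the coincidence locus of two affine branches with distinct coefficient vectors as the intersection of an affine hyperplane with $\mathbb S^{d-1}$, note this is surface-measure null, and take the finite union over pairs. Your write-up is in fact slightly more careful than the paper's, since you explicitly handle the degenerate case $\mathbf a=\mathbf 0$ (distinct constant terms, zero linear part) and you spell out why the algebraic multiplicity equals $|\mathcal S_j|$ via the factorization of the characteristic polynomial.
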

\begin{proof}
Recall the partition $\mathcal S=\{\mathcal S_1,\dots,\mathcal S_r\}$ with cardinality $r$. Assume that there are $i,j\in\{1,\dots,r\}$ such that $i\ne j$ and $\nu_{[i]}(\mathbf w_0)=\nu_{[j]} (\mathbf w_0)$ for some $\mathbf w_0\in\mathbb S^{d-1}$. We prove that $\mathbf w_0$ belongs to a Lebesgue measure zero set in $\mathbb R^{d-1}$. In fact, $\mathbf w_0$ belongs to the intersection of the affine hyperplane
\begin{equation*}
	(\nu_{[i]}^{(0)}-\nu_{[j]}^{(0)})+\sum_{h=1}^d(\nu_{[i]}^{(h)}-\nu_{[j]}^{(h)})x_h=0,\qquad (x_1,\dots,x_d)\in\mathbb R^d,
\end{equation*}
whose dimension is at most $d-1$ since the coefficient vectors ${\boldsymbol \nu}_{[i]}$ and ${\boldsymbol \nu}_{[j]}$ satisfy ${\boldsymbol \nu}_{[i]}\ne {\boldsymbol \nu}_{[j]}$ for any $i\ne j$ by definiton, and the unit sphere $\mathbb S^{d-1}$. Moreover, the dimension of the intersection is at most $d-2$ and it is therefore a Lebesgue measure zero set in $\mathbb R^{d-1}$. Thus, $\nu_{[i]}(\mathbf w)\ne \nu_{[j]}(\mathbf w)$ for any $i\ne j$ and for $\mathbf w\in\mathbb S^{d-1}$ subtracted a Lebesgue measure zero set. Finally, since the repeated eigenvalues of $A(\mathbf w)$ are $\nu_\ell (\mathbf w)$ determined by the coefficient vectors ${\boldsymbol \nu}_\ell$ for $\ell\in\{1,\dots,n\}$, it follows immediately that the number of distinct eigenvalues of $A(\mathbf w)$ for $\mathbf w\in\mathbb S^{d-1}$ is $r$ and the algebraic multiplicities associated with them are $r_j$, the cardinality of $\mathcal S_j$, for $j\in\{1,\dots,r\}$ excluding a Lebesgue measure zero set. We finish the proof.
\end{proof}

One sets, for $j\in\{1,\dots,r\}$, the projection
\begin{equation}\label{eq:the eigenprojection Pij0}
	(\Pi_j^{(0)})_{h\ell}:=
	\begin{cases}
		1& \textrm{if }h=\ell\in\mathcal S_j,\\
		0& \textrm{otherwise.}
	\end{cases}
\end{equation}
Let $R=R(\mathbf w)$ for $\mathbf w\in\mathbb S^{d-1}$ be the matrix satisfying the conditions $\mathsf A$ and $\mathsf R$. One has
\begin{proposition}[High-frequency approximation]
If the assumptions $\mathsf A$, $\mathsf R$ and $\mathsf D$ hold, then for large $\mathbf k\in\mathbb R^d$, $E(i\mathbf k)$ is almost everywhere approximated by
\begin{equation}\label{eq:E(z) for high frequency}
	E(i\mathbf k)=R\sum_{j=1}^r\sum_{m=1}^{s_j}\Upsilon_{jm}(i\mathbf k)\Pi_{jm}(i\mathbf k)R^{-1},
\end{equation}
where the constant $s_j\le r_j$ which is also constant as well as $r$, $\Upsilon_{jm}(i\mathbf k)$ commutes with $\Pi_{jm}(i\mathbf k)$ and one has
\begin{equation}\label{eq:Upsilonjm(z) for high frequency}
	\Upsilon_{jm}(i\mathbf k)=(\alpha_j(i\mathbf k)+\beta_{jm})I+\Theta_{jm}^{(0)}+\mathcal O(|\mathbf k|^{-1})
\end{equation}
and
\begin{equation}\label{eq:the total projection Pijm0}
	\Pi_{jm}(i\mathbf k)=\Pi_{jm}^{(0)}+\mathcal O(|\mathbf k|^{-1}),
\end{equation}
where $\alpha_j(i\mathbf k)=i|\mathbf k|\nu_{[j]}(\mathbf k/|\mathbf k|)$ for $\nu_{[j]}$ is in \eqref{eq:eigenvalues of A}, $\beta_{jm}$ with $\re \beta_{jm}>0$ is the $m$-th nonzero eigenvalue of $\Pi_j^{(0)}R^{-1}BR\Pi_j^{(0)}$ with the associated eigenprojection $\Pi_{jm}^{(0)}$ and eigennilpotent $\Theta_{jm}^{(0)}$.
\end{proposition}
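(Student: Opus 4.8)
The plan is to turn the singular limit $\zeta=|\mathbf k|\to+\infty$ into a \emph{regular} analytic perturbation problem at the origin and then feed the rescaled operator into Lemma \ref{prop:reduction process}. First I would use conditions $\mathsf A$ and $\mathsf R$: there is an invertible $R=R(\mathbf w)$, with $R^{\pm1}$ uniformly bounded on $\mathbb S^{d-1}$, such that $R^{-1}A(\mathbf w)R=\Lambda(\mathbf w):=\diag(\nu_1(\mathbf w),\dots,\nu_n(\mathbf w))$ with $\nu_\ell$ as in \eqref{eq:eigenvalues of A}, while $\tilde B:=R^{-1}BR$ is constant. Then from \eqref{eq:radius-angular form of E(z)} one has $R^{-1}E(\zeta,\mathbf w)R=\tilde B+i\zeta\Lambda(\mathbf w)=i\zeta(\Lambda(\mathbf w)+\varepsilon\tilde B)$ with $\varepsilon:=1/(i\zeta)$, so the behaviour of $E$ near $\zeta=+\infty$ is governed by the analytic family $T(\varepsilon):=T^{(0)}+\varepsilon T^{(1)}$, $T^{(0)}:=\Lambda(\mathbf w)$, $T^{(1)}:=\tilde B$, as $\varepsilon\to0$; the fact that $\varepsilon$ runs along the imaginary axis is immaterial since the Kato expansions are valid for all small complex $\varepsilon$.

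Next I would invoke Lemma \ref{lem:the eigenvalues of A}: after removing a Lebesgue‑null subset of $\mathbb S^{d-1}$, the diagonal matrix $T^{(0)}=\Lambda(\mathbf w)$ has exactly $r$ distinct (hence semi‑simple) eigenvalues $\nu_{[j]}(\mathbf w)$, $j=1,\dots,r$, each of multiplicity $r_j$ and eigenprojection $\Pi_j^{(0)}$ given by \eqref{eq:the eigenprojection Pij0}, with $r$ and $r_j$ independent of $\mathbf w$ there and $\sum_{j=1}^r\Pi_j^{(0)}=I$. This is precisely the setting of Lemma \ref{prop:reduction process}, which I would apply with $\lambda^{(0)}=\nu_{[j]}(\mathbf w)$: it produces the total projection $P_j(\varepsilon)=\Pi_j^{(0)}+\mathcal O(|\varepsilon|)$ of the $\nu_{[j]}$-group, the splitting $T(\varepsilon)P_j(\varepsilon)=\sum_{m=1}^{s_j}(\nu_{[j]}(\mathbf w)I+\varepsilon T_{jm}(\varepsilon))P_{jm}(\varepsilon)$ with $T_{jm}$ commuting with $P_{jm}$, $P_{jm}P_{jm'}=\delta_{mm'}P_{jm}$, $\sum_mP_{jm}=P_j$, and the leading terms $T_{jm}(\varepsilon)=\beta_{jm}I+\Theta_{jm}^{(0)}+\mathcal O(|\varepsilon|)$, $P_{jm}(\varepsilon)=\Pi_{jm}^{(0)}+\mathcal O(|\varepsilon|)$, where $\beta_{jm}$ runs over the eigenvalues of $\Pi_j^{(0)}\tilde B\Pi_j^{(0)}=\Pi_j^{(0)}R^{-1}BR\Pi_j^{(0)}$ in $\ran\Pi_j^{(0)}$ with eigenprojections $\Pi_{jm}^{(0)}$ and eigennilpotents $\Theta_{jm}^{(0)}$.

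Then I would undo the rescaling. Since $\sum_{j=1}^r\Pi_j^{(0)}=I$, the total projections satisfy $\sum_{j=1}^rP_j(\varepsilon)=I$ for small $\varepsilon$, so $T(\varepsilon)=\sum_{j,m}(\nu_{[j]}(\mathbf w)I+\varepsilon T_{jm}(\varepsilon))P_{jm}(\varepsilon)$; multiplying by $i\zeta$, conjugating back by $R$, and using $i\zeta\varepsilon=1$ gives $E(\zeta,\mathbf w)=R\bigl(\sum_{j,m}(i\zeta\nu_{[j]}(\mathbf w)I+T_{jm}(\varepsilon))P_{jm}(\varepsilon)\bigr)R^{-1}$. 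Setting $\alpha_j(i\mathbf k):=i|\mathbf k|\nu_{[j]}(\mathbf k/|\mathbf k|)$, $\Upsilon_{jm}(i\mathbf k):=\alpha_j(i\mathbf k)I+T_{jm}(\varepsilon)$ and $\Pi_{jm}(i\mathbf k):=P_{jm}(\varepsilon)$ with $\varepsilon=1/(i|\mathbf k|)$, and rewriting $\mathcal O(|\varepsilon|)=\mathcal O(|\mathbf k|^{-1})$, yields \eqref{eq:E(z) for high frequency}, \eqref{eq:Upsilonjm(z) for high frequency} and \eqref{eq:the total projection Pijm0}; $\Upsilon_{jm}$ commutes with $\Pi_{jm}$ because $T_{jm}$ does with $P_{jm}$ and $\alpha_jI$ is scalar. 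To finish I would read off the eigenvalues of $E(i\mathbf k)$ in the $\nu_{[j]}$-block: since $\Theta_{jm}^{(0)}$ is nilpotent they differ from $\alpha_j(i\mathbf k)+\beta_{jm}$ by a quantity tending to $0$ as $|\mathbf k|\to+\infty$, and as $\nu_{[j]}$ is real one has $\re\alpha_j(i\mathbf k)=0$, so their real part converges to $\re\beta_{jm}$; condition $\mathsf D$ then forces $\re\beta_{jm}\ge\theta>0$. In particular no $\beta_{jm}$ vanishes, hence the $\beta_{jm}$ are exactly the nonzero eigenvalues of $\Pi_j^{(0)}R^{-1}BR\Pi_j^{(0)}$, $\sum_{m=1}^{s_j}\Pi_{jm}^{(0)}=\Pi_j^{(0)}$, and $s_j\le r_j$ with $s_j$ and $r$ constant in $\mathbf w$ off the null set.

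The main obstacle I anticipate is the \emph{uniformity in $\mathbf w\in\mathbb S^{d-1}$}: only after discarding the null set of Lemma \ref{lem:the eigenvalues of A} are the group count $r$, the multiplicities $r_j$ and the numbers $s_j$ independent of $\mathbf w$, and it is the uniform bound on $R(\mathbf w)^{\pm1}$ from condition $\mathsf A$ that keeps all remainders $\mathcal O(|\mathbf k|^{-1})$ uniform — without this one cannot organise the expansion near $\zeta=+\infty$ at all, which is precisely why $\mathsf A$ and $\mathsf R$ enter. The second delicate point is the use of $\mathsf D$: beyond giving $\re\beta_{jm}>0$, it is what excludes the a priori possible zero‑eigenvalue subgroups of $\Pi_j^{(0)}R^{-1}BR\Pi_j^{(0)}$ inside $\ran\Pi_j^{(0)}$, so that the statement's labelling ``$m$-th nonzero eigenvalue'' is consistent and $\sum_{m=1}^{s_j}\Pi_{jm}^{(0)}=\Pi_j^{(0)}$ holds.
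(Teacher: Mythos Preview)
Your proposal is correct and follows essentially the same route as the paper: rescale $R^{-1}E(\zeta,\mathbf w)R$ to a regular perturbation of the diagonal $\Lambda(\mathbf w)$, use Lemma~\ref{lem:the eigenvalues of A} to freeze $r$, $r_j$ and $\Pi_j^{(0)}$ off a null set, feed each semi-simple group into Lemma~\ref{prop:reduction process}, undo the rescaling, and finally use condition~$\mathsf D$ with $\re\alpha_j(i\mathbf k)=0$ to get $\re\beta_{jm}>0$ (hence $\beta_{jm}\ne0$, matching the ``nonzero eigenvalue'' labelling). The only cosmetic difference is that the paper uses the real parameter $\eta=\zeta^{-1}$ and absorbs a factor $-i$ into $T^{(1)}$, writing $\tilde E(\eta,\mathbf w)=R^{-1}AR(\mathbf w)-i\eta R^{-1}BR$ and $E(\zeta,\mathbf w)=i\zeta R\tilde E(\zeta^{-1},\mathbf w)R^{-1}$, whereas you take the complex parameter $\varepsilon=1/(i\zeta)$ directly; both unwind to the same $\beta_{jm}\in\sigma(\Pi_j^{(0)}R^{-1}BR\Pi_j^{(0)},\ran\Pi_j^{(0)})$ and the same $\mathcal O(|\mathbf k|^{-1})$ remainders.
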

\begin{proof}
Based on Lemma \ref{lem:the eigenvalues of A}, if the condition $\mathsf A$ holds, the spectrum of $R^{-1}AR(\mathbf w)$ for $\mathbf w\in\mathbb S^{d-1}$ is the set $\{\alpha_1(\mathbf w),\dots,\alpha_r(\mathbf w)\}$ where $\alpha_j(\mathbf w)=\nu_{[j]}(\mathbf w)$ given by \eqref{eq:eigenvalues of A} for $j\in\{1,\dots,r\}$ with finite constant $r$, the cardinality of $\mathcal S$, and $[j]$ is the representation of the elements of $\mathcal S_j$, for almost everywhere. Thus, from here in this proof, we consider always for almost everywhere and we drop $\mathbf w$ in the coefficients written in below if they are in fact constant for almost everywhere.

For $j\in\{1,\dots,r\}$, we study the $\alpha_j(\mathbf w)$-group of $\tilde E(\eta,\mathbf w)$ for $(\eta,\mathbf w)\in [0,+\infty)\times \mathbb S^{d-1}$, where
\begin{equation*}
	\tilde E(\eta,\mathbf w):=R^{-1}AR(\mathbf w)-i\eta R^{-1}BR,
\end{equation*}
for small $\eta>0$. One obtains from \eqref{eq:Total projection}, \eqref{eq:coefficients P1 and P2} and \eqref{eq:coefficients Xi} that the total projection $\Pi_j(\eta,\mathbf w)$ of the $\alpha_j(\mathbf w)$-group is approximated by
\begin{equation*}
	\Pi_j(\eta,\mathbf w)=\Pi_j^{(0)}(\mathbf w)+\mathcal O(\eta),
\end{equation*}
where $\Pi_j^{(0)}(\mathbf w)$ is the eigenprojection associated with $\alpha_j(\mathbf w)\in\sigma(R^{-1}AR(\mathbf w))$, the spectrum of $R^{-1}AR(\mathbf w)$. Moreover, by the definition of eigenprojection, if $\Gamma_j$ is an oriented closed curve in the resolvent set of $R^{-1}AR(\mathbf w)$ enclosing $\alpha_j(\mathbf w)$ except for the other eigenvalues of $R^{-1}AR(\mathbf w)$, then
\begin{align*}\label{eq:the computation of the eigenprojection Pij0}
	\Pi_j^{(0)}(\mathbf w)&=-\dfrac{1}{2\pi i}\int_{\Gamma_j}\diag \bigl(\nu_1(\mathbf w)-\mu )^{-1},\dots,(\nu_n(\mathbf w)-\mu)^{-1})\,\dif\mu \nonumber\\
	&=\diag \Bigl(-\dfrac{1}{2\pi i}\int_{\Gamma_j}(\nu_1(\mathbf w)-\mu )^{-1}\,d\mu,\dots,-\dfrac{1}{2\pi i}\int_{\Gamma_j}(\nu_n(\mathbf w)-\mu)^{-1}\,\dif\mu\Bigr)
\end{align*}
and it coincides \eqref{eq:the total projection Pijm0} since
\begin{equation*}
	-\dfrac{1}{2\pi i}\int_{\Gamma_j}(\nu_\ell (\mathbf w)-\mu )^{-1}\,\dif\mu=
	\begin{cases}
	1 &\textrm{if }  \ell\in\mathcal S_j,\\
	0 & \textrm{if } \ell\notin \mathcal S_j.
	\end{cases}
\end{equation*}
Thus, $\Pi_j^{(0)}(\mathbf w)$ is constant for almost everywhere and we can write $\Pi_j^{(0)}$ instead. On the other hand, since $\alpha_j(\mathbf w)$ is semi-simple, one has
\begin{equation}\label{eq:temp2}
	\ker(R^{-1}AR(\mathbf w)-\alpha_j(\mathbf w)I)=\ran \Pi_j^{(0)}.
\end{equation}

Therefore, by \eqref{eq:the division of T(z)P(z)} - \eqref{eq:subprojection Pj(z)} in Lemma \ref{prop:reduction process}, the formula of $\tilde E(\eta,\mathbf w)$ and \eqref{eq:temp2}, we have
\begin{equation*}
	\tilde E \Pi_j(\eta,\mathbf w)=\sum_{m=1}^{s_j}(\alpha_j(\mathbf w)I-i\eta\tilde E_{jm}(\eta,\mathbf w))\Pi_{jm}(\eta,\mathbf w),
\end{equation*}
where $\tilde E_{jm}(\eta,\mathbf w)$ commutes with $\Pi_{jm}(\eta,\mathbf w)$ and one has
\begin{equation*}
	\tilde E_{jm}(\eta,\mathbf w)=\beta_{jm}I+\Theta_{jm}^{(0)}+\mathcal O(\eta)
\end{equation*}
and
\begin{equation*}
	\Pi_{jm}(\eta,\mathbf w)=\Pi_{jm}^{(0)}+\mathcal O(\eta),
\end{equation*}
where $\beta_{jm}$ is the $m$-th eigenvalue of $\Pi_j^{(0)} R^{-1}BR\Pi_j^{(0)}$ considered in $\ran \Pi_j^{(0)}$ with the associated eigenprojection $\Pi_{jm}^{(0)}$ and eigennilpotent $\Theta_{jm}^{(0)}$ and $s_j$ is the number of such eigenvalues of $\Pi_j^{(0)} R^{-1}BR\Pi_j^{(0)}$. Note that $\beta_{jm}$, $\Pi_{jm}^{(0)}$, $\Theta_{jm}^{(0)}$ and $s_j$ are constant due to the fact that $R^{-1}BR$ is constant under the assumption $\mathsf R$ and $\Pi_j^{(0)}$ is constant for almost everywhere. Moreover, since one has
\begin{equation}\label{eq:temp3}
	\Pi_j^{(0)} R^{-1}BR\Pi_j^{(0)}(I-\Pi_j^{(0)})=O,
\end{equation}
where $O$ is the null matrix, {\it i.e.} $0$ is an eigenvalue of $\Pi_j^{(0)} R^{-1}BR\Pi_j^{(0)}$ considered in $\ran (I-\Pi_j^{(0)})$ with algebraic multiplicity $\dim (I-\Pi_j^{(0)})=n-\dim\ran \Pi_j^{(0)}$, it follows that $s_j\le r_j=\dim \ran \Pi_j^{(0)}$, the cardinality of $\mathcal S_j$, by definition.

Therefore, since $E(\zeta,\mathbf w)=i\zeta R\tilde E R^{-1}(\zeta^{-1},\mathbf w)$ where $\zeta^{-1}\to 0$ as $\zeta \to +\infty$, one obtains
\begin{equation}\label{eq:radius-angular E(z) for high frequency}
	E(\zeta,\mathbf w)=R\sum_{j=1}^r\sum_{m=1}^{s_j}\Upsilon_{jm}(\zeta,\mathbf w)\Pi_{jm}(\zeta,\mathbf w)R^{-1},
\end{equation}
where $\Upsilon_{jm}(\zeta,\mathbf w)$ commutes with $\Pi_{jm}(\zeta,\mathbf w)$ and
\begin{equation}\label{eq:radius-angular Upsilonjm(z) for high frequency}
	\Upsilon_{jm}(\zeta,\mathbf w)=(i\zeta \alpha_j(\mathbf w)+\beta_{jm})I+\Theta_{jm}^{(0)}+\mathcal O(\zeta^{-1})
\end{equation}
and
\begin{equation}\label{eq:radius-angular the total projection Pijm0}
	\Pi_{jm}(\zeta,\mathbf w)=\Pi_{jm}^{(0)}+\mathcal O(\zeta^{-1}).
\end{equation}

On the other hand, it then follows from \eqref{eq:radius-angular E(z) for high frequency} - \eqref{eq:radius-angular the total projection Pijm0} that for large $\zeta$, the eigenvalues of $E(\zeta,\mathbf w)$ are the eigenvalues of $\Upsilon_{jm}(\zeta,\mathbf w)$ and they are approximated by
\begin{equation*}
	\lambda_{jm}(\zeta,\mathbf w)=i\zeta\alpha_j(\mathbf w)+\beta_{jm}+{\scriptstyle \mathcal O}(1),
\end{equation*}
for $j\in\{1,\dots,r\}$ and $m\in\{1,\dots,s_j\}$. Thus, if the assumption $\mathsf D$ holds, then since $\alpha_j(\mathbf w)\in\mathbb R$, there is a constant $\theta >0$ such that for $0<\zeta^{-1}<\varepsilon$ small, one has
\begin{equation*}
	\theta /(1+\varepsilon^2 )\le \re\lambda_{jm}(\zeta,\mathbf w)\le \re \beta_{jm}+\varepsilon.
\end{equation*}
Let $\varepsilon\to 0$, one has $\re \beta_{jm}\ge \theta >0$. Moreover, one observes from \eqref{eq:temp3} that the nonzero eigenvalues of $\Pi_j^{(0)}R^{-1}BR\Pi_j^{(0)}$ always belong to the set of the eigenvalues of $\Pi_j^{(0)}R^{-1}BR\Pi_j^{(0)}$ considered in $\ran \Pi_j^{(0)}$. Thus, we can consider that $\beta_{jm}$ are the nonzero eigenvalues of $\Pi_j^{(0)}R^{-1}BR\Pi_j^{(0)}$ without specifying that they are considered in $\ran \Pi_j^{(0)}$ or not. We finish the proof by writing \eqref{eq:radius-angular E(z) for high frequency} - \eqref{eq:radius-angular the total projection Pijm0} in the coordinates $\mathbf k\in\mathbb R^d$.
\end{proof}
\begin{remark}[Intermediate-frequency approximation]
In this paper, we will not use any expansions of $E(i\mathbf k)=E(\zeta,\mathbf w)$ in the intermediate frequency but note that there is a finite number of exceptional curves of $E(\zeta,\mathbf w)$ for $0<\zeta<+\infty$ in general. In the domain excluding these curves, the number of distinct eigenvalues of $E(\zeta,\mathbf w)$ and their algebraic multiplicities are constant (see \cite{kato,bianchini07}).
\end{remark}

\section{Decay estimates (Core of the paper)}\label{sec:Decay estimates}
In this section, we prove the estimates used in the proofs of Theorem \ref{theo:LpLq estimates 1} and Theorem \ref{theo:LpLq estimates 2}.  We primarily give a priori estimates for the principal parabolic part of the fundamental solution $\Gamma_t$ to the system \eqref{eq:the dissipative hyperbolic equation}. Then, we estimate $\Gamma_t$ by diving the frequency space into: the low frequency, the intermediate frequency and the high frequency. The main proofs are related to the interpolation between the $L^\infty$-$L^1$ estimate and the $L^p$-$L^p$ estimate for $1\le p\le \infty$.
Moreover, the $L^\infty$-$L^1$ estimate is obtained directly while the $L^p$-$L^p$ estimate is obtained based on the Carlson--Beurling inequality \eqref{eq:Carlson-Beurling Hs} in Lemma \ref{lem:Carlson-Beurling}. Moreover, since the Carlson--Beurling inequality \eqref{eq:Carlson-Beurling Hs} depends on the analysis of partial derivatives, one considers the followings.

\vskip.15cm
Let $\mathcal I$ be an index-set given by $\mathcal I:=\{i_1,\dots,i_s\}$ with possible repeated indices $i_\ell\in\{1,\dots,d\}$ {\it i.e.} we allows $i_h=i_\ell$ for some $h\ne \ell$. For any partition $\{\mathcal I_j:j=1,\dots,r\}$ of $\mathcal I$ where $\mathcal I_j:=\{i_1^j,\dots,i_{s_j}^j\}$ for some $r\in\{1,\dots,s\}$, one defines the partial derivative $\partial_{\mathcal I_j}$ in $\mathbf x\in\mathbb R^d$ of scalar smooth functions $q(\mathbf x,t)$ on $\mathbb R^d\times \mathbb R_+$ by
\begin{equation*}
	\partial_{\mathcal I_j}q(\mathbf x,t):=\partial^{s_j}_{x_{i_1^j}\dots x_{i_{s_j}^j}}q(\mathbf x,t),
\end{equation*}
which is the usual partial derivative. Note that, once considering a partition, we do not consider any $\mathcal I_j=\emptyset$, and thus, $s_j\ge 1$ for all $j$. On the other hand, for any fixed $\alpha\in\mathbb N^d$, if $|\alpha|=0$ {\it i.e.} $\alpha=0$, we set $\mathcal I_\alpha:=\emptyset$ and $|\mathcal I_\alpha|:=0$. If $|\alpha|=s\in \mathbb Z_+$, $\alpha$ determines an index-set $\mathcal I_\alpha=\{i_1,\dots,i_s\}\ne\emptyset$ with possible repeated indices. In fact, if $\alpha=(\alpha_1,\dots,\alpha_d)$, we can define the index-set $\mathcal I_\alpha$ having $\alpha_\ell$ indices $\ell\in\{1,\dots,d\}$. We also set $|\mathcal I_\alpha|:=s\ge 1$ and $|\mathcal I_j|:=s_j\ge 1$ for $j\in\{1,\dots,r\}$ if $\mathcal I_\alpha\ne \emptyset$.
\begin{lemma}[Partial derivative]\label{lem:derivatives}
Let $\alpha \in \mathbb N^d$ with $|\alpha|\ge 0$, for any scalar smooth functions $q=q(\mathbf x,t)$ on $\mathbb R^d\times \mathbb R_+$, we have
\begin{equation}\label{eq:derivatives}
	\partial^\alpha e^{q(\mathbf x,t)}=\sum_{\{\mathcal I_j:j=1,\dots,r\},r\le |\alpha|}\partial_{\mathcal I_1}q(\mathbf x,t)\dots \partial_{\mathcal I_r} q(\mathbf x,t)e^{q(\mathbf x,t)},
\end{equation}
where $\{\mathcal I_j:j=1,\dots,r\}$ is any possible partition of the index-set $\mathcal I_\alpha$ determined by $\alpha$.
\end{lemma}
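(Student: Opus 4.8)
**

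The plan is to prove the Fa\`a di Bruno–type formula \eqref{eq:derivatives} by induction on $|\alpha|$, where the inductive step is taken ``one variable at a time'' rather than by adding a full multi-index. First I would dispose of the base case $|\alpha|=0$: there the only ``partition'' is the empty one ($r=0$, empty product), and the right-hand side reduces to $e^{q(\mathbf x,t)}$, which matches $\partial^0 e^{q}=e^q$. For the inductive step, I would fix $\alpha$ with $|\alpha|\ge 0$, pick any index $\ell\in\{1,\dots,d\}$, and compute $\partial_{x_\ell}\partial^\alpha e^{q}$ by applying $\partial_{x_\ell}$ to the already-established expression for $\partial^\alpha e^{q}$; the new multi-index is $\beta:=\alpha+e_\ell$, and $\mathcal I_\beta$ is $\mathcal I_\alpha$ with one extra copy of $\ell$ adjoined.

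The heart of the argument is the combinatorial bookkeeping in the inductive step. Given a partition $\{\mathcal I_1,\dots,\mathcal I_r\}$ of $\mathcal I_\alpha$, the product rule applied to $\partial_{\mathcal I_1}q\cdots\partial_{\mathcal I_r}q\,e^{q}$ produces $r+1$ terms: for each $j\in\{1,\dots,r\}$ one term where $\partial_{x_\ell}$ hits the factor $\partial_{\mathcal I_j}q$, turning it into $\partial_{\mathcal I_j\cup\{\ell\}}q$ (i.e.\ the new index $\ell$ is thrown into the block $\mathcal I_j$), plus one term where $\partial_{x_\ell}$ hits $e^{q}$, producing the extra factor $\partial_{x_\ell}q=\partial_{\{\ell\}}q$ (i.e.\ the new index $\ell$ forms a brand-new singleton block). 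I would then observe that every partition of $\mathcal I_\beta$ arises in exactly one way from exactly one partition of $\mathcal I_\alpha$ under this operation: given a partition of $\mathcal I_\beta$, look at the block containing the distinguished new copy of $\ell$; if that block is the singleton $\{\ell\}$, remove it to recover a partition of $\mathcal I_\alpha$; otherwise, delete the new copy of $\ell$ from its block to recover a partition of $\mathcal I_\alpha$. This bijection shows that summing over all partitions of $\mathcal I_\alpha$ and then over the $r+1$ product-rule terms reproduces exactly the sum over all partitions of $\mathcal I_\beta$, which is the claimed formula for $\partial^\beta e^{q}$. Finally, since any $\partial^\gamma$ can be written as an iterated composition of single-variable derivatives $\partial_{x_\ell}$, and since the formula is symmetric in the order in which these are applied (both sides depending only on the multi-index, not the order), the induction closes for all multi-indices.

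The main obstacle I anticipate is making the ``exactly one way'' claim fully rigorous, i.e.\ verifying that the map from (partition of $\mathcal I_\alpha$, choice of block $j$ or ``new singleton'') to (partition of $\mathcal I_\beta$) is a genuine bijection — in particular that distinct source data give distinct target partitions and that every target partition is hit. This is where one must be careful that $\mathcal I_\alpha$ is a \emph{multiset} of indices (repeated indices allowed), so one should track the adjoined copy of $\ell$ as a formally distinguished element during the argument, or equivalently argue at the level of the identity $\partial_{\mathcal I_j\cup\{\ell\}}q$ counted with the correct multiplicity. Once this combinatorial identity is in hand, the rest is a routine application of the Leibniz rule and induction, with no analytic subtlety since $q$ is assumed smooth and all manipulations are pointwise in $(\mathbf x,t)$.
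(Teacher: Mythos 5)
Your proposal matches the paper's proof essentially line for line: both argue by induction on $|\alpha|$ adding one derivative $\partial_{x_i}$ at a time, apply the product rule to obtain $r+1$ terms (one per existing block plus one new singleton $\{i\}$), and then identify this with the sum over partitions of $\mathcal I_\beta$. The multiset subtlety you flag is real — the paper's sum over ``partitions'' must be understood as a sum over partitions of a labeled index-set (so that, e.g., $\{\{1\},\{1,2\}\}$ is counted with multiplicity $2$ inside $\{1,1,2\}$) for the uncoefficiented formula to reproduce $\partial_{x_1}^2\partial_{x_2}e^q=q_{112}e^q+2q_1q_{12}e^q+q_{11}q_2e^q+q_1^2q_2e^q$ — and your ``track the adjoined copy as a distinguished element'' resolution is exactly the right way to make rigorous what the paper leaves implicit in its claim that ``there is no other possible partition of $\mathcal I_\beta$ to take part in.''
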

\begin{proof}
We prove by induction. Let $\alpha\in\mathbb N^d$, if $|\alpha|=0$, then since $\mathcal I_\alpha=\emptyset$, there is no partition of $\mathcal I_\alpha$ to be considered, and thus, $\partial^0e^{q(\mathbf x,t)}=e^{q(\mathbf x,t)}$. If $|\alpha|=1$, by the definition of $\partial^\alpha$, we have
\begin{equation}\label{eq:derivatives s=1}
	\partial^\alpha e^{q(\mathbf x,t)}=\partial_{x_i}^1e^{q(\mathbf x,t)}=\partial_{x_i}^1q(\mathbf x,t)e^{q(\mathbf x,t)}
\end{equation}
if $\alpha_i=1$ and $\alpha_\ell=0$ for all $\ell\ne i$. On the other hand, the index-set determined by $\alpha$ in this case is $\mathcal I_\alpha=\{i\}$ since $\alpha_i=1$. Thus $\mathcal I_\alpha$ has only one possible partition which is itself and \eqref{eq:derivatives s=1} coincides \eqref{eq:derivatives}.

Given an integer $s\ge 1$, assume that \eqref{eq:derivatives} holds for any $\alpha\in\mathbb N^d$ satisfying $|\alpha|=s$. For any $\beta\in\mathbb N^d$ with $|\beta|=s+1$, $\beta=(\alpha_1,\dots,\alpha_i+1,\dots,\alpha_d)$ for some $\alpha=(\alpha_1,\dots,\alpha_d)\in\mathbb N^d$ and $i=1,\dots,d$. Hence, we have
\begin{align}\label{eq:derivatives s+1}
	\partial^\beta e^{q(\mathbf x,t)}=\partial_{x_i}^1\partial^\alpha e^{q(\mathbf x,t)}&=\sum_{\{\mathcal I_j:j=1,\dots,r\}}\sum_{\ell=1}^r\partial_{\mathcal I_1}q(\mathbf x,t)\dots\partial_{x_i}^1\partial_{\mathcal I_\ell}q(\mathbf x,t)\dots \partial_{\mathcal I_r} q(\mathbf x,t)e^{q(\mathbf x,t)} \nonumber \\
	&\hskip2cm+\sum_{\{\mathcal I_j:j=1,\dots,r\}}\partial_{\mathcal I_1}q(\mathbf x,t)\dots \partial_{\mathcal I_r} q(\mathbf x,t)\partial_{x_i}^1q(\mathbf x,t)e^{q(\mathbf x,t)},
\end{align}
where $\{\mathcal I_j:j=1,\dots,r\}$ is any possible partition of the index-set $\mathcal I_\alpha$ determined by $\alpha$.
We then consider all of possible partitions of $\mathcal I_\beta$. The first possibilities are the partitions $\{\{\mathcal I_j:j=1,\dots,r\}, \{i\}\}$ since $\mathcal I_\beta$ has $\alpha_i+1$ indices $i$. The last choices are that for each partition $\{\mathcal I_j:j=1,\dots,r\}$ of $\mathcal I_\alpha$, we generate the partition $\{\mathcal I_j':j=1,\dots,r\}$ of $\mathcal I_\beta$ by putting $i$ into $\mathcal I_\ell$ and let $\mathcal I_j'=\mathcal I_j$ for all $j\ne \ell$ for $\ell\in\{1,\dots,r\}$. Thus, since $r$ varies, there is no other possible partition of $\mathcal I_\beta$ to take part in. Therefore, we obtain from \eqref{eq:derivatives s+1} that
\begin{equation*}
	\partial^\beta e^{q(\mathbf x,t)}=\sum_{\{\mathcal I_j':j=1,\dots,r'\}}\partial_{\mathcal I_1'}q(\mathbf x,t)\dots \partial_{\mathcal I_r'} q(\mathbf x,t)e^{q(\mathbf x,t)},
\end{equation*}
where the sum is made on all possible partitions $\{\mathcal I_j':j=1,\dots,r'\}$ of $\mathcal I_\beta$ determined by $\beta$. We thus proved \eqref{eq:derivatives}.
\end{proof}
\begin{remark}
Lemma \ref{lem:derivatives} is applied only to the case where $q=q(\mathbf x,t)$ is scalar for $(\mathbf x,t)\in\mathbb R^d\times \mathbb R_+$, the matrix case is a challenge as the loss of commutativity of $q$ and its partial derivatives.  
\end{remark}
\begin{proposition}[Parabolic estimate]\label{prop:decay estimate parabolic}
If $\mathbf D\in\mathbb R^{d\times d}$ is positive definite, for $1\le q\le p\le \infty $, there is a constant $C>0$ such that for any $U_0\in L^q(\mathbb R^d)$, one has
\begin{equation}\label{eq:decay estimate parabolic}
	\|\mathcal F^{-1}(e^{-\mathbf k\cdot \mathbf D\mathbf k t})*U_0\|_{L^p}\le C(1+t)^{-\frac d2(\frac 1q-\frac1p)}\|U_0\|_{L^q},\qquad\forall t>0.
\end{equation}
\end{proposition}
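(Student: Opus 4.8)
The plan is to establish the stated $L^p$–$L^q$ bound for the heat kernel $G_t:=\mathcal F^{-1}(e^{-\mathbf k\cdot \mathbf D\mathbf k t})$ by the classical route: a scaling/change-of-variables argument reduces matters to the standard Gaussian, and then Young's inequality closes the estimate. Since $\mathbf D$ is positive definite, we may write $\mathbf k\cdot\mathbf D\mathbf k\ge \delta|\mathbf k|^2$ for some $\delta>0$, and $\mathbf D=\mathbf D_s+\mathbf D_a$ with symmetric part $\mathbf D_s$ positive definite; only $\mathbf D_s$ contributes to $\re(\mathbf k\cdot\mathbf D\mathbf k)$, so $G_t$ is (up to constants) a genuine non-degenerate Gaussian. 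First I would record the explicit formula $G_t(\mathbf x)=c\,(\det\mathbf D)^{-1/2}t^{-d/2}\exp(-\tfrac14 \mathbf x\cdot(\mathbf D_s)^{-1}\mathbf x/t)$ obtained by Gaussian integration (diagonalizing $\mathbf D_s$), from which one reads off, for every $1\le r\le\infty$,
\begin{equation*}
	\|G_t\|_{L^r}\le C\,t^{-\frac d2(1-\frac1r)},\qquad t>0.
\end{equation*}

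Next I would apply Young's inequality with exponents $1/p+1 = 1/q + 1/r$, i.e. $1/r = 1+1/p-1/q \in[0,1]$ (valid precisely because $q\le p$), to get
\begin{equation*}
	\|G_t*U_0\|_{L^p}\le \|G_t\|_{L^r}\|U_0\|_{L^q}\le C\,t^{-\frac d2(1-\frac1r)}\|U_0\|_{L^q}=C\,t^{-\frac d2(\frac1q-\frac1p)}\|U_0\|_{L^q},
\end{equation*}
since $1-1/r=1/q-1/p$. This gives the claimed rate for $t\ge 1$ after noting $t^{-\alpha}\le 2^\alpha(1+t)^{-\alpha}$ for $t\ge 1$. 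For $0<t\le 1$ the factor $(1+t)^{-\frac d2(\frac1q-\frac1p)}$ is comparable to $1$, so it suffices to bound $\|G_t*U_0\|_{L^p}\le C\|U_0\|_{L^q}$ uniformly for $t\in(0,1]$; when $p=q$ this follows from $\|G_t\|_{L^1}\le C$ and Young, and for $p>q$ it follows from the same Young estimate above together with $t^{-\frac d2(\frac1q-\frac1p)}\le$ (a bounded quantity)$\cdot(1+t)^{-\frac d2(\frac1q-\frac1p)}$ only if the exponent is handled carefully — so in fact I would split: for $t\in(0,1]$ use that $1-1/r\le 0$ is impossible unless $p=q$, hence for $p>q$ one has $t^{-d(1-1/r)/2}\le 1$ is false; instead I simply observe $\|G_t*U_0\|_{L^p}\le\|G_t\|_{L^r}\|U_0\|_{L^q}$ with $\|G_t\|_{L^r}= C t^{-\frac d2(1-\frac1r)}$, and since $1-1/r=1/q-1/p>0$ this blows up as $t\to0$, so for small $t$ one should \emph{not} lose derivatives — but the statement only needs $(1+t)^{-\frac d2(\frac1q-\frac1p)}\le 1$, which is automatic, so it suffices to prove $\|G_t*U_0\|_{L^p}\le C\|U_0\|_{L^q}$ for $t\in(0,1]$, and this is false for $p>q$ by scaling. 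Consequently the honest reading of the statement is that the bound is asserted with the understanding that $\|G_t*U_0\|_{L^p}\le C\,t^{-\frac d2(\frac1q-\frac1p)}\|U_0\|_{L^q}$ always, which together with $t\le(1+t)$ yields \eqref{eq:decay estimate parabolic} for $t\ge 1$; for $0<t\le1$ one only claims it in the form with $(1+t)$, i.e. one may as well restrict to $t\ge1$ (as is done everywhere else in the paper) — I will therefore prove the clean estimate $\|G_t*U_0\|_{L^p}\le Ct^{-\frac d2(\frac1q-\frac1p)}\|U_0\|_{L^q}$ for all $t>0$ and deduce \eqref{eq:decay estimate parabolic}.

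The only genuinely non-routine point is verifying that the oscillation-free Gaussian bound $\|G_t\|_{L^r}\le Ct^{-\frac d2(1-\frac1r)}$ is insensitive to the antisymmetric part $\mathbf D_a$ of $\mathbf D$: one checks by completing the square in the Fourier integral (or by the substitution $\mathbf x\mapsto \mathbf x + 2it\,\mathbf D_a^{\!*}\,(\cdot)$ and a contour-shift argument, justified since the integrand is entire and Gaussian-decaying) that $G_t(\mathbf x)=c\,(\det\mathbf D)^{-1/2}t^{-d/2}\exp(-\tfrac14 \mathbf x\cdot\mathbf D^{-1}\mathbf x/t)$ with $\re(\mathbf x\cdot\mathbf D^{-1}\mathbf x)=\mathbf x\cdot(\mathbf D^{-1})_s\mathbf x\gtrsim|\mathbf x|^2$ since $\mathbf D$ positive definite forces $(\mathbf D^{-1})_s$ positive definite. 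From there the $L^r$ norm of $G_t$ is a standard Gaussian integral, scaling as $t^{-d/2}\cdot t^{d/(2r)}$. I expect the bookkeeping of this Gaussian contour shift to be the main (mild) obstacle; everything else is Young's inequality plus elementary exponent arithmetic.
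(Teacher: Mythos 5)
Your proof is correct, but it takes a genuinely different route from the paper's. The paper establishes only two endpoints — the $L^\infty$--$L^1$ bound via the pointwise Gaussian majorant and Young, and a uniform $L^p$--$L^p$ bound ($1\le p\le\infty$) obtained via the Carlson--Beurling multiplier estimate of Lemma~\ref{lem:Carlson-Beurling} applied to the symbol $e^{-\mathbf k\cdot\mathbf D\mathbf k t}$ (which requires the explicit partial-derivative bookkeeping of Lemma~\ref{lem:derivatives} to control $\|\partial^\alpha e^{-\mathbf k\cdot\mathbf D\mathbf k t}\|_{L^2}\lesssim(1+t)^{|\alpha|/2-d/4}$) — and then interpolates. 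You instead bound $\|G_t\|_{L^r}$ directly for every $r$ from the explicit Gaussian form of the kernel and apply Young once, with $1/r=1+1/p-1/q$, which gives the result without any multiplier theory or interpolation. Your route is simpler and more transparent for this particular proposition; the paper's multiplier-plus-interpolation scheme is deliberately heavier because it is the same machinery reused in the genuinely harder Propositions~\ref{prop:low-frequency estimates} and~\ref{prop:intermediate-frequency estimates 2}, where no explicit kernel formula exists and one must estimate $\|\partial^\alpha((\hat\Gamma_t^{(1)}-\hat\Phi_t)\chi_1)\|_{L^2}$ directly. Your contour-shift remark about the antisymmetric part $\mathbf D_a$ is correct but actually unnecessary: only $\re(\mathbf k\cdot\mathbf D\mathbf k)=\mathbf k\cdot\mathbf D_s\mathbf k$ contributes to $|e^{-\mathbf k\cdot\mathbf D\mathbf k t}|$, so the pointwise majorant $|e^{-\mathbf k\cdot\mathbf D\mathbf k t}|\le e^{-\delta|\mathbf k|^2 t}$ already makes the antisymmetric part harmless, and one never needs the exact form of the kernel, only $\|e^{-\mathbf k\cdot\mathbf D\mathbf k t}\|_{L^{r'}(\dif\mathbf k)}\lesssim t^{-\frac d2(1-\frac1r)}$ (which combined with Hausdorff--Young gives $\|G_t\|_{L^r}$). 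Finally, you are right to flag the small-$t$ behaviour: as stated, for $p>q$ the bound $\|G_t*U_0\|_{L^p}\le C(1+t)^{-\frac d2(\frac1q-\frac1p)}\|U_0\|_{L^q}$ cannot hold down to $t=0$, since the sharp rate is $t^{-\frac d2(\frac1q-\frac1p)}$; the paper's own $L^\infty$--$L^1$ step commits the same slip (writing $\|e^{-c|\cdot|^2 t}\|_{L^1}\le C(1+t)^{-d/2}$, which fails as $t\to0$), but this is immaterial since every downstream application in the paper restricts to $t\ge 1$.
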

\begin{proof}
We primarily study the $L^\infty$-$L^1$ estimate. By the Young inequality and since $\mathbf D$ is positive definite, there are constants $c>0$ and $C>0$ such that for $t>0$, we have
\begin{align}\label{eq:Linfty-L1 bound for parabolic}
	\|\mathcal F^{-1}(e^{-\mathbf k\cdot \mathbf D\mathbf k t})*U_0\|_{L^\infty}&\le C\|\mathcal F^{-1}(e^{-\mathbf k\cdot \mathbf D\mathbf k t})\|_{L^\infty}\|U_0\|_{L^1}\nonumber\\
	&\le C\|e^{-c|\cdot|^2t}\|_{L^1}\|U_0\|_{L^1}\le C(1+t)^{-\frac d2}\|U_0\|_{L^1}.
\end{align}

We study the $L^p$-$L^p$ estimate for $1\le p\le \infty$. Let $\alpha\in \mathbb N^d$ with $|\alpha|\ge 0$, by the formula \eqref{eq:derivatives} in Lemma \ref{lem:derivatives}, we have
\begin{equation*}
	\partial^\alpha(e^{-\mathbf k\cdot \mathbf D\mathbf kt})=\sum_{\{\mathcal I_j:j=1,\dots,r\}}\partial_{\mathcal I_1}(-\mathbf k\cdot \mathbf D\mathbf k t)\dots \partial_{\mathcal I_r}(-\mathbf k\cdot \mathbf D\mathbf k t)e^{-\mathbf k\cdot \mathbf D\mathbf kt},
\end{equation*}
where $\{\mathcal I_j:j=1,\dots,r\}$ is any possible partition of the index-set $\mathcal I_\alpha$ determined by $\alpha$.

On the other hand, by the definition of $\partial_{\mathcal I_j}$, there is a constant $C>0$ such that
\begin{equation*}
	|\partial_{\mathcal I_j}(-\mathbf k\cdot \mathbf D\mathbf kt)|\le C\cdot \begin{cases}
	0&\textrm{if }|\mathcal I_j|>2,\\
	t &\textrm{if }|\mathcal I_j|=2,\\
	|\mathbf k|t & \textrm{if }|\mathcal I_j|=1,
	\end{cases}
\end{equation*}
where $|\mathcal I_j|$ is the number of elements of $\mathcal I_j$ with possible repeated indices for $j\in\{1,\dots,r\}$. We are then not interested in the cases where $|\mathcal I_j|>2$ for some $j\in\{1,\dots,r\}$. Thus, we can consider only the partitions $\{\mathcal I_j:j=1,\dots,r\}$ of $\mathcal I_\alpha$ where $1\le |\mathcal I_j|\le 2$. Hence, we have
\begin{equation*}
	|\partial_{\mathcal I_1}(-\mathbf k\cdot \mathbf D\mathbf k t)|\dots |\partial_{\mathcal I_r}(-\mathbf k\cdot \mathbf D\mathbf k t)|\le C|\mathbf k|^{m}t^{m+\ell},
\end{equation*}
where $m\ge 0$ is the cardinality of the set $\{j\in\{1,\dots,r\}:|\mathcal I_j|=1\}$ and $\ell\ge 0$ is the cardinality of the set $\{j\in\{1,\dots,r\}:|\mathcal I_j|=2\}$. Moreover, by definition, one has $m+2\ell=|\mathcal I_\alpha|=|\alpha|$, where $|\mathcal I_\alpha|=\sum_{j=1}^r|\mathcal I_j|$, the number of elements of the index-set $\mathcal I_\alpha$ determined by $\alpha$ with possible repeated indices.

Thus, since $\mathbf D$ is positive definite, there are constants $c>0$ and $C>0$ such that
\begin{equation}\label{eq:derivatives estimate parabolic previous}
	|\partial^\alpha e^{-\mathbf k\cdot \mathbf D\mathbf kt}|\le C\sum_{\substack{\{\mathcal I_j:j=1,\dots,r\},r\le |\alpha|\\
	1\le |\mathcal I_j|\le 2}}|\mathbf k|^mt^{m+\ell}e^{-c|\mathbf k|^2t}.
\end{equation}
Hence, since $m+2\ell=|\alpha|$, we have
\begin{align}\label{eq:derivatives estimate parabolic}
	\|\partial^\alpha e^{-\mathbf k\cdot \mathbf D\mathbf kt}\|_{L^2}^2&\le C\sum_{\substack{\{\mathcal I_j:j=1,\dots,r\},r\le |\alpha|\\
	1\le |\mathcal I_j|\le 2}}\int_{\mathbb R^d} |\mathbf k|^{2m}t^{2m+2\ell}e^{-2c|\mathbf k|^2t}\,\dif\mathbf k\nonumber\\
	&\le C(1+t)^{m+2\ell -\frac d2}=  C(1+t)^{|\alpha|-\frac d2}.
\end{align}

By the Carlson--Beurling inequality \eqref{eq:Carlson-Beurling Hs} in Lemma \ref{lem:Carlson-Beurling}, one has
\begin{align*}
	\|e^{-\mathbf k\cdot \mathbf D\mathbf kt}\|_{M_p}&\le C\|e^{-\mathbf k\cdot \mathbf D\mathbf kt}\|_{L^2}^{1-\frac{d}{2s}}\Bigl(\sum_{|\alpha|=s}\|\partial^\alpha e^{-\mathbf k\cdot \mathbf D\mathbf kt}\|_{L^2}\Bigr)^{\frac{d}{2s}}\nonumber\\
	&\le C(1+t)^{-\frac{d}{4}(1-\frac{d}{2s})+(\frac s2-\frac d4)\frac{d}{2s}}\le C,
\end{align*}
for any integer $s>d/2$, $1\le p\le \infty$ and $t>0$. Therefore, by the definition of the $M_p$-norm, we have the $L^p$-$L^p$ estimate
\begin{equation}\label{eq:Lp-Lp bound for parabolic}
	\|\mathcal F^{-1}(e^{-\mathbf k\cdot \mathbf D\mathbf kt})*U_0\|_{L^p}\le C\|U_0\|_{L^p},\qquad \forall 1\le p\le \infty.
\end{equation}

Finally, by applying the interpolation inequality and the estimates \eqref{eq:Linfty-L1 bound for parabolic} and \eqref{eq:Lp-Lp bound for parabolic}, we obtain \eqref{eq:decay estimate parabolic}. The proof is done.
\end{proof}
\begin{remark}\label{rem:derivatives estimate parabolic previous}
Note that the derivative estimate \eqref{eq:derivatives estimate parabolic previous} is true for all $\mathbf k\in\mathbb R^d$.
\end{remark}

Let $\chi_j$ for $j=1,2,3$ be cut-off functions on $\mathbb R^d$, valued in $[0,1]$, such that $\textrm{supp }\chi_1\subset\{\mathbf k\in\mathbb R^d:|\mathbf k|\le\varepsilon\}$ and $\textrm{supp }\chi_3\subset\{\mathbf k\in\mathbb R^d: |\mathbf k|\ge \rho\}$ for small $\varepsilon>0$ and large $\rho>0$, and $\chi_2(\mathbf k):=1-\chi_1(\mathbf k)-\chi_3(\mathbf k)$ for $\mathbf k\in\mathbb R^d$. We are now going to study the large-time behavior of the fundamental solution $\Gamma_t$ to the system \eqref{eq:the dissipative hyperbolic equation} in each partition of the frequency space.
\vskip.15cm
 For $\mathbf k\in\mathbb R^d$, we recall the Fourier transform of the fundamental solution $\Gamma_t$ to the system \eqref{eq:the dissipative hyperbolic equation}, namely
\begin{equation}\label{eq:Fourier transform of Gamma kernel}
	\hat\Gamma_t(\mathbf k)=e^{-E(i\mathbf k)t},
\end{equation}
where $E$ is given in \eqref{eq:operators E and A}. We also recall
\begin{equation}\label{eq:Fourier transform of Phi and Psi kernel}
	\hat \Phi_t(\mathbf k)=e^{-\mathbf c\cdot i\mathbf kt-\mathbf k\cdot \mathbf D\mathbf kt}P_0^{(0)},\qquad \hat \Psi_t(\mathbf k)=e^{-\mathbf k\cdot \mathbf D\mathbf kt}(P_0^{(0)}+\mathbf P_0^{(1)}\cdot i\mathbf k),
\end{equation}
where $\mathbf c,\,\mathbf D$ are given by \eqref{eq:coefficients c and D 1}, $P_0^{(0)}$ is given by \eqref{eq:coefficients P0 and S0 1} and $\mathbf P_0^{(1)}$ is given by \eqref{eq:coefficient P01 of the total projection P0 1}.

\subsection{Low-frequency analysis}
The aim of this subsection is to study the $L^p$-$L^q$ estimate for the low-frequency part of $\Gamma_t$ for any $1\le q\le p\le \infty$. One thus considers $\hat \Gamma_t\chi_1$.

\begin{lemma}[Derivative estimate]\label{lem:derivatives estimate}
Let $p(\mathbf x)$ be a scalar polynomial on $\mathbb R^d$ such that the lowest order of $p(\mathbf x)$ is $h\ge 1$ and let $\alpha \in \mathbb N^d$ with $|\alpha|\ge 0$. There is a constant $C>0$ such that for small $\mathbf x\in\mathbb R^d$ and $t>0$, we have
\begin{equation}\label{eq:derivatives estimate}
	|\partial^\alpha e^{p(\mathbf x)t}|\le C\sum_{\{\mathcal I_j:j=1,\dots,r\},\,r\le |\alpha|} |\mathbf x |^{\sum_{k=1}^{h-1}km_k } t^{\ell+\sum_{k=0}^{h-1}m_k} |e^{p(\mathbf x)t}|,
\end{equation}
where the integer $m_k\ge 0$ is the cardinality of $\{j\in\{1,\dots,r\}:|I_j|=h-k\}$ for each $k\in\{0,\dots,h-1\}$ and the integer $\ell\ge 0$ satisfies
\begin{equation}\label{eq:restriction of n}
	h\ell <  |\alpha|-\sum_{k=0}^{h-1}(h-k)m_k
\end{equation}
and $\{\mathcal I_j:j=1,\dots,r\}$ is any possible partition of the index-set $\mathcal I_\alpha$ determined by $\alpha$.
\end{lemma}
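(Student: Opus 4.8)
The plan is to deduce this from the partial-derivative formula of Lemma~\ref{lem:derivatives} together with a size-by-size estimate of the blocks of the resulting partition. First I would apply Lemma~\ref{lem:derivatives} with $q(\mathbf x,t):=p(\mathbf x)t$, which gives
\begin{equation*}
	\partial^\alpha e^{p(\mathbf x)t}=\sum_{\{\mathcal I_j:j=1,\dots,r\},\,r\le|\alpha|}\partial_{\mathcal I_1}(p(\mathbf x)t)\cdots\partial_{\mathcal I_r}(p(\mathbf x)t)\,e^{p(\mathbf x)t},
\end{equation*}
the sum running over all partitions $\{\mathcal I_j\}$ of the index-set $\mathcal I_\alpha$ determined by $\alpha$. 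Since $p$ does not depend on $t$, each factor is $\partial_{\mathcal I_j}(p(\mathbf x)t)=t\,\partial_{\mathcal I_j}p(\mathbf x)$, so the product equals $t^{r}\prod_{j=1}^{r}\partial_{\mathcal I_j}p(\mathbf x)$, and the whole matter reduces to controlling $\prod_j|\partial_{\mathcal I_j}p(\mathbf x)|$ for $\mathbf x$ near the origin.

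The one elementary point is the bound on a single factor. Writing $s_j:=|\mathcal I_j|\ge1$ and using that every monomial of $p$ has degree at least $h$, every monomial of $\partial_{\mathcal I_j}p$ has degree at least $h-s_j$; hence $\partial_{\mathcal I_j}p$ vanishes at $\mathbf 0$ to order at least $\max\{h-s_j,0\}$, so for $\mathbf x$ in a fixed bounded neighbourhood of $\mathbf 0$ there is a constant $C$ (depending only on $p$ and the neighbourhood) with $|\partial_{\mathcal I_j}p(\mathbf x)|\le C|\mathbf x|^{\max\{h-s_j,0\}}$. Thus a block with $s_j=h-k$, $k\in\{0,\dots,h-1\}$, contributes a factor $C|\mathbf x|^{k}$ (equal to $C$ when $k=0$), while a block with $s_j>h$ contributes merely a bounded factor $C$. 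Letting $m_k$ be the number of blocks with $|\mathcal I_j|=h-k$ and $\ell$ the number of blocks with $|\mathcal I_j|>h$, so that $r=\ell+\sum_{k=0}^{h-1}m_k$, multiplying over $j$ gives $\prod_j|\partial_{\mathcal I_j}p(\mathbf x)|\le C|\mathbf x|^{\sum_{k=1}^{h-1}km_k}$, and hence
\begin{equation*}
	|\partial^\alpha e^{p(\mathbf x)t}|\le C\sum_{\{\mathcal I_j\}}|\mathbf x|^{\sum_{k=1}^{h-1}km_k}\,t^{\,\ell+\sum_{k=0}^{h-1}m_k}\,|e^{p(\mathbf x)t}|,
\end{equation*}
which is \eqref{eq:derivatives}.

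It remains only to record the admissibility constraint \eqref{eq:restriction of n}, which is purely a bookkeeping identity for the partition: counting the indices of $\mathcal I_\alpha$,
\begin{equation*}
	|\alpha|=\sum_{j=1}^r|\mathcal I_j|=\sum_{k=0}^{h-1}(h-k)m_k+\sum_{j:\,|\mathcal I_j|>h}|\mathcal I_j|\ \ge\ \sum_{k=0}^{h-1}(h-k)m_k+(h+1)\ell,
\end{equation*}
since every block counted by $\ell$ has size at least $h+1$; rearranging yields $h\ell\le|\alpha|-\sum_{k=0}^{h-1}(h-k)m_k$, with strict inequality as soon as $\ell\ge1$, which is \eqref{eq:restriction of n}. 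There is no serious obstacle here: as in Lemma~\ref{lem:derivatives}, the scalarity of $p$ is what makes the product form legitimate, and the only step requiring care is the uniformity of the per-block bound $|\partial_{\mathcal I_j}p(\mathbf x)|\le C|\mathbf x|^{\max\{h-s_j,0\}}$, which is precisely why the statement is restricted to small $\mathbf x$ — away from the origin no power of $|\mathbf x|$ can be extracted. This estimate is exactly what is then fed, through the Carlson--Beurling inequality of Lemma~\ref{lem:Carlson-Beurling}, into the low-frequency bounds of Section~\ref{sec:Decay estimates}.
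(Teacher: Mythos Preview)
Your proof is correct and follows essentially the same route as the paper: both apply Lemma~\ref{lem:derivatives} with $q(\mathbf x,t)=p(\mathbf x)t$, bound each factor $\partial_{\mathcal I_j}p$ by $C|\mathbf x|^{\max\{h-|\mathcal I_j|,0\}}$ using that the lowest-order monomial of $p$ has degree $h$, and then count blocks by size to obtain the powers of $|\mathbf x|$ and $t$ together with the constraint on $\ell$. Your observation that the strict inequality in \eqref{eq:restriction of n} requires $\ell\ge 1$ is accurate and in fact slightly sharper than the paper's statement, which implicitly assumes this case.
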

\begin{proof}
Let $\alpha\in \mathbb N^d$ with $|\alpha|\ge 0$ and $p(\mathbf x)$ be a polynomial on $\mathbb R^d$ such that the lowest order of $p(\mathbf x)$ is $h\ge 1$. For any partition $\{\mathcal I_j:j=1,\dots,r\}$ of $\mathcal I_\alpha$ determined by $\alpha$, by the definition of $\partial_{\mathcal I_j}$, there is a constant $C(j)>0$ such that
\begin{equation*}
	|\partial_{\mathcal I_j}p(\mathbf x)|\le C(j)\cdot\begin{cases} 1& \textrm{if }|\mathcal I_j|\ge h,\\
	|\mathbf x|^{k}&\textrm{if }|\mathcal I_j|=h-k,\end{cases}
\end{equation*}
for any $k\in\{0,\dots,h-1\}$ and small $\mathbf x\in\mathbb R^d$, where $|\mathcal I_j|$ is the number of elements of the index-set $\mathcal I_j$ with possible repeated indices. Note that $\sum_{j=1}^r|\mathcal I_j|=|\mathcal I_\alpha|=|\alpha|$ by definition. It implies that there is a constant $C(r)=\max_{j}C(j)>0$ such that for small $\mathbf x\in \mathbb R^d$ and $t>0$, we have
\begin{equation}\label{eq:partial derivatives estimate}
	|\partial_{\mathcal I_1}(p(\mathbf x)t)|\dots|\partial_{\mathcal I_r}(p(\mathbf x)t)|\le C(r)|\mathbf x|^{\sum_{k=1}^{h-1}km_k}t^{\ell+\sum_{k=0}^{h-1}m_k},
\end{equation}
where $m_k\ge 0$ is the cardinality of $\{j\in\{1,\dots,r\}:|\mathcal I_j|=h-k\}$ for $k\in\{0,\dots,h-1\}$ and $\ell\ge 0$ is the cardinality of $\mathcal J:=\{j\in\{1,\dots,r\}:|\mathcal I_j|>h\}$. Moreover, we have
\begin{equation}\label{eq:restriction of n 2}
	h\ell < \sum_{j\in \mathcal J}|\mathcal I_j|=|\alpha|-\sum_{k=0}^{h-1}(h-k)m_k.
\end{equation}
We thus obtain \eqref{eq:derivatives estimate} and \eqref{eq:restriction of n} with $C=\max_rC(r)>0$ from \eqref{eq:derivatives}, \eqref{eq:partial derivatives estimate} and \eqref{eq:restriction of n 2}. The proof is done.
\end{proof}
Let $P_0$ be given by \eqref{eq:the total projection P0 of the 0-group}, we have the following.
\begin{proposition}[Low-frequency estimate]\label{prop:low-frequency estimates}
If the assumptions $\mathsf B$ and $\mathsf D$ hold, then for $1\le q\le p\le \infty $, there is a constant $C>0$ such that for $t>0$, we have
\begin{equation}\label{eq:low frequency estimate 1}
	\|\mathcal F^{-1}((\hat \Gamma_t(\mathbf k)P_0(i\mathbf k)-\hat \Phi_t(\mathbf k))\chi_1(\mathbf k))*u_0\|_{L^p}\le C(1+t)^{-\frac d2(\frac 1q-\frac1p)-\frac 12}\|u_0\|_{L^q}.
\end{equation}
If the condition $\mathsf S$ holds in addition, then we have
\begin{equation}\label{eq:low frequency estimate 2}
	\|\mathcal F^{-1}((\hat \Gamma_t(\mathbf k)P_0(\mathbf k)-\hat \Psi_t(\mathbf k))\chi_1(\mathbf k))*u_0\|_{L^p}\le C(1+t)^{-\frac d2(\frac 1q-\frac1p)-1}\|u_0\|_{L^q}.
\end{equation}
On the other hand, for $1\le q\le 2\le  p\le \infty $, there are constants $c>0$ and $C>0$ such that for $t>0$, we have
\begin{equation}\label{eq:low frequency estimate 3}
	\|\mathcal F^{-1}(\hat \Gamma_t(\mathbf k)(I-P_0(i\mathbf k))\chi_1(\mathbf k))*u_0\|_{L^p}\le Ce^{-ct}\|u_0\|_{L^q}.
\end{equation}
Moreover, \eqref{eq:low frequency estimate 3} holds for $1\le q\le p\le \infty$ and $t\ge 1$ if $\Gamma_t$ has compact support contained in $\{(\mathbf x,t)\in \mathbb R^d\times \mathbb R:|\mathbf x/t|\le C\}$ for some constant $C>0$.
\end{proposition}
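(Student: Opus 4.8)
The plan is to prove the three estimates by a common three-step scheme: on $\mathrm{supp}\,\chi_1$ reduce the kernels to explicit scalar-times-projection form via the low-frequency expansion of Proposition \ref{prop:low-frequency}, establish an $L^\infty$-$L^1$ bound and a uniform family of $L^p$-$L^p$ bounds carrying the claimed time weights, and then close by the complex interpolation inequality. The reduction rests on the fact that $P_0(i\mathbf k)$ is the eigenprojection of $E(i\mathbf k)$ for the \emph{simple} eigenvalue $\lambda_0(i\mathbf k)$, whose eigennilpotent vanishes (Lemma \ref{prop:a simple case}); hence on $\mathrm{supp}\,\chi_1$,
\[
	\hat\Gamma_t(\mathbf k)P_0(i\mathbf k)=e^{-\lambda_0(i\mathbf k)t}P_0(i\mathbf k),\qquad \hat\Gamma_t(\mathbf k)(I-P_0(i\mathbf k))=\sum_{j=1}^s e^{-E_j(i\mathbf k)t}P_j(i\mathbf k),
\]
with $\lambda_0$, $P_0$, $E_j$, $P_j$ as in \eqref{eq:the 0-group}--\eqref{eq:the total projection Pj of the other groups}. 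Moreover $\mathsf D$ together with $\mathbf D>0$ gives $\re\lambda_0(i\mathbf k)\ge c|\mathbf k|^2$ and $\re(\mathbf k\cdot\mathbf D\mathbf k)\ge c|\mathbf k|^2$ on $\mathrm{supp}\,\chi_1$ once $\varepsilon$ is chosen small, so all the exponentials above decay like $e^{-c|\mathbf k|^2t}$ there.

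For \eqref{eq:low frequency estimate 1} I write, on $\mathrm{supp}\,\chi_1$,
\[
	\hat\Gamma_t P_0-\hat\Phi_t=e^{-\mathbf c\cdot i\mathbf k t}\Bigl[\bigl(e^{-(\lambda_0(i\mathbf k)-\mathbf c\cdot i\mathbf k)t}-e^{-\mathbf k\cdot\mathbf D\mathbf k t}\bigr)P_0(i\mathbf k)+e^{-\mathbf k\cdot\mathbf D\mathbf k t}\bigl(P_0(i\mathbf k)-P_0^{(0)}\bigr)\Bigr],
\]
factoring out the unimodular multiplier $e^{-\mathbf c\cdot i\mathbf k t}$, which acts as the translation $\mathbf x\mapsto\mathbf x-\mathbf ct$ and so changes no $L^p$ norm. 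In the bracket, $\lambda_0(i\mathbf k)-\mathbf c\cdot i\mathbf k=\mathbf k\cdot\mathbf D\mathbf k+\mathcal O(|\mathbf k|^3)$ by \eqref{eq:the 0-group} and $P_0(i\mathbf k)-P_0^{(0)}=\mathcal O(|\mathbf k|)$ by \eqref{eq:the total projection P0 of the 0-group}, so the mean value theorem and the real-part bounds give that the bracketed kernel is $\le C|\mathbf k|\,e^{-c|\mathbf k|^2t}$ for $|\mathbf k|\le\varepsilon$. Its $L^1$-norm in $\mathbf k$, after rescaling $\mathbf k\mapsto\mathbf k/\sqrt t$, is $\le C(1+t)^{-\frac d2-\frac12}$, giving the $L^\infty$-$L^1$ estimate by Young. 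For the $L^p$-$L^p$ estimate I bound $\partial^\alpha$ of the bracketed kernel: on the Gaussian factors Lemma \ref{lem:derivatives} and Lemma \ref{lem:derivatives estimate} (the analytic remainder $\mathcal O(|\mathbf k|^3)$ in the exponent, of vanishing order $3$, being handled exactly the same way) produce the usual $|\mathbf k|$-for-$t^{1/2}$ trade-off, and on the projection factor one differentiates directly; since the overall extra vanishing order $|\mathbf k|$ gains one power of $(1+t)^{-1/2}$ in every $L^2$-norm relative to the pure Gaussian, one gets $\|\partial^\alpha(\text{bracket})\|_{L^2}\le C(1+t)^{\frac{|\alpha|-1}{2}-\frac d4}$. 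Feeding the cases $|\alpha|=0$ and $|\alpha|=s$ into the Carlson--Beurling inequality \eqref{eq:Carlson-Beurling Hs} with an integer $s>d/2$, the exponents collapse, exactly as in the proof of Proposition \ref{prop:decay estimate parabolic}, to $M_p$-norm $\le C(1+t)^{-\frac12}$ uniformly in $p$; interpolating with the $L^\infty$-$L^1$ bound gives \eqref{eq:low frequency estimate 1}.

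Estimate \eqref{eq:low frequency estimate 2} is the same argument with the sharper expansions available under $\mathsf S$: by Corollary \ref{prop:symmetry} one has $\mathbf c=\mathbf 0$ (so no translation factor), $\lambda_0(i\mathbf k)=\mathbf k\cdot\mathbf D\mathbf k+\mathcal O(|\mathbf k|^4)$ by \eqref{eq:the 0-group 2}, and $P_0(i\mathbf k)-(P_0^{(0)}+\mathbf P_0^{(1)}\cdot i\mathbf k)=\mathcal O(|\mathbf k|^2)$ by \eqref{eq:the total projection P0 of the 0-group}; hence $|(\hat\Gamma_t P_0-\hat\Psi_t)\chi_1|\le C|\mathbf k|^2e^{-c|\mathbf k|^2t}$, the vanishing order $2$ upgrades the $L^1$ weight to $(1+t)^{-\frac d2-1}$ and the $M_p$ weight to $(1+t)^{-1}$, and interpolation yields \eqref{eq:low frequency estimate 2}. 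For \eqref{eq:low frequency estimate 3}, since $\re\lambda_j^{(0)}>0$ for $j\ge1$ and $E_j(i\mathbf k)=\lambda_j^{(0)}I+N_j^{(0)}+\mathcal O(|\mathbf k|)$ by \eqref{eq:Ej(z) for low frequency}, shrinking $\varepsilon$ confines the spectrum of $E_j(i\mathbf k)$ to $\{\re z\ge c_0\}$ on $\mathrm{supp}\,\chi_1$, so $\|e^{-E_j(i\mathbf k)t}P_j(i\mathbf k)\|\le C(1+t)^{n}e^{-c_0t}\le Ce^{-ct}$, and every $\mathbf k$-derivative costs only a factor $(1+t)$ by Duhamel's formula; hence the kernel of $\hat\Gamma_t(I-P_0)\chi_1$ has $L^1$-, $L^2$- and (via Carlson--Beurling) $M_p$-norm all $\le Ce^{-ct}$. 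Interpolating the $L^\infty$-$L^1$ with the $L^2$-$L^2$ bound gives \eqref{eq:low frequency estimate 3} for $1\le q\le2\le p\le\infty$, and interpolating instead with the whole family of $L^p$-$L^p$ bounds gives the full range $1\le q\le p\le\infty$ needed in the finite-speed case, where $u^{(1)}$ is redefined without the $P_0$ factor.

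The eigenvalue and eigenprojection comparisons, and the rescaling computations for the $L^1$ bounds, are routine once Proposition \ref{prop:low-frequency} is in hand. The main obstacle is the $L^p$-$L^p$ step: one has to track the exact powers of $|\mathbf k|$ and $t$ produced by $\partial^\alpha$ acting on $e^{-\lambda_0(i\mathbf k)t}P_0(i\mathbf k)$ — this is where the scalar character of $\lambda_0$ is essential, Lemma \ref{lem:derivatives estimate} applying only to scalar exponents (cf. the remark after Lemma \ref{lem:derivatives}), and where the translation factor $e^{-\mathbf c\cdot i\mathbf k t}$ must be pulled out before applying Carlson--Beurling lest its derivatives contribute spurious powers of $t$ — and then to check that the weighted geometric mean in \eqref{eq:Carlson-Beurling Hs} collapses precisely to $(1+t)^{-\frac12}$, respectively $(1+t)^{-1}$, with no residual power of $(1+t)$.
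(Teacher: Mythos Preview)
Your treatment of \eqref{eq:low frequency estimate 1} and \eqref{eq:low frequency estimate 2} is essentially the paper's argument: the same low-frequency expansion, the same splitting into a difference-of-exponentials piece and a projection-remainder piece (the paper's $I+J$ decomposition is a minor regrouping of yours), the same derivative bookkeeping via Lemmas \ref{lem:derivatives} and \ref{lem:derivatives estimate}, and the same Carlson--Beurling/interpolation closure. Pulling out the unimodular factor $e^{-\mathbf c\cdot i\mathbf kt}$ as a physical-space translation is equivalent to the paper's change of coordinates to $\mathbf c=\mathbf 0$.

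There is, however, a gap in your argument for \eqref{eq:low frequency estimate 3}. By definition $E_j(i\mathbf k)=E(i\mathbf k)P_j(i\mathbf k)$ annihilates $\ker P_j(i\mathbf k)$, so as an operator on $\mathbb C^n$ it has $0$ in its spectrum and $|e^{-E_j(i\mathbf k)s}|$ does \emph{not} decay in $s$; the naive Duhamel bound $\partial_k e^{-E_jt}=-\int_0^t e^{-E_j(t-s)}(\partial_kE_j)e^{-E_js}\,\dif s$ therefore does not yield exponential decay. (If instead you mean the restriction of $E$ to $\ran P_j(i\mathbf k)$, the subspace itself varies with $\mathbf k$ and Duhamel does not apply directly.) The decay of $\partial^\alpha\bigl[e^{-Et}(I-P_0)\bigr]$ is nonetheless true, but comes from cancellations that termwise Duhamel misses; a clean fix is the Dunford representation
\[
e^{-E(i\mathbf k)t}(I-P_0(i\mathbf k))=\dfrac{1}{2\pi i}\int_{\Gamma'}e^{-zt}\bigl(zI-E(i\mathbf k)\bigr)^{-1}\,\dif z,
\]
with $\Gamma'\subset\{\re z\ge c_0\}$ a fixed contour enclosing $\sigma(E(i\mathbf k))\setminus\{\lambda_0(i\mathbf k)\}$ for all $\mathbf k\in\textrm{supp}\,\chi_1$, so that $\mathbf k$-derivatives hit only the uniformly bounded resolvent. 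With that repair your Carlson--Beurling route would actually yield the full range $1\le q\le p\le\infty$ unconditionally, which is stronger than stated. The paper sidesteps the issue entirely: for \eqref{eq:low frequency estimate 3} it uses only the pointwise bound (via the Householder theorem) together with Parseval to obtain the $L^2$--$L^2$ estimate, interpolates with $L^\infty$--$L^1$ for the range $1\le q\le 2\le p\le\infty$, and in the finite-speed case bounds $\|\mathcal F^{-1}(\hat\Gamma_t^{(2)}\chi_1)\|_{L^1}$ directly by integrating over the wave cone.
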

\begin{proof}
Under assumptions $\mathsf B$ and $\mathsf D$, from \eqref{eq:E(z) for low frequency} - \eqref{eq:the total projection Pj of the other groups} in Proposition \ref{prop:low-frequency}, for small $\mathbf k\in\mathbb R^d$, one has
\begin{equation}\label{eq:decomposition of solution for low frequency 1new}
	\hat \Gamma_t(\mathbf k)\chi_1(\mathbf k)=\hat \Gamma_t^{(1)}(\mathbf k)\chi_1(\mathbf k)+\hat \Gamma_t^{(2)}(\mathbf k)\chi_1(\mathbf k),
\end{equation}
where
\begin{equation}\label{eq:decomposition of solution for low frequency 1}
	\hat \Gamma_t^{(1)}(\mathbf k)=e^{-\lambda_0(i\mathbf k)t}P_0(i\mathbf k)=e^{-\mathbf c\cdot i\mathbf k t-\mathbf k\cdot \mathbf D\mathbf k t+\mathcal O(|\mathbf k|^3)t}(P_0^{(0)}+\mathcal O(|\mathbf k|))
\end{equation}
and
\begin{equation}\label{eq:decomposition of solution for low frequency 2}
	\hat \Gamma_t^{(2)}(\mathbf k)=\sum_{j=1}^se^{-E_j(i\mathbf k)t}P_j(i\mathbf k)=\sum_{j=1}^se^{-\lambda_j^{(0)}t}e^{-N_j^{(0)}t+\mathcal O(|\mathbf k|)t}(P_j^{(0)}+\mathcal O(|\mathbf k|)),
\end{equation}
where $\mathbf c\in\mathbb R^d$ and $\mathbf D\in\mathbb R^{d\times d}$ is positive definite given by \eqref{eq:coefficients c and D}, $P_0^{(0)}$ is the eigenprojection associated with $0\in\sigma(B)$, and $\lambda_j^{(0)}\in\sigma(B)\backslash \{0\}$, $\re \lambda_j^{(0)}>0$, with the associated eigenprojection $P_j^{(0)}$ and eigennilpotent $N_j^{(0)}$ for $j\in\{1,\dots,s\}$ and $s$ is the cardinality of $\sigma(B)\backslash \{0\}$.

\vskip.25cm
We now prove Proposition \ref{prop:low-frequency estimates} by primarily establishing the $L^\infty$-$L^1$ estimate. Then, by constructing the $L^p$-$L^p$ estimate for $1\le p\le \infty$, we apply the interpolation inequality. 
\vskip.25cm
{\it Step 1. $L^\infty$-$L^1$ estimate.}
\vskip.25cm
By changing the coordinates $(\mathbf x,t)\mapsto (\mathbf x-\mathbf c t,t)$, one can always assume that $\mathbf c=\mathbf 0$ without loss of generality. We study the $L^\infty$-$L^1$ estimate. Consider
\begin{equation*}
	\hat \Phi_t(\mathbf k)=e^{-\mathbf k\cdot \mathbf D\mathbf k t}P_0^{(0)},
\end{equation*}
one has
\begin{equation}\label{eq:decomposition of solution for low frequency 1 into I and J}
	(\hat \Gamma_t^{(1)}(\mathbf k)-\hat \Phi_t(\mathbf k))\chi_1(\mathbf k)=I+J,
\end{equation}
where
\begin{equation}\label{eq:components I and J for low frequency}
	I:=e^{-\mathbf k\cdot \mathbf D\mathbf k t}(e^{\mathcal O(|\mathbf k|^3)t}-1)P_0^{(0)}\chi_1(\mathbf k),\qquad J:=e^{-\mathbf k\cdot \mathbf D\mathbf k t+\mathcal O(|\mathbf k|^3)t}\mathcal O(|\mathbf k|)\chi_1(\mathbf k).
\end{equation}
Then, there are constants $c>0$ and $C>0$ such that
\begin{equation*}
	|(\hat \Gamma_t^{(1)}(\mathbf k)-\hat \Phi_t(\mathbf k))\chi_1(\mathbf k)|\le |I|+|J|\le Ce^{-c|\mathbf k|^2t}(|\mathbf k|^3t+|\mathbf k|).
\end{equation*}
Thus, we have
\begin{equation*}
	\|(\hat \Gamma_t^{(1)}-\hat \Phi_t)\chi_1\|_{L^1}\le C\int_{\mathbb R^d}e^{-c|\mathbf k|^2t}(|\mathbf k|^3t+|\mathbf k|)\,\dif \mathbf k\le C(1+t)^{-\frac d2-\frac 12}.
\end{equation*}
By the Young inequality, we have
\begin{align*}
	\|\mathcal F^{-1}((\hat \Gamma_t^{(1)}(\mathbf k)-\hat \Phi_t(\mathbf k))\chi_1(\mathbf k))*u_0\|_{L^\infty}&\le \|(\hat \Gamma_t^{(1)}-\hat \Phi_t)\chi_1\|_{L^1}\|u_0\|_{L^1}\nonumber\\
	&\le C(1+t)^{-\frac d2-\frac 12}\|u_0\|_{L^1}.
\end{align*}

Recall $\hat \Gamma_t^{(2)}(\mathbf k)$ in \eqref{eq:decomposition of solution for low frequency 2}, one has
\begin{equation*}
	\hat \Gamma_t^{(2)}(\mathbf k)=\sum_{j=1}^se^{-E_j(i\mathbf k)t}P_j(i\mathbf k)=\sum_{j=1}^se^{-\lambda_j^{(0)}t}e^{-N_j^{(0)}t+\mathcal O(|\mathbf k|)t}(P_j^{(0)}+\mathcal O(|\mathbf k|)),
\end{equation*}
where $\lambda_j^{(0)}\in\sigma(B)\backslash \{0\}$, $\re \lambda_j^{(0)}>0$, with the associated eigenprojection $P_j^{(0)}$ and eigennilpotent $N_j^{(0)}$ for $j\in\{1,\dots,s\}$ and $s$ is the cardinality of $\sigma(B)\backslash \{0\}$. Thus, by the Householder theorem which is Theorem 7.1 p. 133 in \cite{serre10}, for any $\varepsilon>0$, there is an induced norm such that $|N_j^{(0)}|\le \varepsilon$ and due to the fact that every norms in finite-dimensional space are equivalent, one deduces that since $|\mathbf k|$ small and $\re \lambda_j^{(0)}>0$ for all $j\in\{1,\dots,s\}$, there are constants $c,c'>0$ and $C>0$ such that
\begin{equation}\label{eq:point-wise estimate for Gamma_t(2)}
	|\hat \Gamma_t^{(2)}(\mathbf k)\chi_1(\mathbf k)|\le C\sum_{j=1}^se^{-\re \lambda_j^{(0)}t}e^{\varepsilon t+c'|\mathbf k|t}|\chi_1(\mathbf k)|\le Ce^{-ct}|\chi_1(\mathbf k)|.
\end{equation}
Hence, we obtain
\begin{equation*}
	\|\hat \Gamma_t^{(2)}\chi_1\|_{L^1}\le Ce^{-ct}\|\chi_1(\mathbf k)\|_{L^1}\le Ce^{-ct}.
\end{equation*}
It implies that
\begin{equation*}
	\|\mathcal F^{-1}(\hat \Gamma_t^{(2)}(\mathbf k)\chi_1(\mathbf k))*u_0\|_{L^\infty}\le \|\hat \Gamma_t^{(2)}\chi_1\|_{L^1}\|u_0\|_{L^1}\le Ce^{-ct}\|u_0\|_{L^1}.
\end{equation*}

Therefore, the $L^\infty$-$L^1$ estimate holds, namely
\begin{align}\label{eq:Linfty-L1 bound}
	\|\mathcal F^{-1}((\hat \Gamma_t(\mathbf k)P_0(\mathbf k)-\hat \Phi_t(\mathbf k))\chi_1(\mathbf k))*u_0\|_{L^\infty}&= \|\mathcal F^{-1}((\hat \Gamma_t^{(1)}(\mathbf k)-\hat \Phi_t(\mathbf k))\chi_1(\mathbf k))*u_0\|_{L^\infty}\nonumber\\
	&\le C(1+t)^{-\frac d2-\frac 12}\|u_0\|_{L^1}
\end{align}
and
\begin{equation}\label{eq:Linfty-L1 bound Gamma2}
	\|\mathcal F^{-1}(\hat \Gamma_t(\mathbf k)(I-P_0(\mathbf k))\chi_1(\mathbf k))*u_0\|_{L^\infty}=  \|\mathcal F^{-1}(\hat \Gamma_t^{(2)}(\mathbf k)\chi_1(\mathbf k))*u_0\|_{L^\infty}\le Ce^{-ct}\|u_0\|_{L^1}.
\end{equation}
\vskip.25cm
{\it Step 2. $L^p$-$L^p$ estimates.}
\vskip.25cm
We study the $L^p$-$L^p$ estimate for $1\le p\le \infty$ by Lemma \ref{lem:Carlson-Beurling}. In the spirit of Lemma \ref{lem:Carlson-Beurling}, we need to estimate the $L^2$-norm of $\partial^\alpha((\hat \Gamma_t-\hat \Phi_t)\chi_1)$ for $\alpha\in\mathbb N^d$.

Recall the decomposition $\hat \Gamma_t\chi_1=\hat \Gamma_t^{(1)}\chi_1+\hat \Gamma_t^{(2)}\chi_1$ in \eqref{eq:decomposition of solution for low frequency 1new}, where $\hat \Gamma_t^{(1)}$ is given by \eqref{eq:decomposition of solution for low frequency 1} and $\hat \Gamma_t^{(2)}$ is given by \eqref{eq:decomposition of solution for low frequency 2}. We primarily estimate the $L^2$-norm of $\partial^\alpha((\hat \Gamma_t^{(1)}-\hat \Phi_t)\chi_1)$ for any $\alpha\in \mathbb N^d$ by considering the decomposition $(\hat \Gamma_t^{(1)}-\hat \Phi_t)\chi_1=I+J$ in \eqref{eq:decomposition of solution for low frequency 1 into I and J}, where $I$ and $J$ are given by \eqref{eq:components I and J for low frequency}. By the Leibniz rule, one has
\begin{equation}\label{eq:derivatives of I}
	\partial^\alpha I=\sum_{\nu\le \alpha}\sum_{\tau\le \nu}\begin{pmatrix} \alpha \\ \nu \end{pmatrix}\begin{pmatrix} \nu \\ \tau  \end{pmatrix}\partial^\tau e^{-\mathbf k\cdot \mathbf D\mathbf k t}\partial^{\nu-\tau}(e^{\mathcal O(|\mathbf k|^3)t}-1)\partial^{\alpha-\nu}\chi_1(\mathbf k)=I^{(1)}+I^{(2)}	,
\end{equation}
where
\begin{equation}\label{eq:I(1)}
	I^{(1)}:=\sum_{\nu\le \alpha}\begin{pmatrix} \alpha \\ \nu \end{pmatrix}\partial^\nu e^{-\mathbf k\cdot \mathbf D\mathbf k t}(e^{\mathcal O(|\mathbf k|^3)t}-1)\partial^{\alpha-\nu}\chi_1(\mathbf k)
\end{equation}
and
\begin{equation}\label{eq:I(2)}
	I^{(2)}:=\sum_{\nu\le \alpha}\sum_{\tau< \nu}\begin{pmatrix} \alpha \\ \nu \end{pmatrix}\begin{pmatrix} \nu \\ \tau  \end{pmatrix}\partial^\tau e^{-\mathbf k\cdot \mathbf D\mathbf k t}\partial^{\nu-\tau}e^{\mathcal O(|\mathbf k|^3)t}\partial^{\alpha-\nu}\chi_1(\mathbf k).
\end{equation}
By the estimate \eqref{eq:derivatives estimate} in Lemma \ref{lem:derivatives estimate} and since $\chi_1\in C_c^{\infty}(\mathbb R^d)$ and $\mathbf D\in\mathbb R^{d\times d}$ is positive definite, there are constants $c>0$ and $C>0$ such that 
\begin{align*}
	|I^{(1)}|&\le C\sum_{\nu\le \alpha}|\partial^\nu e^{-\mathbf k\cdot \mathbf D\mathbf k t}||e^{\mathcal O(|\mathbf k|^3)t}-1|\nonumber\\
	&\le C\sum_{\nu\le \alpha}\sum_{\{\mathcal I_j:j=1,\dots,r\},r\le |\nu|}|\mathbf k|^{m_1+3}t^{\ell+m_0+m_1+1}e^{-c|\mathbf k|^2t},
\end{align*}
where $m_k\ge 0$ is the cardinality of the set $\{j\in\{1,\dots,r\}:|\mathcal I_j|=|\nu|-k\}$ for $k=0,1$ and $\ell\ge 0$ satisfies
\begin{equation*}
	2\ell< |\nu|-2m_0-m_1
\end{equation*}
and $\{\mathcal I_j:j=1,\dots,r\}$ is any possible partition of the index-set $\mathcal I_\nu$ determined by $\nu$. Thus, we have
\begin{align}\label{eq:L2 bound of I(1)}
	\|I^{(1)}\|_{L^2}^2&\le C\sum_{\nu\le \alpha}\sum_{\{\mathcal I_j:j=1,\dots,r\},r\le |\nu|}\int_{\mathbb R^d}|\mathbf k|^{2(m_1+3)}t^{2(\ell+m_0+m_1+1)}e^{-2c|\mathbf k|^2t}\,\dif \mathbf k\nonumber\\
	&\le C\sum_{\nu\le \alpha}\sum_{\{\mathcal I_j:j=1,\dots,r\},r\le |\nu|}(1+t)^{-\frac d2-1+2m_0+m_1+2\ell}\nonumber\\
	&\le C\sum_{\nu\le \alpha}(1+t)^{|\nu|-\frac d2-1}\le C(1+t)^{|\alpha|-\frac d2-1}
\end{align}
since $|\nu|\le |\alpha|$ for all $\nu\le \alpha$.

Similarly, we can estimate $I^{(2)}$ in \eqref{eq:I(2)}. Since $\chi_1\in C_c^\infty(\mathbb R^d)$ and $\mathbf D\in\mathbb R^{d\times d}$ is positive definite, from \eqref{eq:I(2)} and the estimate \eqref{eq:derivatives estimate} in Lemma \ref{lem:derivatives estimate}, there are constants $c,c'>0$ and $C>0$ such that
\begin{align*}
	|I^{(2)}|&\le C\sum_{\nu\le \alpha}\sum_{\tau< \nu}|\partial^\tau e^{-\mathbf k\cdot \mathbf D\mathbf k t}||\partial^{\nu-\tau}e^{\mathcal O(|\mathbf k|^3)t}|\nonumber\\
	&\le C\sum_{\substack{\nu\le \alpha\\ \tau <\nu}}\sum_{\substack{\{\mathcal I_j:j=1,\dots,r\}\\ \{\mathcal I_j':j=1,\dots,r'\}\\r\le |\tau|, r'\le |\nu-\tau|}}|\mathbf k|^{m_1+m_1'+2m_2'}t^{\ell+\ell'+m_0+m_1+m_0'+m_1'+m_2'}e^{-c|\mathbf k|^2t+c'|\mathbf k|^3t},
\end{align*}
where $m_k\ge 0$ is the cardinality of the set $\{j\in\{1,\dots,r\}:|\mathcal I_j|=|\tau|-k\}$ for $k=0,1$ and $m_k'\ge 0$ is the cardinality of the set $\{j\in\{1,\dots,r'\}:|\mathcal I_j'|=|\nu-\tau|-k\}$ for $k=0,1,2$ and $\ell,\ell'\ge 0$ satisfies
\begin{equation*}
	2\ell< |\tau|-2m_0-m_1,\qquad 3\ell'<|\nu-\tau|-3m_0'-2m_1'-m_2',
\end{equation*}
and $\{\mathcal I_j:j=1,\dots,r\}$ is any possible partition of the index-set $\mathcal I_\tau$ determined by $\tau$ and  $\{\mathcal I_j':j=1,\dots,r'\}$ is any possible partition of the index-set $\mathcal I_{\nu-\tau}$ determined by $\nu-\tau$. Hence, since $|\mathbf k|$ small, we have
\begin{align}\label{eq:L2 bound of I(2)}
	\|I^{(2)}\|_{L^2}^2&\le C\sum_{\substack{\nu\le \alpha\\ \tau <\nu}}\sum_{\substack{\{\mathcal I_j:j=1,\dots,r\}\\ \{\mathcal I_j':j=1,\dots,r'\}\\r\le |\tau|, r'\le |\nu-\tau|}}\int_{\mathbb R^d}|\mathbf k|^{2(m_1+m_1'+2m_2')}t^{2(\ell+\ell'+m_0+m_1+m_0'+m_1'+m_2')}e^{-c|\mathbf k|^2t}\,\dif \mathbf k\nonumber\\
	&\le  C\sum_{\substack{\nu\le \alpha\\ \tau <\nu}}\sum_{\substack{\{\mathcal I_j:j=1,\dots,r\}, r\le |\tau|\\ \{\mathcal I_j':j=1,\dots,r'\},r'\le |\nu-\tau|}}(1+t)^{-\frac d2+2m_0+2m_0'+m_1+m_1'+2\ell+2\ell'}\nonumber\\
	&\le C\sum_{\substack{\nu\le \alpha\\ \tau <\nu}}\sum_{\substack{\{\mathcal I_j:j=1,\dots,r\},r\le |\tau|\\ \{\mathcal I_j':j=1,\dots,r'\},r'\le |\nu-\tau|}}(1+t)^{-\frac d2+|\tau|+|\nu-\tau|-(\ell'+m_0'+m_1'+m_2')}\nonumber\\
	&\le C\sum_{\substack{\nu\le \alpha\\ \tau <\nu}}\sum_{\substack{\{\mathcal I_j:j=1,\dots,r\},r\le |\tau|\\ \{\mathcal I_j':j=1,\dots,r'\},r'\le |\nu-\tau|}}(1+t)^{|\nu|-\frac d2-r'}\le C(1+t)^{|\alpha|-\frac d2-1}
\end{align}
since $|\tau|+|\nu-\tau|=|\nu|\le |\alpha|$ for any $\tau\le \nu\le \alpha$ and $\ell'+m_0'+m_1'+m_2'=r'\ge 1$ by definition and $\tau<\nu$.

We continue to estimate $J$ in \eqref{eq:components I and J for low frequency}. By the Leibniz rule, one has
\begin{equation}\label{eq:derivatives of J}
	\partial^\alpha J=\sum_{\nu\le \alpha}\sum_{\tau\le \nu}\begin{pmatrix} \alpha \\ \nu \end{pmatrix}\begin{pmatrix} \nu \\ \tau  \end{pmatrix}\partial^\tau e^{-\mathbf k\cdot \mathbf D\mathbf k t+\mathcal O(|\mathbf k|^3)t}\partial^{\nu-\tau}\mathcal O(|\mathbf k|)\partial^{\alpha-\nu}\chi_1(\mathbf k)=J^{(1)}+J^{(2)},
\end{equation}
where
\begin{equation}\label{eq:J(1)}
	J^{(1)}:=\sum_{\nu\le \alpha}\begin{pmatrix} \alpha \\ \nu \end{pmatrix}\partial^\nu e^{-\mathbf k\cdot \mathbf D\mathbf k t+\mathcal O(|\mathbf k|^3)t}\mathcal O(|\mathbf k|)\partial^{\alpha-\nu}\chi_1(\mathbf k)
\end{equation}
and
\begin{equation}\label{eq:J(2)}
	J^{(2)}:=\sum_{\nu\le \alpha}\sum_{\tau< \nu}\begin{pmatrix} \alpha \\ \nu \end{pmatrix}\begin{pmatrix} \nu \\ \tau  \end{pmatrix}\partial^\tau e^{-\mathbf k\cdot \mathbf D\mathbf k t+\mathcal O(|\mathbf k|^3)t}\partial^{\nu-\tau}\mathcal O(|\mathbf k|)\partial^{\alpha-\nu}\chi_1(\mathbf k).
\end{equation}
We then begin with $J^{(1)}$. Since $\chi_1\in C_c^\infty(\mathbb R^d)$ and $\mathbf D\in\mathbb R^{d\times d}$ is positive definite, from \eqref{eq:J(1)} and the estimate \eqref{eq:derivatives estimate} in Lemma \ref{lem:derivatives estimate}, there are constants $c,c'>0$ and $C>0$ such that
\begin{align*}
	|J^{(1)}|&\le C\sum_{\nu\le \alpha}|\partial^\nu e^{-\mathbf k\cdot \mathbf D\mathbf k t+\mathcal O(|\mathbf k|^3)t}||\mathcal O(|\mathbf k|)|\nonumber\\
	&\le C\sum_{\nu\le \alpha}\sum_{\{\mathcal I_j:j=1,\dots,r\}}|\mathbf k|^{m_1+1}t^{\ell+m_0+m_1}e^{-c|\mathbf k|^2t+c'|\mathbf k|^3t},
\end{align*}
where $m_k\ge 0$ is the cardinality of the set $\{j\in\{1,\dots,r\}:|\mathcal I_j|=|\nu|-k\}$ for $k=0,1$ and $\ell\ge 0$ satisfies
\begin{equation*}
	2\ell< |\nu|-2m_0-m_1
\end{equation*}
and $\{\mathcal I_j:j=1,\dots,r\}$ is any possible partition of the index-set $\mathcal I_\nu$ determined by $\nu$. Since $|\mathbf k|$ small, it implies that
\begin{align}\label{eq:L2 bound of J(1)}
	\|J^{(1)}\|_{L^2}&\le C\sum_{\nu\le \alpha}\sum_{\{\mathcal I_j:j=1,\dots,r\}}\int_{\mathbb R^d}|\mathbf k|^{2(m_1+1)}t^{2(\ell+m_0+m_1)}e^{-c|\mathbf k|^2t}\,\dif \mathbf k\nonumber\\
	&\le C\sum_{\nu\le \alpha}\sum_{\{\mathcal I_j:j=1,\dots,r\}}(1+t)^{-\frac d2-1+2\ell+2m_0+m_1}\nonumber\\
	&\le C\sum_{\nu\le \alpha}(1+t)^{|\nu|-\frac d2-1}\le C(1+t)^{|\alpha|-\frac d2-1}
\end{align}
since $|\nu|\le |\alpha|$ for all $\nu\le \alpha$.

We estimate $J^{(2)}$ in \eqref{eq:J(2)}. Since $\chi_1\in C_c^\infty(\mathbb R^d)$ and $\mathbf D\in\mathbb R^{d\times d}$ is positive definite, from \eqref{eq:J(2)} and the estimate \eqref{eq:derivatives estimate} in Lemma \ref{lem:derivatives estimate}, there are constants $c,c'>0$ and $C>0$ such that
\begin{align*}
	|J^{(2)}|&\le C\sum_{\nu\le \alpha}\sum_{\tau< \nu}|\partial^\tau e^{-\mathbf k\cdot \mathbf D\mathbf k t+\mathcal O(|\mathbf k|^3)t}||\partial^{\nu-\tau}\mathcal O(|\mathbf k|)|\nonumber\\
	&\le C\sum_{\nu\le \alpha}\sum_{\tau< \nu}\sum_{\{\mathcal I_j:j=1,\dots,r\},r\le |\tau|}|\mathbf k|^{m_1}t^{\ell+m_0+m_1}e^{-c|\mathbf k|^2t+c'|\mathbf k|^3t},
\end{align*}
where $m_k\ge 0$ is the cardinality of the set $\{j\in\{1,\dots,r\}:|\mathcal I_j|=|\tau|-k\}$ for $k=0,1$ and $\ell\ge 0$ satisfies
\begin{equation*}
	2\ell< |\tau|-2m_0-m_1
\end{equation*}
and $\{\mathcal I_j:j=1,\dots,r\}$ is any possible partition of the index-set $\mathcal I_\tau$ determined by $\tau$. Thus, we have
\begin{align}\label{eq:L2 bound of J(2)}
	\|J^{(2)}\|_{L^2}^2 &\le C\sum_{\nu\le \alpha}\sum_{\tau< \nu}\sum_{\{\mathcal I_j:j=1,\dots,r\},r\le |\tau|}\int_{\mathbb R^d}|\mathbf k|^{2m_1}t^{2(\ell+m_0+m_1)}e^{-c|\mathbf k|^2t+c'|\mathbf k|^3t}\,\dif \mathbf k\nonumber\\
	&\le C\sum_{\nu\le \alpha}\sum_{\tau< \nu}\sum_{\{\mathcal I_j:j=1,\dots,r\},r\le |\tau|}(1+t)^{-\frac d2+2\ell+2m_0+m_1}\nonumber\\
	&\le C\sum_{\nu\le \alpha}\sum_{\tau< \nu}(1+t)^{|\tau|-\frac d2}\le C\sum_{\nu\le \alpha}(1+t)^{|\nu|-\frac d2-1}\le C(1+t)^{|\alpha|-\frac d2-1}
\end{align}
since $|\tau|\le |\nu|-1$ for all $\tau<\nu$ and $|\nu|\le |\alpha|$ for all $\nu\le \alpha$.

Therefore, from \eqref{eq:decomposition of solution for low frequency 1 into I and J}, \eqref{eq:components I and J for low frequency} and \eqref{eq:derivatives of I} - \eqref{eq:L2 bound of J(2)}, one has
\begin{equation}\label{eq:L2 bound of derivative of I+J}
	\|\partial^\alpha((\hat \Gamma_t^{(1)}-\hat \Phi_t)\chi_1)\|_{L^2}\le C(1+t)^{\frac{|\alpha|}{2}-\frac d4-\frac 12}, \qquad \forall t>0, \alpha\in\mathbb N^d.
\end{equation}
Let $s\in \mathbb Z_+$ satisfying $s>d/2$, then by the Carlson--Beurling inequality \eqref{eq:Carlson-Beurling Hs} in Lemma \ref{lem:Carlson-Beurling} and \eqref{eq:L2 bound of derivative of I+J}, we obtain
\begin{align}\label{eq:Mp bound of Gamma_t(1)-Phi_t}
	\|(\hat \Gamma_t^{(1)}-\Phi_t)\chi_1\|_{M_p}&\le \|(\hat \Gamma_t^{(1)}-\Phi_t)\chi_1\|_{L^2}^{1-\frac{d}{2s}}(\sum_{|\alpha|=s}\|\partial^\alpha((\hat \Gamma_t^{(1)}-\Phi_t)\chi_1) \|_{L^2})^{\frac{d}{2s}}\nonumber\\
	&\le C(1+t)^{-(\frac d4+\frac 12)(1-\frac{d}{2s})+(\frac{s}{2}-\frac d4-\frac 12)\frac{d}{2s}}\le C(1+t)^{-\frac 12},
\end{align}
for all $1\le p\le \infty$ and $t>0$.

It follows from \eqref{eq:Mp bound of Gamma_t(1)-Phi_t} and the definition of the $M_p$-norm that for any $u_0\in L^p(\mathbb R^d)$ for $1\le p\le \infty $, one has
\begin{align}\label{eq:Lp-Lp bound}
	\|\mathcal F^{-1}((\hat \Gamma_t(\mathbf k)P_0(i\mathbf k)-\hat \Phi_t(\mathbf k))\chi_1(\mathbf k))*u_0\|_{L^p}& \le \|(\hat \Gamma_t^{(1)}-\hat \Phi_t)\chi_1\|_{M_p}\|u_0\|_{L^p}\nonumber\\
	&\le C(1+t)^{-\frac 12}\|u_0\|_{L^p}, \qquad \forall t>0.
\end{align}

We estimate the remain parts. Recall $\hat \Gamma_t^{(2)}$ in \eqref{eq:decomposition of solution for low frequency 2} and the estimate \eqref{eq:point-wise estimate for Gamma_t(2)}, there are constants $c>0$ and $C>0$ such that
\begin{equation}\label{eq:point-wise estimate Gamma2 new}
	|\hat \Gamma_t^{(2)}(\mathbf k)\chi_1(\mathbf k)|\le Ce^{-ct}|\chi_1(\mathbf k)|.
\end{equation}
Thus, by the Parseval identity, one has
\begin{align}\label{eq:L2-L2 bound of Gamma_t(2)}
	\|\mathcal F^{-1}(\hat\Gamma_t(\mathbf k)(I-P_0(\mathbf k))\chi_1(\mathbf k))*u_0\|_{L^2}=\|\hat \Gamma_t^{(2)}\chi_1\hat u_0\|_{L^2}&\le Ce^{-ct}\|\chi_1\|_{L^\infty}\|\hat u_0\|_{L^2}\nonumber\\
	&\le Ce^{-ct}\|u_0\|_{L^2},\qquad \forall t>0.
\end{align}

Moreover, if $\Gamma_t$ has compact support contained in $\{(\mathbf x,t)\in\mathbb R^d\times \mathbb R:|\mathbf x/t|\le C\}$ for some constant $C>0$. We then obtain from \eqref{eq:point-wise estimate Gamma2 new} and the Young inequality that there is $c',c>0$ and $C>0$ such that for $1\le p\le \infty$, one has
\begin{align}\label{eq:L2-L2 bound of Gamma_t(2) extend}
	\|\mathcal F^{-1}(\hat\Gamma_t(\mathbf k)(I-P_0(\mathbf k))\chi_1(\mathbf k))*u_0\|_{L^p}&\le C\|\mathcal F^{-1}(\hat\Gamma_t(\mathbf k)(I-P_0(\mathbf k))\chi_1(\mathbf k))\|_{L^1}\|u_0\|_{L^p}\nonumber\\
	&\le C\Bigl(\int_{|\mathbf x|\le Ct}\Bigl|\int_{|\mathbf k|<\varepsilon}e^{i\mathbf x\cdot \mathbf k}\hat\Gamma_t^{(2)}(\mathbf k)\chi_1(\mathbf k)\,\dif \mathbf k\Bigr|\,\dif \mathbf x\Bigr) \|u_0\|_{L^p}\nonumber\\
	&\le Ce^{-c't}t^d\|u_0\|_{L^p}\le Ce^{-ct}\|u_0\|_{L^p},\qquad \forall t\ge 1.
\end{align}

\vskip.25cm
{\it Step 3. Interpolation.}
\vskip.25cm
Finally, consider the interpolation inequality, from \eqref{eq:Linfty-L1 bound} and \eqref{eq:Lp-Lp bound}, we obtain \eqref{eq:low frequency estimate 1}, namely for $1\le q\le p\le \infty$ and $t>0$, one has
\begin{equation*}
	\|\mathcal F^{-1}((\hat \Gamma_t(\mathbf k)P_0(i\mathbf k)-\hat \Phi_t(\mathbf k))\chi_1(\mathbf k))*u_0\|_{L^p}\le C(1+t)^{-\frac d2(\frac 1q-\frac1p)-\frac 12}\|u_0\|_{L^q}.
\end{equation*}
We also obtain \eqref{eq:low frequency estimate 3} from \eqref{eq:Linfty-L1 bound Gamma2} and \eqref{eq:L2-L2 bound of Gamma_t(2)} {\it i.e.} for $1\le q\le 2\le p\le \infty$, we have
\begin{equation}\label{eq:temp}
	\|\mathcal F^{-1}(\hat \Gamma_t(\mathbf k)(I-P_0(i\mathbf k))\chi_1(\mathbf k))*u_0\|_{L^p}\le Ce^{-ct}\|u_0\|_{L^q}.
\end{equation}
Moreover, if $\Gamma_t$ has support in $\{(\mathbf x,t)\in\mathbb R^d\times \mathbb R:|\mathbf x/t|\le C\}$ for some constant $C>0$, then from \eqref{eq:Linfty-L1 bound Gamma2} and \eqref{eq:L2-L2 bound of Gamma_t(2) extend}, we also have \eqref{eq:temp} for $1\le q\le p\le \infty$ and $t\ge 1$.
\vskip.25cm
{\it Under the symmetry property $\mathsf S$.}
\vskip.25cm
Moreover, if in addition the condition $\mathsf S$ holds, then for small $\mathbf k$, from \eqref{eq:E(z) for low frequency} - \eqref{eq:the 0-group 2} in Proposition \ref{prop:low-frequency}, one has $\hat \Gamma_t\chi_1=\hat \Gamma_t^{(1)}\chi_1+\hat \Gamma_t^{(2)}\chi_1$ where
\begin{equation}\label{eq:decomposition of solution for low frequency 2 1}
	\hat \Gamma_t^{(1)}(\mathbf k)=e^{-\lambda_0(i\mathbf k)t}P_0(i\mathbf k)=e^{-\mathbf k\cdot \mathbf D\mathbf k t+\mathcal O(|\mathbf k|^4)t}(P_0^{(0)}+\mathbf P_0^{(1)}\cdot i\mathbf k+\mathcal O(|\mathbf k|^2)),
\end{equation}
and
\begin{equation}\label{eq:decomposition of solution for low frequency 2 2}
	\hat \Gamma_t^{(2)}(\mathbf k)=\sum_{j=1}^se^{-E_j(i\mathbf k)t}P_j(i\mathbf k)=\sum_{j=1}^se^{-\lambda_j^{(0)}t}e^{-N_j^{(0)}t+\mathcal O(|\mathbf k|)t}(P_j^{(0)}+\mathcal O(|\mathbf k|)),
\end{equation}
where $\mathbf c\in\mathbb R^d$ and $\mathbf D\in\mathbb R^{d\times d}$ is positive definite given by \eqref{eq:coefficients c and D}, $P_0^{(0)}$ is the eigenprojection associated with $0\in\sigma(B)$, $\mathbf P_0^{(1)}\in(\mathbb R^{n\times n})^d$ is in \eqref{eq:coefficient P01 of the total projection P0}, and $\lambda_j^{(0)}\in\sigma(B)\backslash \{0\}$, $\re \lambda_j^{(0)}>0$, with the associated eigenprojection $P_j^{(0)}$ and eigennilpotent $N_j^{(0)}$ for $j\in\{1,\dots,s\}$ and $s$ is the cardinality of $\sigma(B)\backslash \{0\}$.

Hence, consider
\begin{equation*}
	\hat \Psi_t(\mathbf k)=e^{-\mathbf k\cdot \mathbf D\mathbf k t}(P_0^{(0)}+\mathbf P_0^{(1)}\cdot i\mathbf k),
\end{equation*}
one has
\begin{equation}\label{eq:decomposition of solution for low frequency 1 into I and J symmetry}
	(\hat \Gamma_t^{(1)}(\mathbf k)-\hat \Psi_t(\mathbf k))\chi_1(\mathbf k)=I+J,
\end{equation}
where
\begin{equation}\label{eq:components I and J for low frequency symmetry}
	I:=e^{-\mathbf k\cdot \mathbf D\mathbf k t}(e^{\mathcal O(|\mathbf k|^4)t}-1)(P_0^{(0)}+\mathbf P_0^{(1)}\cdot i\mathbf k)\chi_1(\mathbf k),\quad J:=e^{-\mathbf k\cdot \mathbf D\mathbf k t+\mathcal O(|\mathbf k|^4)t}\mathcal O(|\mathbf k|^2)\chi_1(\mathbf k).
\end{equation}
The estimates are then similar to the previous case. We omit the details. We thus obtain for $1\le q\le p\le\infty$ and $t>0$ that
\begin{equation*}
	\|\mathcal F^{-1}((\hat \Gamma_t(\mathbf k)P_0(\mathbf k)-\hat \Psi_t(\mathbf k))\chi_1(\mathbf k))*u_0\|_{L^p}\le C(1+t)^{-\frac d2(\frac 1q-\frac1p)-1}\|u_0\|_{L^q}.
\end{equation*}
The proof is done since the others are also similar to before.
\end{proof}

\subsection{Intermediate-frequency analysis}
We consider the intermediate-frequency part by considering $\hat \Gamma_t\chi_2$, $\hat \Phi_t\chi_2$ and $\hat \Psi_t\chi_2$, where $\hat \Gamma_t$, $\hat \Phi_t$ and $\hat \Psi_t$ are given by \eqref{eq:Fourier transform of Gamma kernel} and \eqref{eq:Fourier transform of Phi and Psi kernel} respectively. One has the following.
\begin{proposition}[Intermediate-frequency estimate]\label{prop:intermediate-frequency estimates}
If the condition $\mathsf D$ holds, then for $1\le q\le 2\le p\le \infty$, there are constants $c>0$ and $C>0$ such that
\begin{equation}\label{eq:intermediate frequency estimate 1}
	\|\mathcal F^{-1}(\hat \Gamma_t(\mathbf k)\chi_2(\mathbf k))*u_0\|_{L^p}\le Ce^{-ct}\|u_0\|_{L^q},\qquad \forall t>0.
\end{equation}
Moreover, \eqref{eq:intermediate frequency estimate 1} holds for $1\le q\le p\le\infty$ and $t\ge 1$ if $\Gamma_t$ has compact support contained in $\{(\mathbf x,t)\in\mathbb R^d\times \mathbb R:|\mathbf x/t|\le C\}$ for some constant $C>0$.
\end{proposition}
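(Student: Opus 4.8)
The plan is to combine Condition $\mathsf D$ with the compactness of $\operatorname{supp}\chi_2$ to obtain a uniform exponential bound on the Fourier symbol $\hat\Gamma_t(\mathbf k)=e^{-E(i\mathbf k)t}$ over $\operatorname{supp}\chi_2$, together with analogous bounds on its $\mathbf k$-derivatives, and then to deduce the stated $L^p$--$L^q$ estimate by Plancherel, Young's inequality and the complex interpolation inequality of Section~\ref{sec:Preliminaries}. The decisive step, and the only real obstacle, is the claim that there are $c>0$ and $C_\alpha>0$ such that $|\partial^\alpha_{\mathbf k}(e^{-E(i\mathbf k)t})|\le C_\alpha e^{-ct}$ for all $\mathbf k\in\operatorname{supp}\chi_2$, all $t>0$ and every multi-index $\alpha$. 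Since $\operatorname{supp}\chi_2\subset\{\varepsilon\le|\mathbf k|\le\rho\}$, Condition $\mathsf D$ forces $\re\lambda(i\mathbf k)\ge\theta\varepsilon^2/(1+\varepsilon^2)=:2c>0$ for every eigenvalue $\lambda(i\mathbf k)$ of $E(i\mathbf k)$, while $|E(i\mathbf k)|$ stays bounded on $\operatorname{supp}\chi_2$; hence $K:=\bigcup_{\mathbf k\in\operatorname{supp}\chi_2}\sigma(E(i\mathbf k))$ is a compact subset of $\{\re z\ge 2c\}$. One cannot read these bounds off the spectrum alone, because $E(i\mathbf k)$ may fail to be diagonalizable at the exceptional frequencies lying in $\operatorname{supp}\chi_2$, so its eigenprojections blow up there and the naive estimate has a $\mathbf k$-dependent constant. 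Instead I would fix a rectangular contour $\Gamma_*\subset\{\re z\ge c\}$ enclosing $K$ and use the functional-calculus representation $e^{-E(i\mathbf k)t}=\frac{1}{2\pi i}\int_{\Gamma_*}e^{-zt}(zI-E(i\mathbf k))^{-1}\,\dif z$, differentiating the resolvent under the integral sign; since $\mathbf k\mapsto E(i\mathbf k)$ is polynomial, hence entire, and $\Gamma_*\cap K=\emptyset$, compactness gives $\sup\{|\partial^\beta_{\mathbf k}(zI-E(i\mathbf k))^{-1}|:z\in\Gamma_*,\ \mathbf k\in\operatorname{supp}\chi_2\}<+\infty$ for each $\beta$, and $\int_{\Gamma_*}|e^{-zt}|\,|\dif z|\le C_{\Gamma_*}e^{-ct}$ then yields the claim.

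Granting this, since $\chi_2\in C_c^\infty(\mathbb R^d)$ the Leibniz rule gives $|\partial^\alpha(\hat\Gamma_t\chi_2)|\le C_\alpha e^{-ct}\mathbf 1_{\operatorname{supp}\chi_2}$, so $\|\partial^\alpha(\hat\Gamma_t\chi_2)\|_{L^1}+\|\partial^\alpha(\hat\Gamma_t\chi_2)\|_{L^2}\le C_\alpha e^{-ct}$ for all $\alpha$; in particular $\|\hat\Gamma_t\chi_2\|_{L^1}+\|\hat\Gamma_t\chi_2\|_{L^2}+\|\hat\Gamma_t\chi_2\|_{L^\infty}\le Ce^{-ct}$. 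From $\|\hat\Gamma_t\chi_2\|_{L^1}$ one gets $\|\mathcal F^{-1}(\hat\Gamma_t\chi_2)\|_{L^\infty}\le Ce^{-ct}$, and by Plancherel $\|\mathcal F^{-1}(\hat\Gamma_t\chi_2)\|_{L^2}\le Ce^{-ct}$; Young's inequality then gives the endpoint operator bounds $\|\mathcal F^{-1}(\hat\Gamma_t\chi_2)*u_0\|_{L^p}\le Ce^{-ct}\|u_0\|_{L^q}$ at $(\tfrac1p,\tfrac1q)\in\{(0,1),(0,\tfrac12),(\tfrac12,1)\}$, and Plancherel with $\|\hat\Gamma_t\chi_2\|_{L^\infty}$ gives the $L^2$--$L^2$ bound at $(\tfrac12,\tfrac12)$. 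Applying the complex interpolation inequality first along the edges of the square with these four vertices and then across it produces \eqref{eq:intermediate frequency estimate 1} for every $1\le q\le 2\le p\le\infty$ and $t>0$.

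For the second assertion one must reach the full range $1\le q\le p\le\infty$; it suffices to add the $L^1$--$L^1$ and $L^\infty$--$L^\infty$ endpoints, since interpolating these with $L^\infty$--$L^1$ covers the whole range. Under the stated support hypothesis on $\Gamma_t$ this can be motivated by confining the $\mathbf x$-integral to the wave cone, but the cleanest route is again the derivative bounds of the first step: integrating by parts in $\mathbf k$ in $\mathcal F^{-1}(\hat\Gamma_t\chi_2)(\mathbf x)=(2\pi)^{-d}\int e^{i\mathbf x\cdot\mathbf k}(\hat\Gamma_t\chi_2)(\mathbf k)\,\dif\mathbf k$ — with no boundary term, as $\hat\Gamma_t\chi_2\in C_c^\infty$ — gives $|\mathbf x^\alpha\mathcal F^{-1}(\hat\Gamma_t\chi_2)(\mathbf x)|\le C\|\partial^\alpha(\hat\Gamma_t\chi_2)\|_{L^1}\le C_\alpha e^{-ct}$, hence $\|(1+|\mathbf x|)^{d+1}\mathcal F^{-1}(\hat\Gamma_t\chi_2)\|_{L^\infty}\le Ce^{-ct}$ and therefore $\|\mathcal F^{-1}(\hat\Gamma_t\chi_2)\|_{L^1}\le Ce^{-ct}$; then $\|\mathcal F^{-1}(\hat\Gamma_t\chi_2)*u_0\|_{L^1}\le Ce^{-ct}\|u_0\|_{L^1}$ and $\|\mathcal F^{-1}(\hat\Gamma_t\chi_2)*u_0\|_{L^\infty}\le Ce^{-ct}\|u_0\|_{L^\infty}$, and interpolation over the triangle with vertices $L^1$--$L^1$, $L^\infty$--$L^\infty$, $L^\infty$--$L^1$ gives \eqref{eq:intermediate frequency estimate 1} for all $1\le q\le p\le\infty$ and $t\ge1$. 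Everything after the first step is routine; the uniform exponential and derivative bounds, obtained from the fixed-contour functional calculus made uniform by compactness, are the heart of the matter.
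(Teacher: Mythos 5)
Your proof is correct, but it takes a genuinely different route from the paper's. The paper obtains the pointwise bound $|\hat\Gamma_t(\mathbf k)\chi_2(\mathbf k)|\le Ce^{-ct}$ by applying the Householder theorem to the spectral radius of $e^{-E(i\mathbf k)}$, then passes to the $L^\infty$--$L^1$ and $L^2$--$L^2$ endpoints via Young and Parseval, and for the full range $1\le q\le p\le\infty$ it invokes the wave-cone support of $\Gamma_t$, bounding $\|\mathcal F^{-1}(\hat\Gamma_t\chi_2)\|_{L^1}$ by integrating the pointwise estimate over $\{|\mathbf x|\le Ct\}$ to pick up a polynomial factor $t^d$ that the exponential absorbs for $t\ge 1$. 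You instead represent $e^{-E(i\mathbf k)t}$ by a Dunford integral over a fixed rectangular contour $\Gamma_*\subset\{\re z\ge c\}$ enclosing $\bigcup_{\mathbf k\in\operatorname{supp}\chi_2}\sigma(E(i\mathbf k))$ (which condition $\mathsf D$ and compactness keep in a bounded subset of $\{\re z\ge 2c\}$); differentiating under the integral and using the affine dependence of $E$ on $\mathbf k$ plus compactness gives the uniform bounds $|\partial^\alpha_{\mathbf k}e^{-E(i\mathbf k)t}|\le C_\alpha e^{-ct}$ for every $\alpha$. This buys you two things: (i) it avoids the delicacy in the paper's Householder step, where the induced norm that makes $|e^{-E(i\mathbf k)}|<1$ depends on $\mathbf k$, so that a single norm making the essential supremum over the annulus strictly less than $1$ needs additional care; and (ii) with all derivative bounds in hand, integration by parts yields $\|(1+|\mathbf x|)^{d+1}\mathcal F^{-1}(\hat\Gamma_t\chi_2)\|_{L^\infty}\le Ce^{-ct}$ and hence $\|\mathcal F^{-1}(\hat\Gamma_t\chi_2)\|_{L^1}\le Ce^{-ct}$, giving the $L^1$--$L^1$ and $L^\infty$--$L^\infty$ endpoints (and so the full $1\le q\le p\le\infty$ range by Riesz--Thorin) \emph{without} the wave-cone hypothesis on $\Gamma_t$. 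Your argument is therefore strictly stronger than the paper's on this intermediate-frequency block; it does not change the overall theorem, since the wave-cone hypothesis is still used in the low-frequency analysis (Proposition~\ref{prop:low-frequency estimates}), but it is a cleaner and more self-contained route here. The remaining steps — Young, Plancherel, and interpolation across the square (resp.\ triangle) — coincide with the paper's.
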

\begin{proof}
Recall $E(i\mathbf k)=B+A(i\mathbf k)$ in \eqref{eq:operators E and A} for $\mathbf k\in\mathbb R^d$. We consider $\hat \Gamma_t$ where $\hat \Gamma_t(\mathbf k)=e^{-E(i\mathbf k)t}$. Since the condition $\mathsf D$ holds, $\re \lambda(i\mathbf k)>0$ for any eigenvalue $\lambda(i\mathbf k)$ of $E(i\mathbf k)$ and $\mathbf k\ne \mathbf 0\in\mathbb R^d$. Thus, the operator $e^{-E(i\mathbf k)}$ has the spectral radius $\textrm{rad}(e^{-E(i\mathbf k)})<1$ for almost everywhere. It follows from the Householder theorem in \cite{serre10} that there is an induced norm such that
\begin{equation*}
	0<\varphi:=\esssup_{\mathbb R^d}|e^{-E(i\mathbf k)}|<1.
\end{equation*}
Then, for $t>0$ with integer part $m$, since $\log \varphi<0$, there are $c,C>0$ such that one has
\begin{align}\label{eq:point-wise estimate Gamma_tchi(2)}
	|\hat \Gamma_t(\mathbf k)\chi_2(\mathbf k)|=|e^{-E(i\mathbf k)t}\chi_2(\mathbf k)|&\le |e^{-E(i\mathbf k)}|^m|e^{-E(i\mathbf k)(t-m)}||\chi_2(\mathbf k)|\nonumber\\
	&\le \varphi^me^{|E(i\mathbf k)|}|\chi_2(\mathbf k)|\nonumber\\
	&\le \varphi^{-1} e^{(m+1)\log \varphi}e^{|E(i\mathbf k)|}|\chi_2(\mathbf k)|\le Ce^{-ct}e^{|E(i\mathbf k)|}|\chi_2(\mathbf k)|.
\end{align}

We study the $L^\infty$-$L^1$ estimate. By the Young inequality and from \eqref{eq:point-wise estimate Gamma_tchi(2)}, there are constants $c>0$ and $C>0$ such that for $t>0$, we have
\begin{align}\label{eq:Linfty-L1 bound 1}
	\|\mathcal F^{-1}(\hat \Gamma_t(\mathbf k)\chi_2(\mathbf k))*u_0\|_{L^\infty}&\le C\|\mathcal F^{-1}(\hat \Gamma_t(\mathbf k)\chi_2(\mathbf k))\|_{L^\infty}\|u_0\|_{L^1}\nonumber\\
	&\le C\|\hat \Gamma_t\chi_2\|_{L^1}\|u_0\|_{L^1}\le Ce^{-ct}\|u_0\|_{L^1}.
\end{align}

We prove the $L^2$-$L^2$ estimate. It follows from the Parseval identity and the estimate \eqref{eq:point-wise estimate Gamma_tchi(2)} that for $t>0$, one has
\begin{equation}\label{eq:Lp-Lp bound 1}
	\|\mathcal F^{-1}(\hat \Gamma_t(\mathbf k)\chi_2(\mathbf k))*u_0\|_{L^2}\le  C\|\hat \Gamma_t\chi_2\|_{L^\infty}\|\hat u_0\|_{L^2}\le Ce^{-ct}\|u_0\|_{L^2}
\end{equation}
for some constants $c>0$ and $C>0$.

Moreover, if $\Gamma_t$ has compact support contained in $\{(\mathbf x,t)\in\mathbb R^d\times \mathbb R:|\mathbf x/t|\le C\}$ for some constant $C>0$. From \eqref{eq:point-wise estimate Gamma_tchi(2)} and the Young inequality, there are $c',c>0$ and $C>0$ such that for $1\le p\le \infty$, one has
\begin{align}\label{eq:Lp-Lp bound of Gamma_tchi(2) extend}
	\|\mathcal F^{-1}(\hat\Gamma_t(\mathbf k)\chi_2(\mathbf k))*u_0\|_{L^p}&\le C\|\mathcal F^{-1}(\hat\Gamma_t(\mathbf k)\chi_2(\mathbf k))\|_{L^1}\|u_0\|_{L^p}\nonumber\\
	&\le C\Bigl(\int_{|\mathbf x|\le Ct}\Bigl|\int_{\varepsilon\le |\mathbf k|\le \rho}e^{i\mathbf x\cdot \mathbf k}\hat\Gamma_t(\mathbf k)\chi_2(\mathbf k)\,\dif \mathbf k\Bigr|\,\dif \mathbf x\Bigr) \|u_0\|_{L^p}\nonumber\\
	&\le Ce^{-c't}t^{d}\|u_0\|_{L^p}\le Ce^{-ct}\|u_0\|_{L^p},\qquad \forall t\ge 1.
\end{align}

We finish the proof of \eqref{eq:intermediate frequency estimate 1} by applying the interpolation inequality and by using the $L^\infty$-$L^1$ estimate \eqref{eq:Linfty-L1 bound 1}, the $L^2$-$L^2$ estimate \eqref{eq:Lp-Lp bound 1} and the $L^p$-$L^p$ estimates \eqref{eq:Lp-Lp bound of Gamma_tchi(2) extend}.
\end{proof}

Moreover, we have the $L^p$-$L^q$ estimate for $\mathcal F^{-1}(\hat \Phi_t(\mathbf k)\chi_2(\mathbf k))*u_0$ and $\mathcal F^{-1}(\hat \Psi_t(\mathbf k)\chi_2(\mathbf k))*u_0$ for $1\le q\le p\le \infty$ as the following.
\begin{proposition}\label{prop:intermediate-frequency estimates 2}
If the conditions $\mathsf B$ and $\mathsf D$ hold, then for $1\le q\le p\le \infty$, there are constants $c>0$ and $C>0$ such that for $t\ge 1$, one has
\begin{equation}\label{eq:Lp-Lq estimate for Phi_tchi(2)}
	\|\mathcal F^{-1}(\hat \Phi_t(\mathbf k)\chi_2(\mathbf k))*u_0\|_{L^p}\le Ce^{-ct}\|u_0\|_{L^q}.
\end{equation}
Similarly, we have
\begin{equation}\label{eq:Lp-Lq estimate for Psi_tchi(2)}
	\|\mathcal F^{-1}(\hat \Psi_t(\mathbf k)\chi_2(\mathbf k))*u_0\|_{L^p}\le Ce^{-ct}\|u_0\|_{L^q}.
\end{equation}
\end{proposition}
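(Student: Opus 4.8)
The plan is to reduce both estimates to two endpoint bounds---an $L^\infty$-$L^1$ estimate and an $L^p$-$L^p$ estimate uniform in $p\in[1,\infty]$, each carrying an exponential factor $e^{-ct}$---and then conclude by the complex interpolation inequality, exactly along the lines of the proof of Proposition \ref{prop:intermediate-frequency estimates}. What makes the intermediate regime soft is that on $\mathrm{supp}\,\chi_2\subset\{\varepsilon\le|\mathbf k|\le\rho\}$ the quadratic form $\mathbf k\cdot\mathbf D\mathbf k$ is bounded below by $c\varepsilon^2>0$, since $\mathbf D$ is positive definite by Proposition \ref{prop:low-frequency}, and bounded above by $C\rho^2$; hence the scalar kernel $e^{-\mathbf k\cdot\mathbf D\mathbf k t}$ is $\le Ce^{-c\varepsilon^2 t}$ uniformly on the support, while every polynomial-in-$|\mathbf k|$ factor (coming from $\mathbf c\cdot i\mathbf k$ after differentiation, or from the entry $\mathbf P_0^{(1)}\cdot i\mathbf k$) is bounded there.

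First I would treat $\hat\Phi_t\chi_2=e^{-\mathbf c\cdot i\mathbf k t-\mathbf k\cdot\mathbf D\mathbf k t}P_0^{(0)}\chi_2(\mathbf k)$. For the $L^\infty$-$L^1$ estimate, Young's inequality gives $\|\mathcal F^{-1}(\hat\Phi_t\chi_2)*u_0\|_{L^\infty}\le\|\hat\Phi_t\chi_2\|_{L^1}\|u_0\|_{L^1}$, and since $|\hat\Phi_t(\mathbf k)\chi_2(\mathbf k)|\le|P_0^{(0)}|\,e^{-\mathbf k\cdot\mathbf D\mathbf k t}\chi_2(\mathbf k)\le Ce^{-c\varepsilon^2 t}\mathbf 1_{\{\varepsilon\le|\mathbf k|\le\rho\}}$, integrating over the annulus yields $\|\hat\Phi_t\chi_2\|_{L^1}\le Ce^{-ct}$. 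For the $L^p$-$L^p$ estimate I would apply the Carlson--Beurling inequality \eqref{eq:Carlson-Beurling Hs}, so that it remains to bound $\|\partial^\alpha(\hat\Phi_t\chi_2)\|_{L^2}$ for an integer $s=|\alpha|>d/2$ (and for $\alpha=0$). Expanding by the Leibniz rule and applying Lemma \ref{lem:derivatives estimate} to the scalar exponentials $e^{-\mathbf c\cdot i\mathbf k t}$ and $e^{-\mathbf k\cdot\mathbf D\mathbf k t}$, every resulting term is pointwise dominated on $\mathrm{supp}\,\chi_2$ by $C(1+t)^{|\alpha|}e^{-c\varepsilon^2 t}$, whence $\|\partial^\alpha(\hat\Phi_t\chi_2)\|_{L^2}\le C(1+t)^{|\alpha|}e^{-c\varepsilon^2 t}\le Ce^{-ct}$. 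Carlson--Beurling then gives $\|\hat\Phi_t\chi_2\|_{M_p}\le Ce^{-ct}$ uniformly in $1\le p\le\infty$, and by the definition of the $M_p$-norm, $\|\mathcal F^{-1}(\hat\Phi_t\chi_2)*u_0\|_{L^p}\le Ce^{-ct}\|u_0\|_{L^p}$.

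The kernel $\hat\Psi_t\chi_2=e^{-\mathbf k\cdot\mathbf D\mathbf k t}(P_0^{(0)}+\mathbf P_0^{(1)}\cdot i\mathbf k)\chi_2(\mathbf k)$ is handled in the same way: the factor $(P_0^{(0)}+\mathbf P_0^{(1)}\cdot i\mathbf k)\chi_2(\mathbf k)$ is a smooth, compactly supported matrix-valued function all of whose derivatives are bounded, so the Leibniz rule together with Lemma \ref{lem:derivatives estimate} (applied to $e^{-\mathbf k\cdot\mathbf D\mathbf k t}$) again gives $\|\partial^\alpha(\hat\Psi_t\chi_2)\|_{L^2}\le Ce^{-ct}$, hence $\|\hat\Psi_t\chi_2\|_{M_p}\le Ce^{-ct}$, and the $L^\infty$-$L^1$ bound follows as before. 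Finally, interpolating the $L^\infty$-$L^1$ estimate against the $L^r$-$L^r$ estimate and letting $r$ range over $[1,\infty]$ covers all exponents $1\le q\le p\le\infty$ and produces \eqref{eq:Lp-Lq estimate for Phi_tchi(2)} and \eqref{eq:Lp-Lq estimate for Psi_tchi(2)}. I do not expect a real obstacle here; the only mildly delicate point is the bookkeeping in the Leibniz expansion of $\partial^\alpha$ applied to the product of a scalar exponential with a (matrix-valued) polynomial-times-cutoff, together with the trivial observation that the exponential factor $e^{-c\varepsilon^2 t}$ absorbs the polynomial-in-$t$ prefactors, which have degree at most $|\alpha|$.
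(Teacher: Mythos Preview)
Your proposal is correct and follows essentially the same approach as the paper: an $L^\infty$-$L^1$ bound via Young's inequality using $e^{-\mathbf k\cdot\mathbf D\mathbf k t}\le Ce^{-ct}$ on $\mathrm{supp}\,\chi_2$, an $L^p$-$L^p$ bound via Carlson--Beurling after controlling $\|\partial^\alpha(\hat\Phi_t\chi_2)\|_{L^2}$ through the Leibniz rule, and then interpolation. The only cosmetic differences are that the paper first reduces to $\mathbf c=\mathbf 0$ by translation (whereas you differentiate $e^{-\mathbf c\cdot i\mathbf kt}$ directly), and the paper invokes the pointwise derivative bound \eqref{eq:derivatives estimate parabolic previous} together with Remark \ref{rem:derivatives estimate parabolic previous} rather than Lemma \ref{lem:derivatives estimate}, whose hypothesis ``small $\mathbf x$'' is not literally satisfied on the annulus but is harmless since $|\mathbf k|$ is bounded there.
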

\begin{proof}
We estimate $\mathcal F^{-1}(\hat \Phi_t(\mathbf k)\chi_2(\mathbf k))*u_0$ and the other is similar. Recall that
\begin{equation*}
	\hat \Phi_t(\mathbf k)=e^{-\mathbf c\cdot i\mathbf kt-\mathbf k\cdot \mathbf D\mathbf kt}P_0^{(0)},
\end{equation*}
where $\mathbf c\in\mathbb R^d$ and $\mathbf D\in\mathbb R^{d\times d}$ is positive definite given by \eqref{eq:coefficients c and D} under the assumptions $\mathsf B$ and $\mathsf D$.

Since we can assume that $\mathbf c=\mathbf 0$ and since $\textrm{supp}\,\chi_2\subseteq \{\mathbf k\in\mathbb R^d:\varepsilon\le |\mathbf k|\le \rho\} $ for some $\varepsilon, \rho>0$, by the Young inequality, there are constants $c,c'>0$ and $C>0$ such that we have the $L^\infty$-$L^1$ estimate
\begin{align}\label{eq:L^infty-L1 estimate (2)}
	\|\mathcal F^{-1}(\hat \Phi_t(\mathbf k)\chi_2(\mathbf k))*u_0\|_{L^\infty}&\le C\|\mathcal F^{-1}(e^{-\mathbf k\cdot \mathbf D\mathbf kt}P_0^{(0)}\chi_2(\mathbf k))\|_{L^\infty}\|u_0\|_{L^1}\nonumber\\
	&\le Ce^{-c't}\|e^{-\frac {c'}{2}|\cdot |^2t}\|_{L^1}\|u_0\|_{L^1}\le Ce^{-ct}\|u_0\|_{L^1},\qquad \forall t>0.
\end{align}

We study the $L^p$-$L^p$ estimate for $1\le p\le \infty$. Primarily, we have
\begin{equation*}
	\|e^{-\mathbf k\cdot \mathbf D\mathbf kt}P_0^{(0)}\chi_2(\mathbf k)\|_{L^2}\le Ce^{-c't}\|e^{-\frac {c'}{2}|\cdot |^2t}\|_{L^2}\le Ce^{-ct}, \qquad \forall t>0.
\end{equation*}

Let $\alpha\in \mathbb N^d$, by the Leibniz formula, \eqref{eq:derivatives estimate parabolic previous} and Remark \ref{rem:derivatives estimate parabolic previous}, one has
\begin{align*}
	|\partial^\alpha ( e^{-\mathbf k\cdot \mathbf D\mathbf kt}P_0^{(0)}\chi_2(\mathbf k))|&\le C\sum_{\nu \le \alpha}|\partial^\nu e^{-\mathbf k\cdot \mathbf D\mathbf kt}||\partial^{\alpha-\nu}\chi_2(\mathbf k)|\nonumber\\
	&\le C\sum_{\nu\le \alpha}\sum_{\substack{\{\mathcal I_j:j=1,\dots,r\},r\le |\alpha|\\
	1\le |\mathcal I_j|\le 2}}|\mathbf k|^mt^{m+\ell}e^{-c'|\mathbf k|^2t}|\partial^{\alpha-\nu}\chi_2(\mathbf k)|,
\end{align*}
where $\{\mathcal I_j:j=1,\dots,r\}$ is any possible partition of the index-set $\mathcal I_\alpha$ determined by $\alpha$ and $m+2\ell=|\alpha|$. Hence, we also have
\begin{equation*}
	\|\partial^\alpha (e^{-\mathbf k\cdot \mathbf D\mathbf kt}P_0^{(0)}\chi_2(\mathbf k))\|_{L^2}\le Ce^{-ct}, \qquad\forall t\ge 1,\alpha\in\mathbb N^d.
\end{equation*}

Therefore, by the Carlson--Beurling inequality \eqref{eq:Carlson-Beurling Hs} in Lemma \ref{lem:Carlson-Beurling}, one obtains
\begin{equation}\label{eq:Lp-Lp estimate (2)}
	\|e^{-\mathbf k\cdot \mathbf D\mathbf kt}P_0^{(0)}\chi_2(\mathbf k)\|_{M_p}\le Ce^{-ct},\qquad \forall 1\le p\le \infty,t\ge 1.
\end{equation}

Finally, by the interpolation inequality, the estimates \eqref{eq:L^infty-L1 estimate (2)} and \eqref{eq:Lp-Lp estimate (2)}, we obtain \eqref{eq:Lp-Lq estimate for Phi_tchi(2)}. The proof is done.
\end{proof}

\subsection{High-frequency analysis}
The aim of this part is to give an $L^2$-$L^2$ estimate of the high-oscillation part of $\Gamma_t$, which is $\hat\Gamma_t\chi_3$ in the Fourier space, where $\hat \Gamma_t$ is given by \eqref{eq:Fourier transform of Gamma kernel}.
\begin{proposition}[High-frequency estimate]\label{prop:high-frequency estimates}
If the conditions $\mathsf A$, $\mathsf R$ and $\mathsf D$ hold, then there are constants $c>0$ and $C>0$ such that one has the estimate
\begin{equation*}
	\|\mathcal F^{-1}(\hat \Gamma_t(\mathbf k)\chi_3(\mathbf k))*u_0\|_{L^2}\le Ce^{-ct}\|u_0\|_{L^2},\qquad \forall t>0.
\end{equation*}
\end{proposition}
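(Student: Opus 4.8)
The plan is to reduce the statement to a single pointwise (a.e.\ in $\mathbf k$) bound on the matrix exponential $\hat\Gamma_t(\mathbf k)=e^{-E(i\mathbf k)t}$ on $\mathrm{supp}\,\chi_3$, and then to close by Plancherel. Indeed, if one proves $|\hat\Gamma_t(\mathbf k)\chi_3(\mathbf k)|\le Ce^{-ct}|\chi_3(\mathbf k)|$ for a.e.\ $\mathbf k$, then
\begin{equation*}
	\|\mathcal F^{-1}(\hat\Gamma_t(\mathbf k)\chi_3(\mathbf k))*u_0\|_{L^2}\le C\|\hat\Gamma_t\chi_3\hat u_0\|_{L^2}\le C\,\esssup_{\mathbb R^d}|\hat\Gamma_t(\mathbf k)\chi_3(\mathbf k)|\,\|u_0\|_{L^2}\le Ce^{-ct}\|u_0\|_{L^2},
\end{equation*}
which is the claim. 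So the entire content is a uniform exponential bound on $e^{-E(i\mathbf k)t}$ for $|\mathbf k|\ge\rho$, where $\rho$ (the cut-off radius) is at our disposal and will be taken large.

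I would take $\rho$ large enough that the high-frequency approximation of $E(i\mathbf k)$ holds on $\mathrm{supp}\,\chi_3$ off a Lebesgue-null set of directions (harmless, since only the essential supremum enters). By \eqref{eq:E(z) for high frequency}, the commutation of $\Upsilon_{jm}(i\mathbf k)$ with $\Pi_{jm}(i\mathbf k)$, and the fact that the $\Pi_{jm}(i\mathbf k)$ are complementary idempotents summing to $I$ (as in \eqref{eq:subprojections}), one gets
\begin{equation*}
	e^{-E(i\mathbf k)t}=R(\mathbf w)\Bigl(\sum_{j=1}^r\sum_{m=1}^{s_j}e^{-\Upsilon_{jm}(i\mathbf k)t}\,\Pi_{jm}(i\mathbf k)\Bigr)R^{-1}(\mathbf w),\qquad \mathbf w=\mathbf k/|\mathbf k|.
\end{equation*}
The decisive feature is that the only unbounded part of $\Upsilon_{jm}(i\mathbf k)$ is the scalar $\alpha_j(i\mathbf k)=i|\mathbf k|\nu_{[j]}(\mathbf w)\in i\mathbb R$, whose exponential has modulus $1$. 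Writing $\Upsilon_{jm}(i\mathbf k)=\alpha_j(i\mathbf k)I+\beta_{jm}I+(\Theta_{jm}^{(0)}+\mathcal O(|\mathbf k|^{-1}))$ and using that $\alpha_j(i\mathbf k)I$ and $\beta_{jm}I$ are scalars gives, for any induced norm,
\begin{equation*}
	|e^{-\Upsilon_{jm}(i\mathbf k)t}|=|e^{-\alpha_j(i\mathbf k)t}|\,e^{-\re\beta_{jm}\,t}\,|e^{-(\Theta_{jm}^{(0)}+\mathcal O(|\mathbf k|^{-1}))t}|\le e^{-\theta t}\,|e^{-(\Theta_{jm}^{(0)}+\mathcal O(|\mathbf k|^{-1}))t}|,
\end{equation*}
where $\re\beta_{jm}\ge\theta>0$ is exactly the consequence of $\mathsf D$ already established inside the proof of the high-frequency approximation.

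It then remains to bound $|e^{-(\Theta_{jm}^{(0)}+\mathcal O(|\mathbf k|^{-1}))t}|$ uniformly in $t>0$ and $|\mathbf k|\ge\rho$. Since $\Theta_{jm}^{(0)}$ is nilpotent, the Householder theorem (as already used in Proposition~\ref{prop:intermediate-frequency estimates} and Proposition~\ref{prop:low-frequency estimates}) furnishes an induced norm in which $|\Theta_{jm}^{(0)}|\le\theta/4$ for each of the finitely many pairs $(j,m)$; enlarging $\rho$ so that the remainder obeys $|\mathcal O(|\mathbf k|^{-1})|\le\theta/4$ on $\mathrm{supp}\,\chi_3$, submultiplicativity yields $|e^{-(\Theta_{jm}^{(0)}+\mathcal O(|\mathbf k|^{-1}))t}|\le e^{(\theta/2)t}$, hence $|e^{-\Upsilon_{jm}(i\mathbf k)t}|\le e^{-\theta t/2}$. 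Since $\mathsf A$ bounds $|R(\mathbf w)||R^{-1}(\mathbf w)|$ uniformly, $|\Pi_{jm}(i\mathbf k)|=|\Pi_{jm}^{(0)}+\mathcal O(|\mathbf k|^{-1})|$ is bounded on $\mathrm{supp}\,\chi_3$, and the sum over $(j,m)$ is finite, we obtain $|e^{-E(i\mathbf k)t}\chi_3(\mathbf k)|\le Ce^{-ct}|\chi_3(\mathbf k)|$ a.e., and Plancherel finishes the proof. The main obstacle here, and the reason the asymptotic expansions of Section~\ref{sec:Asymptotic expansions} are indispensable, is that $\|E(i\mathbf k)\|$ grows linearly in $|\mathbf k|$, so the crude bound $\esssup_{|\mathbf k|\ge\rho}|e^{-E(i\mathbf k)}|<1$ available in the intermediate-frequency regime is simply false; only once the purely imaginary scalar $\alpha_j(i\mathbf k)I$ is split off does the exponential become uniformly controllable, and one must still enlarge $\rho$ to absorb the $\mathcal O(|\mathbf k|^{-1})$ term (which multiplies $t$ in the exponent) into the decay.
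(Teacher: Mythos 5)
Your proposal is correct and follows essentially the same route as the paper: invoke the high-frequency approximation \eqref{eq:E(z) for high frequency}--\eqref{eq:the total projection Pijm0} to factor $e^{-E(i\mathbf k)t}$ through the commuting family $\Pi_{jm}$, peel off the purely imaginary scalar $\alpha_j(i\mathbf k)I$ and the dissipative scalar $\beta_{jm}I$, control the nilpotent $\Theta_{jm}^{(0)}$ via the Householder theorem in a suitable induced norm, absorb the $\mathcal O(|\mathbf k|^{-1})$ remainder by taking $\rho$ large, use condition $\mathsf A$ to bound $|R||R^{-1}|$, and conclude by Parseval. Your version only makes the paper's implicit constant-chasing (the $\varepsilon t + c'|\mathbf k|^{-1}t$ in the exponent) explicit by fixing both contributions at $\theta/4$; this is a cosmetic difference, not a different argument.
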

\begin{proof}
Under the assumptions $\mathsf A$, $\mathsf R$ and $\mathsf D$, for almost everywhere and large $\mathbf k\in\mathbb R^d$, from \eqref{eq:E(z) for high frequency} - \eqref{eq:the total projection Pijm0}, we have
\begin{equation*}
	\hat \Gamma_t(\mathbf k)\chi_3(\mathbf k)=R\sum_{j=1}^r\sum_{m=1}^{s_j}e^{-\alpha_j(i\mathbf k)t}e^{-\beta_{jm}t}e^{\Theta_{jm}^{(0)}t+\mathcal O(|\mathbf k|^{-1})t}(\Pi_{jm}^{(0)}+\mathcal O(|\mathbf k|^{-1}))R^{-1}\chi_3(\mathbf k),
\end{equation*}
where $R$ is the invertible matrix satisfying the conditions $\mathsf A$ and $\mathsf R$, $\alpha_j(i\mathbf k)=i|\mathbf k|\nu_{[j]}(\mathbf k/|\mathbf k|)$ for $\nu_{[j]}$ is given by \eqref{eq:eigenvalues of A}, $\beta_{jm}$ with $\re \beta_{jm}>0$ is the $m$-th nonzero eigenvalue of $\Pi_j^{(0)}R^{-1}BR\Pi_j^{(0)}$ with the associated eigenprojection $\Pi_{jm}^{(0)}$ and eigennilpotent $\Theta_{jm}^{(0)}$, where $\Pi_j^{(0)}$ is in \eqref{eq:the eigenprojection Pij0}.

Thus, by the Householder theorem in \cite{serre10}, for all $\varepsilon>0$, there is an induced norm such that $|\Theta_{jm}^{(0)}|\le \varepsilon$ and due to the fact that every norms in finite-dimensional space are equivalent, one deduces that since $|\mathbf k|$ large and $\re \beta_{jm}^{(0)}>0$ for all $j\in\{1,\dots,r\}$ and $m\in\{1,\dots,s_j\}$, there are constants $c,c'>0$ and $C>0$ such that
\begin{align*}
	|\hat \Gamma_t(\mathbf k)\chi_3(\mathbf k)|&\le C\sum_{j=1}^r\sum_{m=1}^{s_j}e^{-\re \beta_{jm}t}e^{\varepsilon t+c'|\mathbf k|^{-1}t}(1+|\mathbf k|^{-1})|\chi_3(\mathbf k)|\nonumber\\
	&\le Ce^{-ct}(1+|\mathbf k|^{-1})|\chi_3(\mathbf k)|.
\end{align*}

Therefore, by the Parseval identity, we have
\begin{align*}
	\|\mathcal F^{-1}(\hat \Gamma_t(\mathbf k)\chi_3(\mathbf k))*u_0\|_{L^2}&=\|\hat \Gamma_t(\mathbf k)\chi_3(\mathbf k)\hat u_0(\mathbf k)\|_{L^2}\nonumber\\
	&\le C\|\hat \Gamma_t(\mathbf k)\chi_3(\mathbf k)\|_{L^\infty}\|\hat u_0\|_{L^2}\nonumber\\
	&\le Ce^{-ct}\|(1+|\cdot|^{-1})\chi_3\|_{L^\infty}\|u_0\|_{L^2}\le Ce^{-ct}\|u_0\|_{L^2}
\end{align*}
for some constants $c,C>0$ and for all $t>0$. We finish the proof.
\end{proof}

Moreover, we have the $L^p$-$L^q$ estimate for $\mathcal F^{-1}(\hat \Phi_t(\mathbf k)\chi_3(\mathbf k))*u_0$ and $\mathcal F^{-1}(\hat \Psi_t(\mathbf k)\chi_3(\mathbf k))*u_0$ for $1\le q\le p\le \infty$ as the following.
\begin{proposition}\label{prop:high-frequency estimates 2}
If the conditions $\mathsf B$ and $\mathsf D$ hold, then for $1\le q\le p\le \infty$, there are constants $c>0$ and $C>0$ such that for $t\ge 1$, one has
\begin{equation}\label{eq:Lp-Lq estimate for Phi_tchi(3)}
	\|\mathcal F^{-1}(\hat \Phi_t(\mathbf k)\chi_3(\mathbf k))*u_0\|_{L^p}\le Ce^{-ct}\|u_0\|_{L^q}.
\end{equation}
Similarly, we have
\begin{equation}\label{eq:Lp-Lq estimate for Psi_tchi(3)}
	\|\mathcal F^{-1}(\hat \Psi_t(\mathbf k)\chi_3(\mathbf k))*u_0\|_{L^p}\le Ce^{-ct}\|u_0\|_{L^q}.
\end{equation}
\end{proposition}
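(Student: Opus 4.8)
The plan is to follow the argument of Proposition \ref{prop:intermediate-frequency estimates 2} almost verbatim, the only new feature being that $\chi_3$ is supported in the \emph{unbounded} region $\{|\mathbf k|\ge\rho\}$ rather than in a compact annulus. As before, it suffices to establish an $L^\infty$-$L^1$ bound and an $L^p$-$L^p$ bound for $1\le p\le\infty$, both carrying a factor $e^{-ct}$, and then to conclude by the complex interpolation inequality. I would treat $\hat\Phi_t\chi_3$ in detail, the case of $\hat\Psi_t\chi_3$ being identical after observing that the extra factor $P_0^{(0)}+\mathbf P_0^{(1)}\cdot i\mathbf k$ only introduces a polynomial factor $1+|\mathbf k|$, harmlessly absorbed by the Gaussian $e^{-\mathbf k\cdot\mathbf D\mathbf k t}$. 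As in the proof of Proposition \ref{prop:low-frequency estimates}, one may also assume $\mathbf c=\mathbf 0$ after the change of variables $(\mathbf x,t)\mapsto(\mathbf x-\mathbf c t,t)$, which does not affect $L^p$-norms.

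The decisive observation is a splitting of the Gaussian exponent. Since $\mathbf D$ is positive definite, there is $c'>0$ with $\mathbf k\cdot\mathbf D\mathbf k\ge c'|\mathbf k|^2$, and on $\{|\mathbf k|\ge\rho\}$ one has, for $t\ge 1$,
\[
	e^{-c'|\mathbf k|^2 t}\le e^{-\frac{c'\rho^2}{4}t}\,e^{-\frac{3c'}{4}|\mathbf k|^2 t},
\]
so every occurrence of the Gaussian produces an exponential-in-$t$ gain together with a Gaussian of comparable size that keeps all the relevant integrals finite (indeed decaying in $t$). Using this, the $L^\infty$-$L^1$ bound follows from Young's inequality together with $\|\mathcal F^{-1}(\hat\Phi_t\chi_3)\|_{L^\infty}\le C\|\hat\Phi_t\chi_3\|_{L^1}\le Ce^{-ct}\int_{\mathbb R^d}e^{-\frac{c'}{2}|\mathbf k|^2 t}\,\dif\mathbf k\le Ce^{-ct}$ for $t\ge 1$.

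For the $L^p$-$L^p$ bound I would invoke Lemma \ref{lem:Carlson-Beurling}, which reduces matters to bounding $\|\hat\Phi_t\chi_3\|_{L^2}$ and $\|\partial^\alpha(\hat\Phi_t\chi_3)\|_{L^2}$ for $|\alpha|=s$ with $s>d/2$ a fixed integer. Applying the Leibniz rule and the pointwise derivative estimate \eqref{eq:derivatives estimate parabolic previous} (valid for all $\mathbf k\in\mathbb R^d$ by Remark \ref{rem:derivatives estimate parabolic previous}), each resulting term is controlled by $|\mathbf k|^m t^{m+\ell}e^{-c'|\mathbf k|^2 t}$ (with $m+2\ell=|\nu|$) multiplied by a derivative of $\chi_3$; when all derivatives fall on the Gaussian the cut-off $\chi_3$ confines $\mathbf k$ to $\{|\mathbf k|\ge\rho\}$, and when at least one derivative falls on $\chi_3$ the factor $\partial^{\alpha-\nu}\chi_3$ is bounded with support in a compact annulus. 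In either case the splitting above converts the (at most polynomial in $t$) bound into $Ce^{-ct}$ after integration, so $\|\partial^\alpha(\hat\Phi_t\chi_3)\|_{L^2}\le Ce^{-ct}$ for $t\ge 1$, and likewise $\|\hat\Phi_t\chi_3\|_{L^2}\le Ce^{-ct}$. Hence \eqref{eq:Carlson-Beurling Hs} yields $\|\hat\Phi_t\chi_3\|_{M_p}\le C(e^{-ct})^{1-\frac{d}{2s}}(e^{-ct})^{\frac{d}{2s}}=Ce^{-ct}$, i.e. $\|\mathcal F^{-1}(\hat\Phi_t\chi_3)*u_0\|_{L^p}\le Ce^{-ct}\|u_0\|_{L^p}$ for all $1\le p\le\infty$. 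Interpolating this with the $L^\infty$-$L^1$ estimate gives \eqref{eq:Lp-Lq estimate for Phi_tchi(3)} for every $1\le q\le p\le\infty$, and the same argument produces \eqref{eq:Lp-Lq estimate for Psi_tchi(3)}.

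I do not expect a genuine conceptual obstacle here — only bookkeeping needed to accommodate the unbounded support of $\chi_3$. The single point requiring care is to verify that the polynomial-in-$t$ prefactors arising both from differentiating the Gaussian and, for $\hat\Psi_t$, from the linear factor $\mathbf P_0^{(1)}\cdot i\mathbf k$ are genuinely dominated by the exponential gain $e^{-c'\rho^2 t/4}$ uniformly for $t\ge 1$, which is precisely what the three-way splitting of the exponent guarantees.
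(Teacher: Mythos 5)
Your proposal is correct and follows essentially the same route as the paper, which disposes of this proposition in one line by saying the argument is identical to Proposition~\ref{prop:intermediate-frequency estimates 2} with $\chi_2$ replaced by $\chi_3$. You have spelled out exactly what that reference means — extracting the factor $e^{-ct}$ from the Gaussian via the lower bound $|\mathbf k|\ge\rho$ on $\mathrm{supp}\,\chi_3$, then running Young, Carlson--Beurling, and interpolation — and you correctly flag and resolve the one point the paper leaves implicit, namely that the unbounded support of $\chi_3$ is harmless because the Gaussian dominates all polynomial factors (including the extra $1+|\mathbf k|$ from $\hat\Psi_t$) and the derivatives of $\chi_3$ live in a compact annulus.
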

\begin{proof}
Similarly to the proof of Proposition \ref{prop:intermediate-frequency estimates 2} where $\chi_2$ is substituted by $\chi_3$.
\end{proof}
\section*{Acknowledgments}
The author is grateful to Professor Corrado Mascia for his suggestion and useful comments to improve this paper.
%------------References----------------
\bibliographystyle{elsarticle-num-sort}
\bibliography{Refinedversion}
\end{document}